\colorlet{darkgreen}{green!50!black}
\def\Re{{\textnormal{Re}}}
\def\R{{\mathbb R}}
\def\C{{\mathbb C}}
\def\Z{{\mathbb Z}}
\def\1{{1\!\!\!1}}
\def\E{{\mathbb E}}
\def\P{{\mathbb P}}
\def\argmax{{\textnormal{argmax}}}
\def\supp{{\rm{supp}}}
\newcommand{\fc}{\mathds{1}}
\let \phi=\varphi
\newcommand{\be}{\begin{equation}}
\newcommand{\ee}{\end{equation}}
\numberwithin{equation}{section}
\newtheorem{theorem}{Theorem}
\newtheorem{prop}{Proposition}[section]
\newtheorem{cor}[prop]{Corollary}
\newtheorem{defi}[prop]{Definition}
\newtheorem{lemma}[prop]{Lemma}
\newtheorem{rem}[prop]{Remark}
\newtheorem{notation}{Notations}
\newcommand{\nb}[1]{#1}
\newcommand{\nr}[1]{}
\newcommand{\ngray}[1]{}
\thanks{This project received funding from ENS Rennes, and from the ANR RESYST (ANR-22-CE40-0002)}
\begin{document}

\title[Martin boundary of a degenerate RBM in a wedge]{Martin boundary of a degenerate Reflected Brownian Motion in a wedge}

%
%
%

\author{Maxence Petit }
\address{Sorbonne Universite, Laboratoire de Probabilités, Statistiques et Modélisation, 
UMR 8001, 4 place Jussieu, 75005 Paris, France. }
\email{maxence.petit@ens-rennes.fr}


\begin{abstract}
We consider an outward degenerate drifted Brownian motion in the quarter plane with oblique reflections on the boundaries. In this article, we explicitly compute the Laplace transforms of the Green’s functions associated with the process. These Laplace transforms are expressed as an infinite sum of products \nr{using the compensation method}\nb{by iterating a functional equation, which is deeply linked to the compensation method}. We also derive the asymptotics of the Green’s functions along all possible paths and determine the (minimal) Martin boundary. Finally, we provide explicit formulae for all the corresponding positive harmonic functions.
\end{abstract}

\maketitle

\section{Introduction and main results}

\subsection*{Context}
The semi-martingale reflecting Brownian motion (SRBM) in two-dimensional convex cones is a classical topic in probability theory. Problems such as existence and uniqueness~\cite{HaRe-81b,taylor_existence_1993},
recurrence and transience conditions \cite{hobson_recurrence_1993, williams_recurrence_1985}, study of stationary distribution properties \cite{harrison_multidimensional_1987, dieker_reflected_2009, dai_reflecting_2011, franceschi_asymptotic_2016}, and many
others have been extensively studied in the literature, mostly under the assumption of a non-degenerate covariance matrix.  

An important problem in transient SRBM is the analysis of Green's functions, which can be divided into two
parts:
\begin{itemize}
\item[$(P_1)$]\label{Q1}  Obtaining the Laplace transforms of the Green's functions,
\item[$(P_2)$] Computing the asymptotics of the Green's functions along all trajectories of the SRBM.
\end{itemize}
Solutions to $(P_1)$ in the half-plane 
can be expressed directly in terms of a rational function of two variables $(x, Y(x))$, where $Y(x)$ is a branch of a certain two-valued algebraic function, as detailed in~\cite{ernst_franceschi_asymptotic_2021}. However, solving $(P_1)$ \nr{in cones}\nb{in a general cone} 
presents a significantly greater challenge. Specifically, for non-degenerate SRBM in the quarter plane with three domains
, the Laplace transforms are obtained as singular integral representations via boundary-value problems, as shown in~\cite{franceschi_green_2021, franceschi_explicit_2017}. Although these expressions are explicit, they are not particularly amenable to in-depth analysis. Fortunately, they are not required to resolve the second issue $(P_2)$. In fact, only the locations of the dominant singularities of unknown Laplace transforms are necessary to compute the asymptotics of the Green's functions. Problem $(P_2)$ for non-degenerate SRBM has been solved in the half-plane in~\cite{ernst_franceschi_asymptotic_2021} and in an arbitrary wedge in~\cite{franceschi2023asymptotics}. The approach followed in these articles has been developed in ~\cite{FIM17,kurkova_martin_1998, Kurkova2011, dai_reflecting_2011, franceschi_asymptotic_2016,malyshev_asymptotic_1973}, and can be considered as a version of the so-called kernel method. For more information, see the survey by Y.Q. Zhao~\cite{Zhao2022}. The kernel $\gamma(x,y)$ of the SRBM is given by one half of the quadratic form of the covariance matrix, plus the linear form of the drift inside the cone. The interplay between the branches of algebraic functions $X(y)$ and $Y(x)$, defined by the kernel equation $\gamma(x,y)=0$, allows us to analytically continue unknown Laplace transforms and to determine their singularities. The inverse Laplace transforms, combined with the saddle-point method, then yield asymptotic expansions for the Green’s functions. This procedure provides asymptotic developments of Green's functions with arbitrarily many terms, but with unknown multiplicative constants. These constants may be derived -- albeit somewhat indirectly -- from the solutions to $(P_1).$

The degenerate SRBM in two-dimensional cones, i.e. with a covariance matrix of rank one, has been studied far less extensively. In~\cite{ichiba2021, franceschi2024invariant} it arises as the gap process between three particles moving and colliding in $\R^1$. The construction of this three-particle process relies on the Skorokhod reflection approach, as developed in~\cite{HaRe-81b}, to define pathwise reflected Brownian motion.

In the present article we consider a class of degenerate transient SRBMs in the quadrant, defined by conditions \eqref{donnees}-\eqref{vectors} and solve both problems $(P_1)$ and $(P_2)$. The Laplace transforms of the Green’s functions are expressed in terms of infinite series in product form. This result follows from the compensation method, initially introduced in~\cite{Adan} to obtain the stationary measure for certain degenerate random walks in a quadrant. This approach has since been successfully applied to queueing systems~\cite{Adan1991,adan1994compensation}. It has also been used to derive generating functions for random walks with small steps~\cite{ADAN_2013}, and more recently, to determine the \nr{stationary distributions of recurrent random walks}\nb{harmonic functions of singular walks in the quadrant}~\cite{hoang2022}. In~\cite{franceschi2024invariant}, for instance, it was used to derive the explicit form of the stationary distribution, and in~\cite{franceschi2024martin}, to determine the Martin boundary of killed degenerate Brownian motion in a two-dimensional cone.

In this article, we compute the asymptotics of the Green’s functions along all trajectories. To achieve this, we adapt the approach described earlier and developed in~\cite{FIM17, kurkova_martin_1998, dai_reflecting_2011, franceschi_asymptotic_2016,ernst_franceschi_asymptotic_2021, franceschi2023asymptotics} to this class of degenerate SRBMs. A key difference is that, unlike the non-degenerate case -- in which the kernel equation for the process defines an ellipse -- the kernel equation for the degenerate case defines a parabola in $\R^2$. 
The multiplicative constants in the asymptotic expressions of the Green’s functions, derived from the solution to $(P_1)$, are made explicit in terms of infinite series in product form. The significance of these constants -- viewed as functions of the starting point of the process -- extends beyond asymptotic precision: they also yield all positive harmonic functions for the DRBM via the Martin boundary theory.

Initiated by Martin~\cite{Martin1941}, and further developed by Hunt~\cite{Hunt_1957, Hunt_1957_2, Hunt_1957_3}, Doob~\cite{Doob_1959}, Kunita and
Watanabe~\cite{kunitaWatanabe1965}, this theory is summarised in~\cite{Chung2005MarkovPB, Doob1984ClassicalPT}. Its aim is to describe the asymptotic behavior of the process and to characterize all non-negative superharmonic and harmonic functions. The limits of the Martin kernel along the trajectories of the process, when they exist, compactify the state space and form the so-called Martin boundary. This procedure allows every non-negative harmonic function to be expressed as an integral representation over the Martin boundary. 
In~\cite{Ney1966, Ignatiouk_Robert_2009, Ignatiouk_Robert_2010,t_Ignatiouk_Robert_2010, duraj2020martin}, the Martin boundary is identified via large deviation principles. 
It has also been obtained from the asymptotics of Green’s functions in~\cite{franceschi2023asymptotics, ernst_franceschi_asymptotic_2021, kurkova_martin_1998,Kurkova2011}. 
In this article, using the solutions to problems $(P_1)$ and $(P_2)$ described above, we determine the Martin boundary, the minimal one and provide explicit expressions for all\nr{ Martin} positive harmonic functions.


\color{black}

\subsection*{Main results}
\subsubsection*{The degenerate reflected Brownian, assumptions.} We consider a degenerate Brownian motion $(Z_t)_{t\geq0}$ in a quadrant, with oblique reflection at the boundaries. By \textit{degenerate} we mean that the covariance matrix is of rank $1$. This obliquely reflected process was studied in \cite{ichiba2021} and its rigorous definition 
is provided in Section \ref{sec:2}. The parameters of the degenerate reflected Brownian motion are given by:
\begin{equation}\label{donnees}
\Sigma = \begin{pmatrix}
\sigma_1^2 & -\sigma_1\sigma_2\\
-\sigma_1\sigma_2 & \sigma_2^2
\end{pmatrix},
\mu = \begin{pmatrix}
\mu_1  \\
\mu_2
\end{pmatrix},
R = \begin{pmatrix}
1 & r_{2} \\
r_{1} & 1
\end{pmatrix} = \begin{pmatrix}
R_1 & R_{2} 
\end{pmatrix}
\end{equation}
where $\Sigma$ is the degenerate covariance matrix ($\det(\Sigma) = 0$), $\mu$ is the drift and columns of $R$ represent the reflection directions from the axes. The direction $v = (v_1, v_2)^T = ({\sigma_1}, -{\sigma_2})$ is antidiagonal, i.e. $v_1v_2 < 0$ (see Figure~\ref{rebonds}). When the process does not hit the boundaries, it behaves like a one-dimensional Brownian motion along the direction $v$ (plus the drift). 
Our main assumptions in this article are as follows:
\begin{equation}\label{drift}
\mu_1 > 0,\quad \mu_2 > 0,
\end{equation}
\begin{equation}\label{vectors}
r_{1} > -{\frac{\sigma_2}{\sigma_1}},\quad r_{2} > -{\frac{\sigma_1}{\sigma_2}}.
\end{equation}
Assumption~\eqref{drift} ensures that the process is transient, whereas~\eqref{vectors} specifies that the reflection vectors $R_1 = (1, r_{1})^T$ (on $\{x = 0\}$) and $R_2 = (1, r_{2})^T$ (on $\{y = 0\}$)  point outward from the direction $v$ of the Brownian motion (see Figure~\ref{rebonds}). 

\begin{figure}[hbtp]
\centering
\includegraphics[scale=1.3]{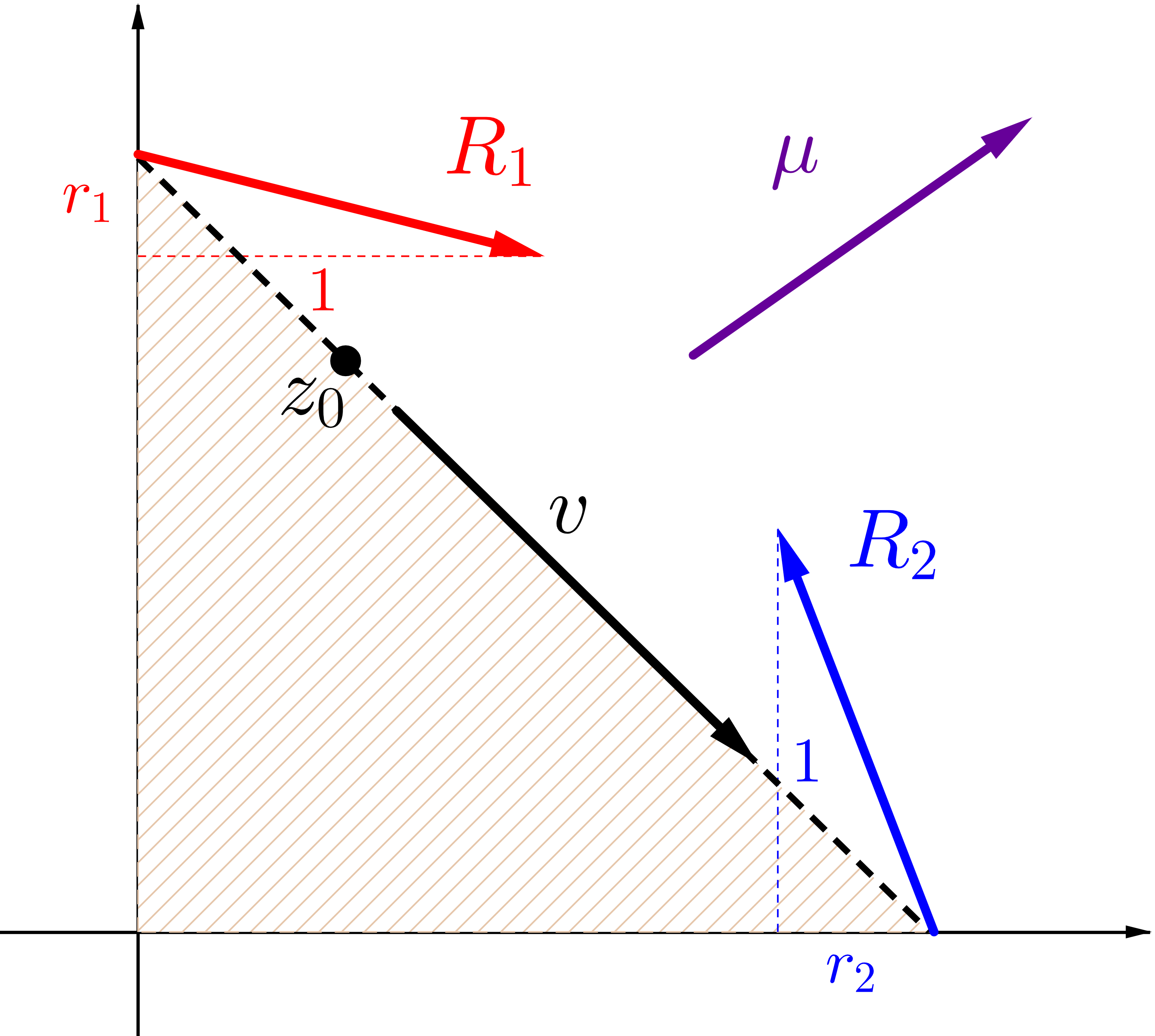}
\caption{Reflections $R_1, R_2$ on the edges, the drift $\mu$, and the direction $v$ of the degenerate Brownian motion. The process starting from $z_0$ never reaches the hatched region.}
\label{rebonds}
\end{figure}

\begin{figure}[hbtp]
\centering
\includegraphics[scale=0.7]{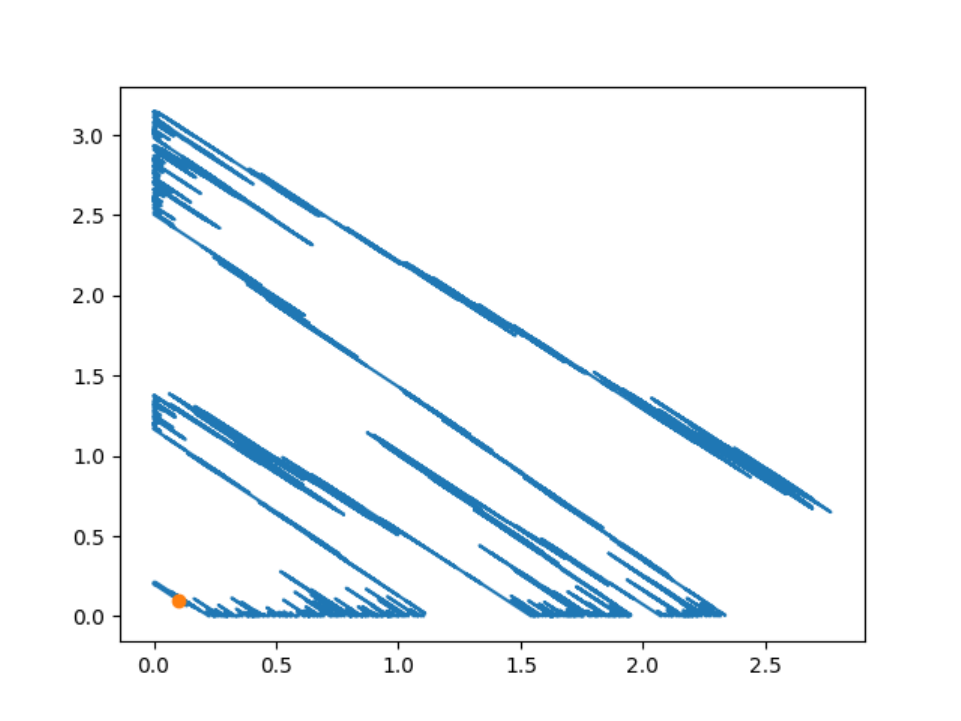}
\vspace{-0.5cm}
\caption{Example of a typical path (over a finite time horizon) of the drifted degenerate Brownian motion. The initial point is marked in orange.}
\label{exemple}
\end{figure}

In Sections 1--8  we state and prove our results under the additional assumption
\begin{equation}\label{simple}  \sigma_1 = \sigma_2 = 1, \quad \mu_ 1 + \mu_2 = 1.
\end{equation}
Results for the general case, i.e. without Assumption~\eqref{simple}, are stated and proved in Section~\ref{App:A}.  In fact, they are easily deduced from the results under~\eqref{simple} by means of a simple space-time transformation. 

\subsubsection*{Green's functions}
We show that for any starting point $z_0\in\R_+^2$, there exists a density $g^{z_0}(\cdot)$ of the Green's measure $G(z_0, \cdot)$ on the quadrant defined as
\begin{equation}\label{mes_green}
G(z_0, A) := \int_0^{+\infty}\P_{z_0}(Z_t \in A)dt = \nr{\int_{\R_+^2}g^{z_0}(z)dz = } \nb{\int_{A}g^{z_0}(z)dz}.
\end{equation}
Functions $g^{z_0}(\cdot)$ are called the Green's functions. We also define the Green's measures on the sides of the wedge
\begin{equation}\label{mes_bord}
H_i(z_0,A) := \E_{z_0}\left[\int_0^\infty\fc_A(Z_t)dL^i_t
\right], \quad i=1, 2
\end{equation}
where $(L^1_t)_{t\geq0}$ (resp. $(L^2_t)_{t\geq0}$) is the local time of the process on the axis $\{x = 0\}$ (resp. $\{y=0\}$).
The measure $H_1$ has its support on the vertical axis and $H_2$ has its support on the horizontal axis. Laplace transforms $\phi(x,y)$ of $G(z_0, \cdot)$ and $\phi_1(y), \phi_2(x)$ of $H_1(z_0,\cdot), H_2(z_0,\cdot)$ are related by the following functional equation  
\begin{equation}\label{Equation_fonctionelle_intro}
    - \gamma(x, y)\varphi(x, y) = \gamma_1(x, y)\varphi_1(y) + \gamma_2(x, y)\varphi_2(x) + e^{(x, y)\cdot z_0}, \  \  \  {Re}(x)<0, {Re}(y)<0
\end{equation}
where
\begin{equation}\label{gamma}
\gamma(x,y) = \frac{1}{2}(x-y)^2 +\mu_1x + \mu_2 y
\end{equation}
and
\begin{equation}\label{gamma12}\gamma_1(x, y) = R^1\cdot (x, y) = x + r_{1}y, \quad \quad \gamma_2(x, y) = R^2\cdot (x, y) = r_{2}x + y.
\end{equation}
It can be viewed as a balance equation for Green's measures between the interior and the edges of the quadrant.
Let us define
\begin{equation}\label{paraboleP}
 \mathcal{P} = \{(x, y) \in \R^2,\ \ \gamma(x, y) = 0\}.
\end{equation}
The functional equation \eqref{Equation fonctionnelle} is similar to that in \cite{franceschi2023asymptotics}; however, an important difference is that $\mathcal{P}$ is now a parabola rather than an ellipse. This distinction is what allows the compensation method to be effective, leading to explicit expressions for the Laplace transforms and positive harmonic functions.

\subsubsection*{Explicit expressions for Laplace transforms}
The first results of the article provide explicit expressions for Laplace transforms $\phi_1$ and $\phi_2$ in terms of infinite series of product forms, given by formulae \eqref{phi1decroit} and \eqref{phi2decroit} which we do not specify here.  Function $\phi$ is derived from $\phi_1$ and $\phi_2$ via the functional equation~\eqref{Equation_fonctionelle_intro}.

\subsubsection*{Asymptotics of Green's functions}
We now focus on the asymptotics of $g^{z_0}(r\cos(\alpha), r\sin(\alpha))$ as $r\to+\infty$ and $\alpha \to \alpha_0 \in [0, \pi/2]$. For any direction $\alpha$, we denote by $(x(\alpha), y(\alpha))$ a corresponding point on the parabola given by \begin{equation}
(x(\alpha), y(\alpha)) = \argmax_{(x, y) \in \mathcal{P}} \left(\cos(\alpha)x + \sin(\alpha)y\right),
\end{equation}
see Figure \ref{colcol}. It can be computed explicitly as: 
\begin{equation}
(x(\alpha), y(\alpha)) = \left(\frac{(\mu_2 - \tan(\alpha)\mu_1)(\mu_2 + \tan(\alpha))(1+\mu_2)}{2(1+\tan(\alpha))^2}, \frac{(\mu_2 - \tan(\alpha)\mu_1)(1 + \mu_1(1+ \tan(\alpha)))}{2(1+\tan(\alpha))^2}\right).
\end{equation}
Let us define two particular directions
\begin{equation}\label{alpha*}
\alpha^* := \left\{
    \begin{array}{ll}
0  \quad &\textnormal{if}\quad (r_{1} + 1)\mu_2 \leq 2 \\
      \arctan\left(\frac{(1+r_{1})\mu_2 - 2}{2 + (1+r_{1})\mu_1} \right) &\textnormal{if}\quad (r_{1} + 1)\mu_2 > 2.
    \end{array}
\right.
\end{equation}
\begin{equation}\label{alpha**}
 \alpha^{**}:= \left\{
    \begin{array}{ll}
\arctan\left(\frac{(1+r_{2})\mu_2 + 2}{(1+r_{2})\mu_1 - 2}\right) \quad &\textnormal{if}\quad (r_{2} + 1)\mu_1 > 2 \\
      \pi/2 &\textnormal{if}\quad (r_{2} + 1)\mu_1 \leq 2,
    \end{array}
\right.
\end{equation}
see Figure \ref{alphaalpha} for their geometric interpretation. 
We always have  $\alpha^* < \alpha^{**}$ as will be proved in Section \ref{sec:6}.

In the following theorem we summarize  the asymptotics of Green's functions for directions $\alpha_0 \in (0, \pi/2)\backslash\{\alpha^*, \alpha^{**}\}$.  The ones for $\alpha_0 \in \{0,\alpha^*, \alpha^{**}, \pi/2\}$ are given later in Theorems~\ref{alpha=0}, \ref{directions_poles} and \ref{thm:7}.

\begin{figure}[htbp]
    \centering\hspace{-1cm}
    \begin{subfigure}{0.5\textwidth}
\centering
        \includegraphics[scale = 3]{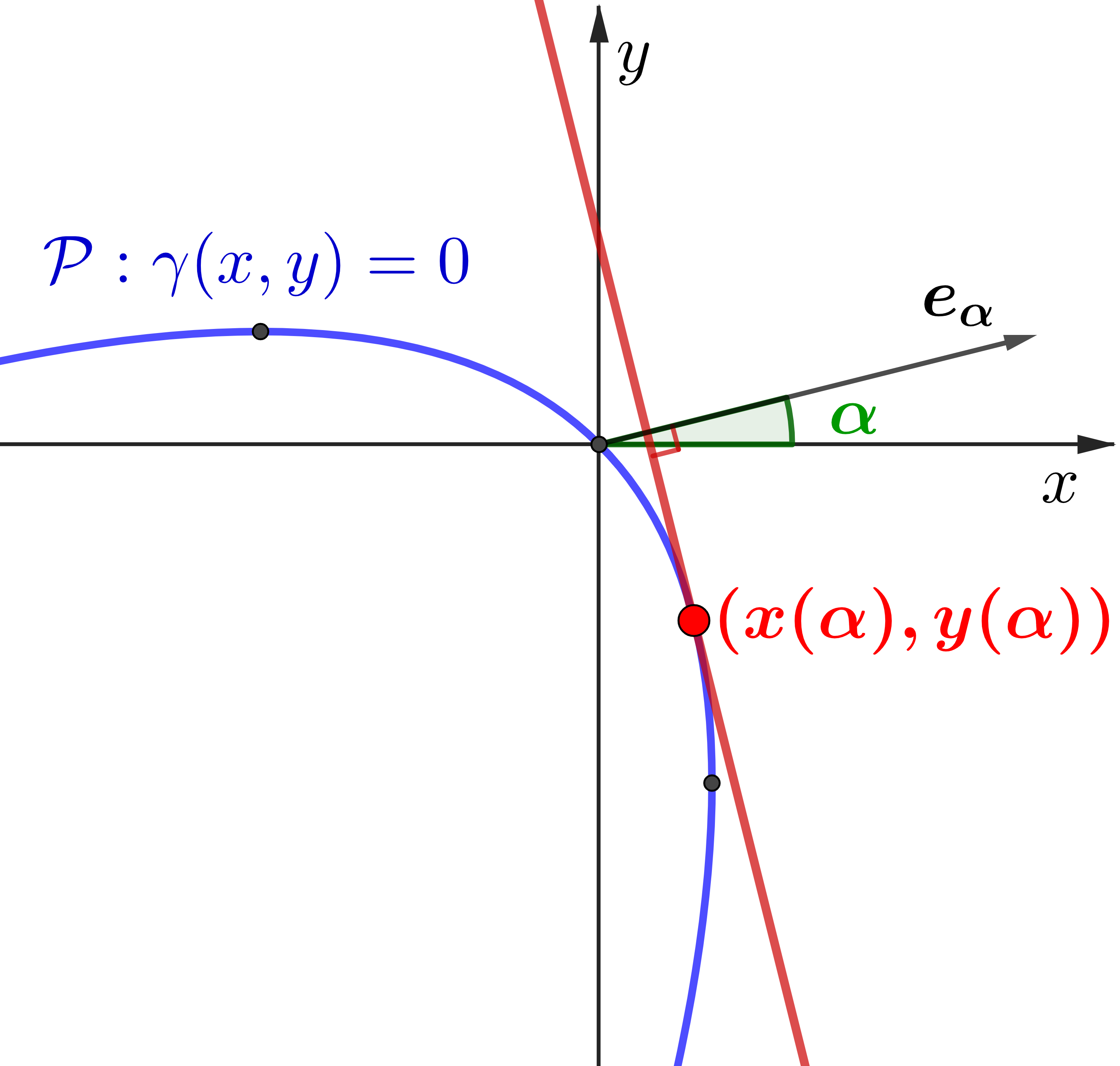}
        \caption{The point $(x(\alpha), y(\alpha))$ maximises the scalar product $\langle (x,y), e_\alpha\rangle$ where $e_\alpha = (\cos(\alpha), \sin(\alpha))$ and $(x,y)$ belongs to the parabola $\mathcal{P}$.}
        \label{colcol}
    \end{subfigure}
    \hfill
    \begin{subfigure}{0.45\textwidth}
\centering\hspace{-1cm}
        \includegraphics[scale = 2.7]{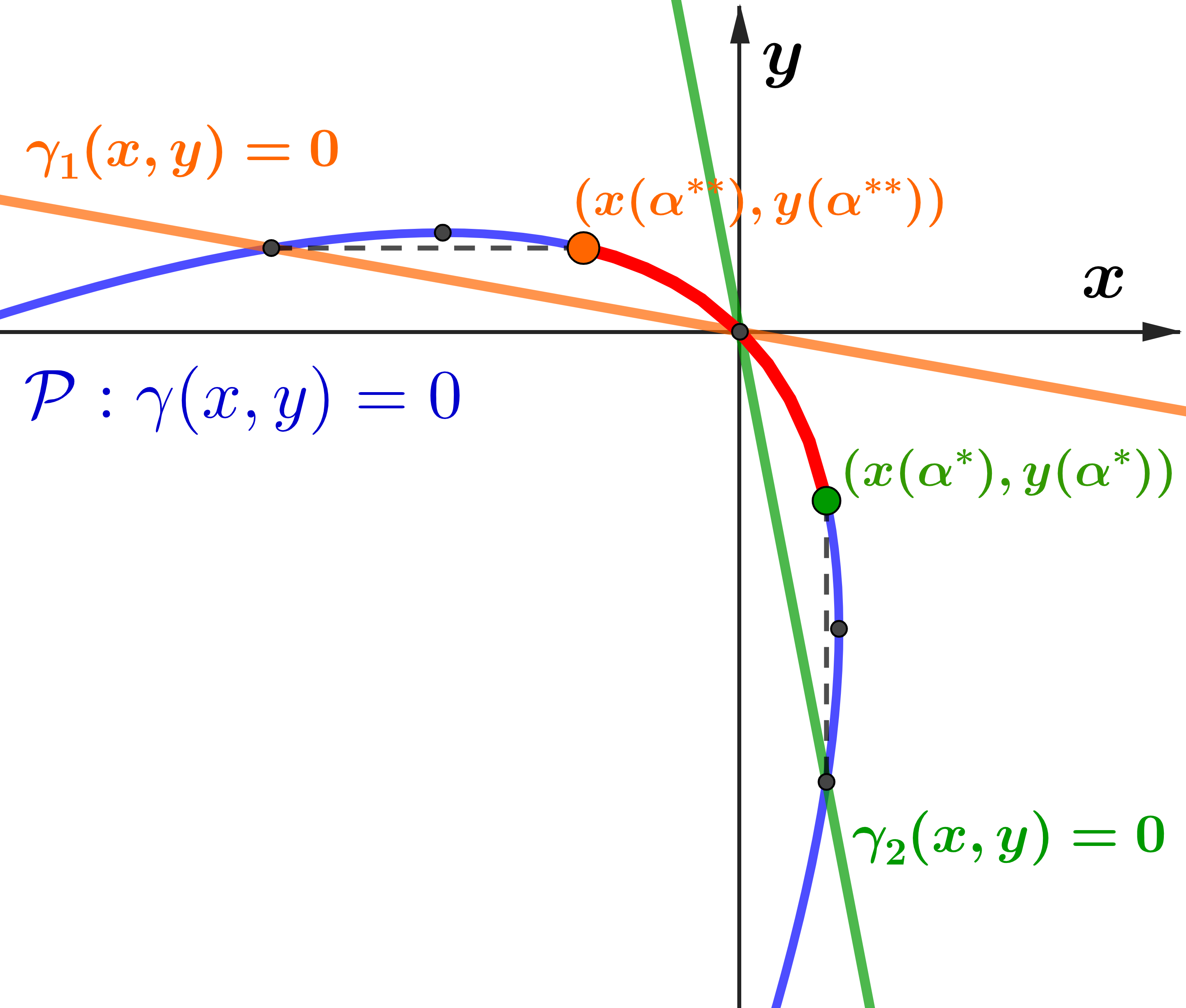}
        \caption{In the case $0 < \alpha^* < \alpha^{**} < \pi/2$, angles $\alpha^*$ and $\alpha^{**}$ introduced in \eqref{alpha*}, \eqref{alpha**} can be defined equivalently using this construction.}
        \label{alphaalpha}
    \end{subfigure}
    \label{mmm}
    \caption{Geometric interpretation of $(x(\alpha), y(\alpha))$, $\alpha^*$ and $\alpha^{**}$.}
\end{figure}

\begin{theorem}[Asymptotics in the quadrant, general case]
Assume (\ref{drift}) to (\ref{simple}). Then, the Green's density function $g^{z_0}$ of this process has the following asymptotics as $\alpha \to \alpha_0$ and $r\to\infty$.
\begin{itemize}
\item If $\alpha^* < \alpha_0 < \alpha^{**}$, then
\begin{equation}\label{cas_1}
g^{z_0}(r\cos(\alpha), r\sin(\alpha))
\underset{r\to\infty \atop \alpha \to \alpha_0}{\sim} 
 {c_{\alpha_0}}h_{\alpha_0}(z_0)\frac{e^{-r(\cos(\alpha)x(\alpha) + \sin(\alpha)y(\alpha))}}{\sqrt{r}}.
  \end{equation}

  \item If $\alpha_0 < \alpha^*$, then
\begin{equation}\label{cas_2}
g^{z_0}(r\cos(\alpha), r\sin(\alpha))
\underset{r\to\infty \atop \alpha \to \alpha_0}{\sim} 
c^*h_{\alpha^*}(z_0)e^{-r(\cos(\alpha)x(\alpha^*) + \sin(\alpha)y(\alpha^*))}.
  \end{equation}

  \item If $\alpha_0 > \alpha^{**}$, then
\begin{equation}\label{cas_3}
g^{z_0}(r\cos(\alpha), r\sin(\alpha))
\underset{r\to\infty \atop \alpha \to \alpha_0}{\sim} 
c^{**}h_{\alpha^{**}}(z_0) e^{-r(\cos(\alpha)x(\alpha^{**}) + \sin(\alpha)y(\alpha^{**}))}.
  \end{equation}
\end{itemize}
where ${c_{\alpha_0} = \frac{1}{\sqrt{2\pi(\cos(\alpha_0) + \sin(\alpha_0))}}}$, $c^*$ and $c^{**}$ are positive explicit constants only depending on the parameters of the degenerate reflected Brownian motion (see \eqref{eq:c*}) and where $h_{\alpha}(z_0), h_{\alpha^*}(z_0), h_{\alpha^{**}}(z_0)$ are harmonic functions given in Theorem \ref{thm:main}. Furthermore, $h_{\alpha}(z_0), h_{\alpha^*}(z_0), h_{\alpha^{**}}(z_0)$ are non-zero.
\label{thm1}
\end{theorem}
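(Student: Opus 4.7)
The strategy is the standard ``kernel method $+$ saddle point'' scheme used in \cite{franceschi2023asymptotics, ernst_franceschi_asymptotic_2021, kurkova_martin_1998}, adapted to the parabolic kernel $\gamma$ defined in~\eqref{gamma}. Starting from the double Bromwich inversion formula
\begin{equation*}
g^{z_0}(u,v) = \frac{1}{(2\pi i)^2}\int_{c_1-i\infty}^{c_1+i\infty}\int_{c_2-i\infty}^{c_2+i\infty} \phi(x,y)\, e^{-xu-yv}\, dx\, dy,
\end{equation*}
with $c_1,c_2>0$, I would plug in the functional equation~\eqref{Equation_fonctionelle_intro} to rewrite
\begin{equation*}
\phi(x,y) = -\frac{\gamma_1(x,y)\phi_1(y) + \gamma_2(x,y)\phi_2(x) + e^{(x,y)\cdot z_0}}{\gamma(x,y)},
\end{equation*}
and pass to polar coordinates $(u,v) = (r\cos\alpha, r\sin\alpha)$. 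The large parameter $r$ organizes the integrand into $e^{r F(x,y,\alpha)}$ with $F(x,y,\alpha)=-(x\cos\alpha+y\sin\alpha)$, so the geometry of the problem is dictated by the level sets of $F$ on $\mathcal P$. By the very definition of $(x(\alpha), y(\alpha))$ as the argmax of $\cos(\alpha)x + \sin(\alpha)y$ on $\mathcal{P}$, this is the unique saddle point of $F$ along $\mathcal P$; it provides the candidate exponential rate for~\eqref{cas_1}.

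Next I would perform a \emph{contour shift} in the double integral, pushing each contour up to the real hyperplane through the saddle, collecting residues at the poles of $\phi_1$ and $\phi_2$ that are crossed. The explicit product-series expressions for $\phi_1,\phi_2$ provided by the compensation method (Section on $(P_1)$) display a discrete sequence of poles $\{y_n\}$ and $\{x_n\}$; I would verify that the smallest ones relevant to the asymptotic regime are exactly $y(\alpha^*)$ and $x(\alpha^{**})$, which is precisely the content of the geometric definition \eqref{alpha*}-\eqref{alpha**} (the pictures in Figure~\ref{alphaalpha} describe these as the angles for which the saddle point of $\mathcal P$ hits the first pole of $\phi_1$ or $\phi_2$). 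The comparison of exponential rates between the saddle contribution $e^{-r(\cos(\alpha)x(\alpha)+\sin(\alpha)y(\alpha))}$ and the pole contributions $e^{-r(\cos(\alpha)x(\alpha^*)+\sin(\alpha)y(\alpha^*))}$ or $e^{-r(\cos(\alpha)x(\alpha^{**})+\sin(\alpha)y(\alpha^{**}))}$ then splits the plane into the three regimes of the theorem: in the middle range, the saddle is ``still behind'' both poles, and the integrand is regular along the shifted contours; in the outer ranges, one pole has been crossed and dominates.

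After the contour has been moved, the remaining integral along the critical contour is evaluated by the standard \emph{saddle-point method} for $\alpha^*<\alpha_0<\alpha^{**}$, producing the Gaussian factor $1/\sqrt{r}$ and the universal constant $c_{\alpha_0} = \bigl(2\pi(\cos\alpha_0+\sin\alpha_0)\bigr)^{-1/2}$ coming from the curvature of $\mathcal{P}$ at $(x(\alpha_0),y(\alpha_0))$ (which is why the parabolic geometry, unlike an ellipse, yields a particularly clean curvature computation). The residue at $y(\alpha^*)$ (resp.\ $x(\alpha^{**})$) gives the pure exponential behaviours~\eqref{cas_2}-\eqref{cas_3} with constants $c^*,c^{**}$ read off from the series representation of $\phi_1,\phi_2$. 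In every case the prefactor decouples as a product of the $z_0$-independent constant times a $z_0$-dependent factor, and this factor is by construction the value of the corresponding Laplace transform (or its residue) at the critical singularity; matching with the Martin-boundary construction of Theorem~\ref{thm:main} identifies it with the harmonic function $h_{\alpha_0}$, $h_{\alpha^*}$, or $h_{\alpha^{**}}$. Non-nullity of these functions follows from the explicit product-form series being strictly positive for $z_0\in\R_+^2$.

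\textbf{Main obstacle.} The delicate part is controlling the \emph{convergence of the contour shift} globally: one must show that the infinite product-form series for $\phi_1,\phi_2$ admit meromorphic continuations to a strip large enough to contain the saddle, locate \emph{all} poles crossed during the shift (not only the dominant ones), and prove uniform decay along the vertical tails so that the contribution along $c_j \pm i\infty$ after shifting is negligible. A secondary technical point is the uniformity in $\alpha\to\alpha_0$ of the saddle-point expansion near the critical angles $\alpha^*$ and $\alpha^{**}$ (where a pole merges with the saddle), which must be handled carefully enough so that the three regimes of the theorem glue consistently at their boundaries---though the transitional regimes themselves are postponed to Theorems~\ref{alpha=0}, \ref{directions_poles}, \ref{thm:7}.
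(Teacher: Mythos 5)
Your overall plan is the paper's: Laplace inversion, substitution of the functional equation, contour deformation to a steepest-descent path through the saddle $(x(\alpha),y(\alpha))$ on $\mathcal P$, residue contributions from the poles $x^*$ of $\phi_2$ and $y^{**}$ of $\phi_1$, and matching of the $z_0$-dependent factor with the harmonic function $h_\alpha$. One structural difference worth noting: the paper does not shift the double contour directly; it first reduces the double Bromwich integral to three one-dimensional integrals $I_1,I_2,I_3$ (Lemma~\ref{lem:I123}) by taking the residue at $y=Y^+(x)$ in the inner integral, and only then deforms each single-variable contour along the Morse coordinate. This makes the residue bookkeeping and the tail estimates (Lemmas~\ref{residus} and~\ref{pp}) considerably cleaner than trying to manage a two-dimensional deformation.

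There are, however, two genuine gaps in what you have written.

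First, your claim that non-nullity of $h_{\alpha_0}(z_0)$, $h_{\alpha^*}(z_0)$, $h_{\alpha^{**}}(z_0)$ ``follows from the explicit product-form series being strictly positive'' is false as stated. The coefficients $\kappa_m(\alpha)$ in Theorem~\ref{thm:main} carry an alternating sign $(-1)^m$, so the series for $h_\alpha$ is genuinely alternating and positivity is not visible termwise. The paper needs a separate argument (Section~\ref{sub:6.2}): one first observes from the leading term of the series that $h_\alpha(z_0)>0$ for $z_0$ far out in direction $\alpha$, and then propagates positivity inward via a probabilistic comparison (strong Markov property combined with the uniform asymptotic expansion~\eqref{unif} of $g^{z_0}$ in $z_0$). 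Without this step the theorem as stated is unproved.

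Second, your argument implicitly assumes the starting point is away from the corner: the decay estimates needed to kill the vertical tails after the contour shift (the analogue of Proposition~\ref{lap_dec_inv} and Lemma~\ref{pp}) degenerate when $z_0=(0,0)$. The paper treats $z_0=(0,0)$ separately, by averaging $g^{(0,0)}$ over the exit distribution on a small arc and using harmonicity of $h_{\alpha_0}$ on $\R_+^2\setminus\{0\}$ together with the \emph{uniformity} in $z_0$ of the saddle-point error (Lemma~\ref{moyenne} and equation~\eqref{unif}). You should either restrict to $z_0\neq(0,0)$ and then give a limiting argument, or flag this explicitly.
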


\subsubsection*{Explicit expressions for positive harmonic functions with the compensation method}
 Let us recall the following definition: a function $h : \R_+^2 \longrightarrow \R$ is harmonic if and only if for all $t \geq 0$ and $z_0 \in \R_+^2$, 
\begin{equation}\label{def:harm}
\E_{z_0}[h(Z_t)] = h(z_0).
\end{equation}
All functions $h_\alpha,  \alpha \in [\alpha^*, \alpha ^{**}]$  are harmonic\nr{and are called Martin harmoinc functions}. 
These functions are explicitly stated in Theorem~\ref{thm:main} below and will be derived in this article using the compensation method.
The essence of this method is to construct functions that satisfy the partial differential equation along with boundary conditions:
\begin{equation}\label{BVP}
\left\{
    \begin{array}{ll}
        {(H_0)} \quad\mathcal{G}h = 0 \quad\quad\quad \quad\quad \textnormal{on} \quad (0, +\infty)^2,   \\
	  {(H_1)} \quad\partial_{R_1}h(0, y) = 0, \quad y \geq 0\\
	  {(H_2)} \quad\partial_{R_2}h(x, 0) = 0,  \quad x \geq 0
    \end{array}
\right.
\end{equation}
where  $\mathcal{G} =\frac{1}{2} \nabla\cdot\Sigma\nabla + \mu\cdot\nabla$.
Those function are harmonic as it will be noticed in Section~\ref{subsec:5.1}.

For $(a_0,b_0) \in \mathcal{P}$ and $k \in \Z\backslash\{0\},$ we set \begin{equation}\label{eq:an}
a_{2k} = -2k^2 +  2(a_0 - b_0 - \mu_2)k + a_0, \ \ a_{2k+1} = a_{2k}
\end{equation}
\begin{equation}b_{2k} = -2k^2 + 2(a_0 - b_0 + \mu_1) k + b_0, \ \ b_{2k+1} = b_{2k+2}\label{eq:bn}
\end{equation}
\begin{figure}[hbtp]
\centering
\includegraphics[scale=0.7]{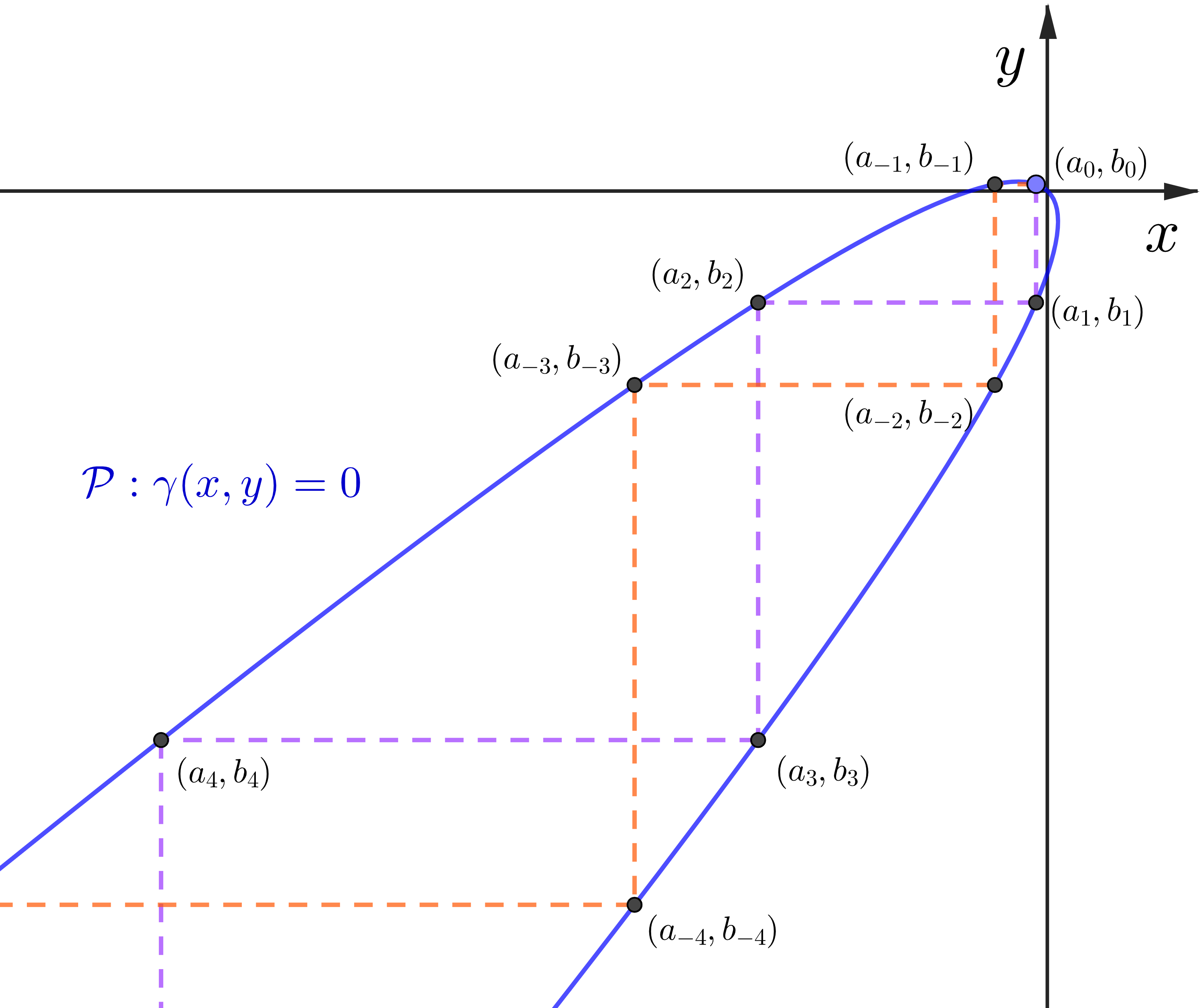}
\vspace{0cm}
\caption{Parabola $\mathcal{P}$ and points $(a_n, b_n)$ on the parabola.}
\label{autoo1}
\end{figure}
As illustrated in Figure~\ref{autoo1}, points $(a_p, b_p) \in \mathcal{P}$ are constructed by following the "downstairs" path on the parabola, 
applying successively automorphisms that leave invariant the first or the second coordinate respectively.

\begin{theorem}[Explicit expressions for\nr{Martin} harmonic functions \nb{$(h_\alpha)_{\alpha\in[\alpha^*, \alpha^{**}]}$}]\label{thm:main}
Assume (\ref{drift}) to (\ref{simple}). Then, \nr{the family of Martin harmonic functions is given by}\nb{the functions} $(h_\alpha)_{\alpha \in [\alpha^*, \alpha^{**}]}$ \nb{are harmonic and are given by}\nr{with} the following formulae
\begin{itemize}
\item For $\alpha \in (\alpha^{*}, \alpha^{**})$, taking $(a_0, b_0) = (x(\alpha), y(\alpha))$, we have
\begin{equation}\label{harm1}
h_{\alpha} : z_0 \longmapsto \sum_{m=-\infty}^{+\infty} \kappa_m(\alpha)e^{z_0\cdot (a_{m}, b_{m})}
\end{equation}
where $\kappa_0(\alpha) = 1$ and
\begin{equation}\label{eq:kappam}
\kappa_m(\alpha)=\left\{
    \begin{array}{ll}
        & (-1)^m\left[\prod_{k=0}^{\lfloor \frac{m}{2} \rfloor-1} \frac{\frac{\gamma_1}{\gamma_2}(a_{2k+1}, b_{2k+1})}{\frac{\gamma_1}{\gamma_2}(a_{2k+2}, b_{2k+2})}\right]\frac{\gamma_2(a_0, b_0)}{\gamma_2(a_{m}, b_{m})} \quad\textnormal{if}\; m > 0\\
	& (-1)^m\left[\prod_{k=0}^{\lfloor \frac{-m}{2} \rfloor-1} \frac{\frac{\gamma_2}{\gamma_1}(a_{-2k-1}, b_{-2k-1})}{\frac{\gamma_2}{\gamma_1}(a_{-2k-2}, b_{-2k-2})}\right]  \frac{\gamma_1(a_0, b_0)}{\gamma_1(a_{m}, b_{m})} \quad\textnormal{if}\; m < 0
    \end{array}
\right.
\end{equation}
(with the convention $\Pi_{k=0}^{-1} = 1$).


\item For $\alpha = \alpha^*$,
\begin{itemize}
    \item If $\frac{2}{r_{2} + 1} < \mu_2$, then $\alpha^*= 0$ and $h_{0} : z_0 \longmapsto \partial_\alpha\left[h_\alpha(z_0)\right]_{\alpha=0^+}$.
    \item If $\frac{2}{r_{2} + 1} > \mu_2$, then $\alpha^* > 0$ and taking $(a_0, b_0) = (x(\alpha^*), y(\alpha^*))$,
\begin{equation}\label{harm2}
h_{\alpha^*} : z_0\longmapsto e^{z_0\cdot (a_{1}, b_{1})} + \sum_{m=2}^{+\infty}\widehat{\kappa}_m(\alpha^*)e^{z_0\cdot (a_{m}, b_{m})} 
\end{equation}
where 
$$
\widehat{\kappa}_m(\alpha^*)= (-1)^{m+1}\frac{\gamma_1(a_{1}, b_{1})}{\frac{\gamma_1}{\gamma_2}(a_{2}, b_{2})}\left[\prod_{k=1}^{\lfloor \frac{m}{2} \rfloor-1} \frac{\frac{\gamma_1}{\gamma_2}(a_{2k+1}, b_{2k+1})}{\frac{\gamma_1}{\gamma_2}(a_{2k+2}, b_{2k+2}}\right]\frac{1}{\gamma_2(a_{m}, b_{m})}.
$$

\item If $\frac{2}{r_{2} + 1} = \mu_2$, then $\alpha^*= 0$ and taking $(a_0, b_0) = (x(0), y(0))$,
\begin{equation}\label{harm3}
h_{0} : z_0 \longmapsto 2e^{z_0\cdot (a_0, b_0)} + \sum_{m=-\infty}^{-1} \kappa_m(\alpha^*)e^{z_0\cdot (a_{m}, b_{m})} + \sum_{m=2}^{+\infty} \tilde\kappa_m(\alpha^*)e^{z_0\cdot (a_{m}, b_{m})}
\end{equation}
where 
$$
\tilde\kappa_m(\alpha^*)= (-1)^{m+1} \frac{\gamma_1(a_{1}, b_{1})}{\frac{\gamma_1}{\gamma_2}(a_{2}, b_{2})}\left[\prod_{k=1}^{\lfloor \frac{m}{2} \rfloor-1} \frac{\frac{\gamma_1}{\gamma_2}(a_{2k+1}, b_{2k+1})}{\frac{\gamma_1}{\gamma_2}(a_{2k+2}, b_{2k+2})}\right]\frac{1}{\gamma_2(a_{m}, b_{m})}.
$$
\end{itemize}
\item For $\alpha = \alpha^{**}$, symmetrical formulae hold replacing $r_1$ by $r_2$, $\mu_1$ by $\mu_2$ and $0$ by $\frac{\pi}{2}$.

\end{itemize}

\end{theorem}

Note that if $\alpha < \alpha^*$ or $\alpha > \alpha^{**}$, expression~\eqref{harm1} may define a harmonic function that is not necessarily non-negative everywhere.

The Martin boundary \nb{and its minimality} \nr{is}\nb{are} derived from \nb{Theorem~\ref{thm1} and} Theorem~\ref{thm:main}, \nb{together with the further technical results in Theorems~\ref{alpha=0}, \ref{directions_poles} and \ref{thm:7} concerning the asymptotics of Green functions along the directions $0,\alpha^*, \alpha^{**}$ and $\pi/2$}.\nr{, along with its minimality.}

\begin{theorem}[Martin Boundary]\label{thm:Martin}
Under (\ref{drift}) to (\ref{simple}), the Martin boundary $\Gamma$ of the degenerate reflected Brownian motion is \nr{given by}\nb{homeomorphic to} $[\alpha^*, \alpha^{**}]$ \nr{through}\nb{via the mapping} \nr{the homeomorphism} 
\begin{equation}\alpha \in [\alpha^*, \alpha^{**}] \longmapsto h_\alpha(\cdot)/h_\alpha(0) \in \Gamma.
\end{equation}
Furthermore, the Martin boundary is minimal.
\end{theorem}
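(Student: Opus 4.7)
My plan is to identify the Martin boundary via pointwise convergence of the Martin kernel
$K(z_0,z) := g^{z_0}(z)/g^{0}(z)$
along arbitrary sequences $(z_n)$ tending to infinity in $\R_+^2$, then to establish minimality by a Doob $h$-transform argument.

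\textbf{Step 1: limits of the Martin kernel.} Write $z_n=(r_n\cos\alpha_n,r_n\sin\alpha_n)$ with $r_n\to\infty$ and, up to a subsequence, $\alpha_n\to\alpha_0\in[0,\pi/2]$. I plug the asymptotics of Theorem~\ref{thm1} (and the companion Theorems~\ref{alpha=0}, \ref{directions_poles}, \ref{thm:7} covering the boundary angles $0,\alpha^*,\alpha^{**},\pi/2$) into the numerator and denominator of $K(z_0,z_n)$. In the regime $\alpha_0\in(\alpha^*,\alpha^{**})$, both are governed by \eqref{cas_1} and the factors $c_{\alpha_0}$, $e^{-r(\cos(\alpha)x(\alpha)+\sin(\alpha)y(\alpha))}$ and $1/\sqrt{r}$ cancel between numerator and denominator, leaving
\begin{equation*}
K(z_0,z_n)\longrightarrow h_{\alpha_0}(z_0)/h_{\alpha_0}(0).
\end{equation*}
For $\alpha_0<\alpha^*$ the ratio of \eqref{cas_2} at $z_0$ and at $0$ yields $h_{\alpha^*}(z_0)/h_{\alpha^*}(0)$; similarly $h_{\alpha^{**}}(z_0)/h_{\alpha^{**}}(0)$ when $\alpha_0>\alpha^{**}$. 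In every case the limit depends only on the image of $\alpha_0$ under the retraction of $[0,\pi/2]$ onto $[\alpha^*,\alpha^{**}]$. This requires the asymptotics to be sufficiently uniform in $\alpha$ near $\alpha_0$, which is guaranteed by the saddle-point / pole-contribution method used to establish Theorem~\ref{thm1}.

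\textbf{Step 2: the map is a homeomorphism.} Continuity of $\alpha\mapsto h_\alpha/h_\alpha(0)$ on $(\alpha^*,\alpha^{**})$ follows from the explicit series \eqref{harm1}: the coefficients $\kappa_m(\alpha)$ and the base point $(x(\alpha),y(\alpha))$ depend continuously on $\alpha$, while the series converges normally on compact subsets of $\R_+^2$ because $a_m+b_m$ diverges to $-\infty$ quadratically in $|m|$. Continuity at the endpoints comes from checking that \eqref{harm2}, \eqref{harm3} and their symmetric analogues at $\alpha^{**}$ are the appropriate limits (or one-sided derivatives) of \eqref{harm1}. Injectivity is obtained by noting that distinct $\alpha$ produce distinct leading exponents $(x(\alpha),y(\alpha))\in\mathcal{P}$, which govern the asymptotic growth of $h_\alpha(z_0)$ as $z_0\to\infty$ in a suitable direction. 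A continuous injection from the compact set $[\alpha^*,\alpha^{**}]$ into a Hausdorff space is automatically a homeomorphism onto its image.

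\textbf{Step 3: minimality.} I appeal to the standard criterion (see \cite{Chung2005MarkovPB, Doob1984ClassicalPT}): a non-negative harmonic function $h$ is minimal iff, under the Doob $h$-transform, the process $(Z_t)$ converges almost surely in the Martin compactification to a single boundary point. For $\alpha_0\in(\alpha^*,\alpha^{**})$, the generator of the $h_{\alpha_0}$-transformed diffusion has drift $\mu+\Sigma\nabla\log h_{\alpha_0}$, whose dominant term in the interior points in the direction $(\cos\alpha_0,\sin\alpha_0)$; combined with the fact that the reflection fields remain unchanged, this forces $Z_t/|Z_t|\to(\cos\alpha_0,\sin\alpha_0)$ almost surely and thus convergence to the single Martin point $\alpha_0$. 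The boundary cases $\alpha=\alpha^*$ and $\alpha=\alpha^{**}$ are handled analogously using the shifted/differentiated series \eqref{harm2}, \eqref{harm3}.

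\textbf{Main obstacle.} The delicate point will be Step~3 at $\alpha^*$ and $\alpha^{**}$, where the asymptotic \eqref{cas_2} carries no $1/\sqrt{r}$ correction and the structure of the harmonic function changes (shifted or differentiated series); showing that the $h$-transform still concentrates on a single boundary point requires a more subtle drift analysis than in the interior directions. A secondary technical difficulty is securing uniformity in $\alpha$ of the asymptotic expansions used in Step~1 precisely at the transition angles $\alpha^*,\alpha^{**}$, where the dominant contribution switches between a saddle-point term and a pole term.
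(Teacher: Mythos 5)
Your Steps 1 and 2 follow essentially the same route as the paper: the paper also identifies the Martin boundary by taking pointwise limits of $K(\cdot,z_n)=g^{z_0}(z_n)/g^{(0,0)}(z_n)$ along sequences $z_n\to\infty$ using Theorems~\ref{thm1}, \ref{alpha=0}, \ref{directions_poles}, \ref{thm:7} (see Corollary~\ref{construction}), and proves that $\Phi\colon\alpha\mapsto h_\alpha(\cdot)/h_\alpha(0)$ is a continuous injection from the compact $[\alpha^*,\alpha^{**}]$ to the Hausdorff $\Gamma$, hence a homeomorphism (Lemma~\ref{topoMartin}). Your injectivity argument via distinct leading exponents $(x(\alpha),y(\alpha))$ is exactly the paper's argument, which rests on the asymptotics \eqref{eqtheta}--\eqref{eqtheta3}.

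Step 3 is where you take a genuinely different route, and where there is a real gap. The paper proves minimality purely analytically: assuming $h_{\alpha_0}=\int_{[\alpha^*,\alpha^{**}]}h_\alpha\,d\mu(\alpha)$ (which is legitimate once the integral-representation Theorem~\ref{repres} is in place), it shows that the representing measure $\mu$ must be $\delta_{\alpha_0}$ by comparing exponential growth rates $e^{r(x(\alpha)\cos\theta+y(\alpha)\sin\theta)}$ along suitable test directions $\theta$, via the strict inequality \eqref{8.9} and the locally uniform lower bound of Lemma~\ref{asymp_uniforme}. This handles all of $[\alpha^*,\alpha^{**}]$, including the endpoints, with the same mechanism. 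You instead invoke the Doob $h$-transform criterion (minimality iff the $h$-transformed process converges a.s.\ to a single Martin point) and argue via the interior drift $\mu+\Sigma\nabla\log h_{\alpha_0}$. The intuition is not wrong --- since $\Sigma$ has rank one, $\Sigma\nabla\log h_{\alpha_0}\approx(x(\alpha_0)-y(\alpha_0))(1,-1)$ far from the origin and $\mu+(x(\alpha_0)-y(\alpha_0))(1,-1)=\nabla\gamma(x(\alpha_0),y(\alpha_0))$ is indeed parallel to $(\cos\alpha_0,\sin\alpha_0)$ --- but you have not turned this into a proof. In particular: (a) you need uniform control on the discrepancy between $\nabla\log h_{\alpha_0}$ and $(x(\alpha_0),y(\alpha_0))$ throughout the quadrant, not just asymptotically in the direction $\alpha_0$; (b) the reflected local-time terms enter the $h$-transform dynamics, and while the Neumann conditions $\partial_{R_i}h_{\alpha_0}=0$ do preserve the reflection vectors, establishing $Z_t/|Z_t|\to(\cos\alpha_0,\sin\alpha_0)$ a.s.\ for the degenerate (rank-one noise) reflected transform is a nontrivial a.s.\ convergence statement that deserves its own argument rather than being asserted; and (c) as you flag, at $\alpha^*$ and $\alpha^{**}$ the harmonic function switches form (shifted or differentiated series in \eqref{harm2}, \eqref{harm3}) and the drift computation changes. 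The paper's analytic approach sidesteps all of this; your probabilistic approach, if completed, would give a conceptually appealing alternative but requires substantially more work than the sketch provides.
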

\begin{figure}[hbtp]
\centering
\includegraphics[scale=0.5]{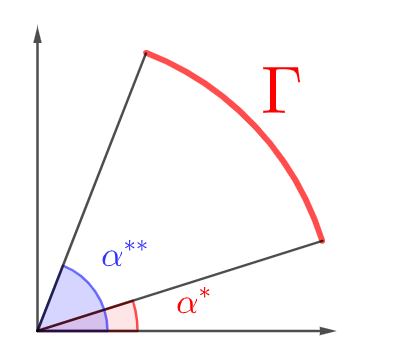}
\vspace{0cm}
\caption{Martin boundary $\Gamma$ when $0 < \alpha^*$ and $\alpha^{**} < \pi/2$.}
\label{Fig:Martin}
\end{figure}

\nb{\begin{rem}[On Assumptions \eqref{drift} and \eqref{vectors}, and possible extensions]$\quad$\label{rem:generalisation}
\begin{itemize}
\item[-] Regarding assumption \eqref{drift}, similar results could be established under the more general condition $\mu_1\sigma_2 + \mu_2\sigma_1 > 0$. This condition is equivalent to the orientation of the parabola toward $x\to -\infty$ and $y\to -\infty$. It is also necessary for the convergence of the expressions defining $h_\alpha$ — specifically, equation \eqref{harm1}.
Namely, if $\mu_2 < 0$, the Laplace transform $\phi_2$ would have a pole at zero. Due to the technical nature of this paper, we have chosen to restrict our analysis to Assumption \eqref{drift}. Investigating how the Martin boundary is affected by the presence of such a pole could be an interesting direction for future work. 
\item[-]
If \eqref{vectors} is not satisfied, the arguments which yield the explicit expressions of the harmonic functions fail. In particular, attempts to construct the functions $h_\alpha$ without this assumption often lead to signed functions which, while possibly harmonic, are not necessarily non-negative. For interested readers, the only step in our argument that fails for general reflection vectors is equation \eqref{le_reste}, which may offer a direction for future investigation.
\end{itemize}
\end{rem}
}


\subsection*{Plan of the article}
In Section~\ref{sec:2}, we define the degenerate reflected Brownian motion. We then derive the functional equation~\eqref{Equation_fonctionelle_intro} in Section~\ref{sec:3} and meromorphically extend Laplace transforms on the edges up to their singularities. 
In Section~\ref{sec:5}, we obtain the explicit form of the Laplace transforms \nr{using the compensation method$\;$}\nb{iterating the functional equation \eqref{Equation_fonctionelle_intro}}\ngray{$\;$Even if the final expressions coincide with those of the compensation method, it is more precise to mention the iteration of the functional equation}. Next, in Section~\ref{sec:6} we carry out preparatory work to derive the asymptotics of Green’s functions.  These asymptotics are computed in all directions in Sections~\ref{sec:7} and~\ref{sec:8} by the saddle-point method. \nr{We then}\nb{This enables us to} prove Theorem~\ref{thm1} and Theorem~\ref{thm:main} by employing the explicit expressions from Section~\ref{sec:5}. In Section~\ref{sec:9} we establish the asymptotics of the Martin kernel and identify all \nr{Martin}\nb{the} harmonic functions. We also prove the minimality of the Martin boundary and conclude the proof of Theorem~\ref{thm:Martin}. Finally, in Section~\ref{App:A} we treat the general case of the model without Assumption~\eqref{simple} via a linear transformation of space and time.


\section{Definition of the process}\label{sec:2}

Throughout the following, the filtered space we consider is always the space of continuous functions $\mathcal{C}(\R_+,\R_+^2)$ with the standard $\sigma-$field and the usual filtration. \nb{The following background definition is taken from \cite{taylor_existence_1993}, where the non-degenerate reflected Brownian motion is studied.}

\begin{defi}[Degenerate reflecting Brownian motion]\label{defi}
Let $\Sigma, R$ and $\mu$ be defined as in \eqref{donnees}. A degenerate reflecting Brownian motion (DRBM) associated with the data $(\Sigma, \mu, R)$ is a process $(Z_t)_{t\geq 0}$ and a family of measures $(\P_{z_0})_{z_0\in\R_+^2}$ such that $(Z_t)_{t\geq 0}$ can be written as
\begin{equation}\label{semimart}
Z_t = X_t + RL_t \in \R_+^2, \ \ \  t \geq 0,
\end{equation}
where
\begin{itemize}
\item $(X_t - \mu t)_{t\geq0}$ is an adapted degenerate Brownian motion (with zero drift) of covariance $\Sigma$ starting from $z_0$ under $\P_{z_0}$.
\item L is an adapted $2$-dimensional process starting from $0$ such that $\P_{z_0}-$almost surely, its components $L^1, L^2$ are continuous and non-decreasing with $\supp(dL^i) \subset \{t \geq 0, Z^i_t = 0 \}$; that is, $L^i$ increases only when $Z^i_t = 0$.

\end{itemize}

\end{defi}

Note that under $\P_{z_0}$, $Z$ can be written as $(Z_t)_{t\geq0} = (z_0 + vB_t + \mu t + RL_t)_{t\geq0}$ where $(B_t)_{t\geq0}$ is a one-dimensional Brownian motion and $v = ({\sigma_1}, -{\sigma_2})$ ($= (1, -1)$ under \eqref{simple}) is the unique eigenvector (up to a scalar multiplication) associated with the positive eigenvalue of the covariance matrix.

\begin{theorem}[Existence, uniqueness and Strong Markov property]\label{process}
Suppose that $|r_1r_2| < 1$. Then, for any starting point $z_0$, there exists a DRBM  associated with $(\Sigma, \mu, R)$. The processes $Z$ and $(Z, L)$ are pathwise unique (according to the associated degenerate Brownian motion). Furthermore, $Z$ is a semi-martingale, a Feller process (i.e., for any $t\geq 0,$ $x \longmapsto \E_x[f(Z_t)]$ is continuous whenever $f$ is bounded and continuous), and a Strong Markov process. 
\end{theorem}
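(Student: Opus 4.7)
The plan is to reduce the problem to a pathwise Skorokhod problem in the quadrant and then invoke the classical existence/uniqueness theory of Harrison--Reiman~\cite{HaRe-81b} and Taylor--Williams~\cite{taylor_existence_1993}. Fix a standard one-dimensional Brownian motion $B$ and set the driving path $X_t := z_0 + v B_t + \mu t$. By Definition~\ref{defi}, constructing a DRBM starting from $z_0$ amounts to solving, pathwise in $\omega$, the deterministic Skorokhod problem: given a continuous $X : \R_+ \to \R^2$ with $X(0) \in \R_+^2$, find $(Z, L) \in \mathcal{C}(\R_+, \R_+^2 \times \R_+^2)$ with each $L^i$ continuous non-decreasing starting at $0$, $Z = X + RL$, and $\supp(dL^i) \subset \{t : Z^i_t = 0\}$.

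Under $|r_1 r_2| < 1$, which in dimension $2$ is the completely-$\mathcal{S}$ condition on $R$, the works of Harrison--Reiman and Taylor--Williams yield a unique solution $(Z, L)$ for every admissible driving path, together with Lipschitz continuity of the associated Skorokhod map $\Gamma : X \mapsto Z$ for the topology of uniform convergence on compacts. Applying this pathwise to $X_t = z_0 + v B_t + \mu t$ delivers existence and pathwise uniqueness of $(Z, L)$. Since $X$ is a continuous semi-martingale and each $L^i$ is continuous of finite variation, the decomposition $Z = X + RL$ exhibits $Z$ as a continuous semi-martingale. For the Feller property, the Lipschitz continuity of $\Gamma$ gives that $z_0^n \to z_0$ implies almost-sure uniform-on-compacts convergence of the driving paths, hence of $Z^n$ to $Z$; dominated convergence then gives $\E_{z_0^n}[f(Z_t)] \to \E_{z_0}[f(Z_t)]$ for every bounded continuous $f$. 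The strong Markov property follows from the Feller property combined with the continuity of paths by the standard argument.

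The main obstacle is that Harrison--Reiman and Taylor--Williams developed their framework for non-degenerate SRBM, so one must verify that their pathwise construction of the Skorokhod map does not use any non-degeneracy of the covariance $\Sigma$. This turns out to be harmless: their construction depends only on the geometry of the quadrant and on the algebraic condition $|r_1 r_2| < 1$ on $R$, and is completely insensitive to the law (or support) of the driving path $X$. In our setting, although $X_t - z_0 - \mu t$ takes values on the one-dimensional line $\R v$, the pathwise theory still applies verbatim once we view $X$ merely as a generic continuous $\R^2$-valued path, so the rank-one degeneracy of $\Sigma$ does not enter the argument.
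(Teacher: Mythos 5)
Your proposal follows essentially the same route as the paper: both reduce to a deterministic pathwise Skorokhod problem driven by $X_t = z_0 + vB_t + \mu t$ and invoke Harrison--Reiman~\cite{HaRe-81b}, whose Lipschitz Skorokhod map gives pathwise existence, uniqueness, the semimartingale decomposition, the Feller property, and consequently the strong Markov property. The paper's proof is a one-liner: it sets $Q = I - R$, notes $\rho(Q) = \sqrt{|r_1r_2|} < 1$, and cites Theorem~1 of~\cite{HaRe-81b}; you supply exactly the same mechanism with more elaboration, and your remark that the construction is insensitive to the degeneracy of $\Sigma$ is the correct (implicit) justification.

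Two small inaccuracies are worth flagging, though neither breaks the argument. First, $|r_1r_2|<1$ is not \emph{the} completely-$\mathcal{S}$ condition in dimension~$2$: for $R$ with unit diagonal, completely-$\mathcal{S}$ fails only when both $r_1, r_2 < 0$ with $r_1 r_2 \geq 1$, so $|r_1 r_2|<1$ is sufficient but strictly stronger. What $|r_1r_2|<1$ is exactly equivalent to is the spectral-radius condition $\rho(I - R) < 1$ required by Harrison--Reiman, which is the precise hypothesis the paper checks. Second, leaning on Taylor--Williams~\cite{taylor_existence_1993} as an interchangeable alternative is a bit loose: their existence/uniqueness is via the submartingale problem and weak solutions, not a pathwise Lipschitz Skorokhod map, so the ``the construction is insensitive to the law of the driving path'' argument you use does not transfer verbatim to their framework. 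The clean reference for the degenerate case is Harrison--Reiman's pathwise construction, which is exactly what the paper uses.
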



\begin{proof}
Define the matrix $Q = I-R$, whose spectral radius is $\rho(Q) = \sqrt{|r_1 r_2|} < 1$. \nb{By Theorem~$1$ in \cite{HaRe-81b}, for any continuous path $x = (x_t)_{t\geq0} \subset \R^2$ there exists a unique solution $(z_t)_{t\geq0} = \psi(x)$ of the Skorokod problem $$z_t = x_t + R(l^1_t, l^2_t)^T, \quad t\geq0$$ where $(z_t)_{t\geq 0} \subset \R_+^2$ and for $i\in\{1,2\}$, $(l^i)_{t\geq0}$ is a continuous, increasing function with $\supp(dl^i) \subset \{t \geq 0, z^i_t = 0 \}$. Moreover, $\psi$ is continuous in the topology of uniform convergence on compact sets. This yields the stated result with $Z = \psi(X)$.}
\nr{Therefore, Theorem~$1$ in \cite{HaRe-81b} implies all the stated results.}
\end{proof}
\nb{As in the non-degenerate case, there may be existence and uniqueness in law if $R$ is a general $\mathcal{S}$-matrix \cite{taylor_existence_1993} (without assuming $|r_1r_2|<1$), but not pathwise uniqueness \cite{bass2024}. To avoid excessive technicality, we work under assumption $|r_1r_2| < 1.$}

\begin{prop}[Transience]\label{prop:transient}
Under conditions (\ref{drift}) and (\ref{vectors}), the DRBM is a transient Markov process.
\end{prop}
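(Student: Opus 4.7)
The plan is to isolate a linear functional of $Z_t$ that, because of the rank-one covariance, is pathwise deterministic up to the non-decreasing local times, and then to show that this functional grows linearly in $t$. Transience falls out immediately once such a bound is available.

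Concretely, using the semimartingale decomposition from Definition~\ref{defi}, one has $\P_{z_0}$-a.s.
\begin{equation*}
Z^1_t = z_0^1 + \sigma_1 B_t + \mu_1 t + L^1_t + r_2 L^2_t,\qquad Z^2_t = z_0^2 - \sigma_2 B_t + \mu_2 t + r_1 L^1_t + L^2_t,
\end{equation*}
for some standard one-dimensional Brownian motion $B$. Since $v=(\sigma_1,-\sigma_2)$ is the direction of the Brownian component, the orthogonal vector $(\sigma_2,\sigma_1)$ is the natural candidate: forming the corresponding linear combination kills the $B_t$ contribution and yields
\begin{equation*}
\sigma_2 Z^1_t + \sigma_1 Z^2_t = \sigma_2 z_0^1 + \sigma_1 z_0^2 + (\sigma_2\mu_1 + \sigma_1\mu_2)\,t + (\sigma_1 r_1+\sigma_2)\,L^1_t + (\sigma_2 r_2+\sigma_1)\,L^2_t.
\end{equation*}
Assumption~\eqref{drift} makes the drift coefficient strictly positive, and assumption~\eqref{vectors} is exactly what makes both reflection coefficients strictly positive (the geometric content of Figure~\ref{rebonds}: $R_1,R_2$ push strictly outward from the degenerate direction $v$). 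Since $L^1,L^2$ start at $0$ and are non-decreasing, I obtain the pathwise lower bound
\begin{equation*}
\sigma_2 Z^1_t + \sigma_1 Z^2_t \;\geq\; \sigma_2 z_0^1 + \sigma_1 z_0^2 + (\sigma_2\mu_1 + \sigma_1\mu_2)\,t \qquad \text{for all } t\geq 0.
\end{equation*}

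From here, transience is immediate. Given any compact $K\subset\R_+^2$, pick $M$ with $K\subset\{\sigma_2 x + \sigma_1 y \leq M\}$; the inequality above forces $\{t\geq 0: Z_t\in K\}\subset[0,T]$ for the deterministic constant $T := (M - \sigma_2 z_0^1 - \sigma_1 z_0^2)_+/(\sigma_2\mu_1+\sigma_1\mu_2)$. Consequently $G(z_0,K)\leq T<\infty$ and $\sigma_2 Z^1_t+\sigma_1 Z^2_t\to\infty$ almost surely, which together give the desired transience of the Markov process.

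I do not anticipate any real obstacle. The only creative step is selecting the direction $(\sigma_2,\sigma_1)$ orthogonal to $v$; once this is done, assumptions~\eqref{drift} and~\eqref{vectors} conspire exactly so that the Brownian part vanishes while drift and reflection contributions all have the correct sign, and the monotonicity of the local times does the rest. The hypothesis $|r_1 r_2|<1$ of Theorem~\ref{process}, needed only to ensure existence and the semimartingale form of $Z$, is implicit throughout.
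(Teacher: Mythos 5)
Your proof is correct and is essentially the same as the paper's: the paper also projects $Z_t$ onto $w=(\sigma_2,\sigma_1)$ orthogonal to $v$, observes that the Brownian part vanishes while the drift and local-time contributions are non-negative (the paper states this as the terse inequality $Z_t\cdot w \geq \mu\cdot w\,t$), and concludes transience. You have merely made the cancellation and the sign checks on the local-time coefficients explicit, which is the same argument spelled out in more detail.
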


\begin{proof}
Consider $w = ({\sigma_2}, {\sigma_1})$, 
which is orthogonal to the direction of the Brownian motion. It suffices to note that $(Z_t\cdot w)_{t\geq0}$ is almost surely strictly increasing and tends to $+\infty$ since $Z_t\cdot w \geq \mu\cdot wt$ by \eqref{vectors}.
\end{proof}
We recall the definition of Green's measure $G(z_0, \cdot)$ and $H_i(z_0,\cdot)$ from \eqref{mes_green} and \eqref{mes_bord}. Assumption~\eqref{drift} on the drift is crucial for the following proposition.

\begin{prop}[Densities and Laplace transforms]
Suppose that assumptions \eqref{drift} and \eqref{vectors} hold. Then, Green's measure $G(z_0, \cdot)$ has a density $g^{z_0}(\cdot)$ with respect to the Lebesgue measure. Functions $g^{z_0}(\cdot)$ are called Green's functions. Furthermore, measures $H_i(z_0,\cdot)$ ($i =1, 2$), have densities $f_i^{z_0}(\cdot)$ with respect to the one-dimensional Lebesgue measure.
\end{prop}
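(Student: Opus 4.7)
The plan is to carry out a pathwise excursion decomposition of $Z$ between its successive visits to $\partial\R_+^2$. Between such visits the dynamics~\eqref{semimart} reduce to $dZ_t=\mu\,dt+v\,dB_t$, i.e.\ a one-dimensional drifted Brownian motion along the line $\R v$; the crucial point is that $\mu$ is not collinear with $v$, since $\mu_1,\mu_2>0$ by~\eqref{drift} whereas $v_1v_2<0$. As the excursion elapses, the affine lines $\{Z_{a}+\mu s+\R v\}$ therefore sweep out a full two-dimensional region, and this transverse drift is exactly what will produce the densities.

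For the interior density, I would decompose $[0,\infty)$ into the closed random set $\cal B=\{t\geq 0:Z_t\in\partial\R_+^2\}$ (of zero Lebesgue measure, by the standard argument for a semi-martingale reflecting process with non-tangential reflection) and its open complement, a countable disjoint union of excursion intervals $(a_n,b_n)$. On each such interval both local times $L^1,L^2$ are frozen, so $Z_t=Z_{a_n}+\mu(t-a_n)+v(B_t-B_{a_n})$. Conditioning on $\cal F_{a_n}$ and applying the strong Markov property, the linear map $\Phi:(s,\beta)\mapsto Z_{a_n}+\mu s+v\beta$ has Jacobian $|\det(\mu,v)|=\mu_1\sigma_2+\mu_2\sigma_1>0$, so pushing the Gaussian density of $(s,B_s)$ through $\Phi^{-1}$ (together with the indicator that the excursion has not yet terminated) yields an explicit density on $\inter{\R_+^2}$ for the expected occupation of that excursion. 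Summing over $n$ by Tonelli then produces a non-negative measurable function $g^{z_0}$ with $G(z_0,A)=\int_A g^{z_0}(z)\,dz$; local integrability is ensured by Proposition~\ref{prop:transient}, since with $w=(\sigma_2,\sigma_1)$ one has $Z_t\cdot w\geq(\mu\cdot w)\,t\to\infty$, forcing $G(z_0,K)<\infty$ on every bounded $K$.

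For the boundary measures $H_i$, the plan is to express them as thin-layer limits of $G$ near the axis through the Tanaka--Meyer identity for the semi-martingale $Z$: up to a positive constant $c_i$ depending on $\sigma_i$,
\[
\E_{z_0}\!\!\int_0^\infty F(Z_t)\,dL^i_t
\;=\;\lim_{\delta\downarrow 0}\frac{1}{c_i\,\delta}\int_{\{0\leq z_i<\delta\}} F(z)\,g^{z_0}(z)\,dz
\]
for bounded continuous test functions $F$, from which $H_i(z_0,\cdot)$ inherits from $G(z_0,\cdot)$ a one-dimensional density $f_i^{z_0}$ on the axis, equal to the boundary trace of $g^{z_0}$. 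The main obstacle lies here: one must justify both the approximation of $dL^i$ by thickened occupation times and the existence of a trace of $g^{z_0}$ on the axis. The first point is classical for a non-tangentially reflected semi-martingale under~\eqref{vectors}; for the second, within each excursion the exit point on the boundary obeys the first-passage law of a one-dimensional drifted Brownian motion, which is smooth (Bachelier--L\'evy), and summing over excursions produces a bona fide boundary trace of $g^{z_0}$.
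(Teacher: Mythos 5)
Your proof follows essentially the same route as the paper's: decompose into excursions away from $\partial\R_+^2$, invoke the strong Markov property to reduce to the free (unreflected) degenerate Brownian motion, and exploit the linear independence of $\mu$ and $v$ --- the paper rotates so the drift lies along an axis, whereas you compute the Jacobian $|\det(\mu,v)|=\mu_1\sigma_2+\mu_2\sigma_1$ of $(s,\beta)\mapsto z+\mu s+v\beta$ directly, but these are the same observation. For the boundary measures $H_i$ the paper is terse (``the proposition follows from elementary properties'' of one-dimensional Brownian motion, i.e.\ the first-passage density); your thin-layer/Tanaka argument is a legitimate way of spelling this out, and is in fact consistent with the identity $f_i^{z_0}=\tfrac12\,g^{z_0}\big|_{\text{axis}}$ that the paper derives later by a Laplace-transform limit.
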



\begin{proof}

Let $A$ be a compact set of $\R_+^2$ at a positive distance of the edges. Define the stopping times:
$$\nr{\sigma = \inf\{t\geq\tau,\ Z_t \in A\}}\nb{\sigma = \inf\{t\geq0,\ Z_t \in A\}}, \quad\tau = \inf\{t\geq\sigma, \ Z_t \in \partial \R_+^2\}.$$
\nr{Following exactly the same arguments as in \cite{harrison_brownian_1987}, Lemma~$9$ of Section~$7$, we can reduce the proof to showing the fact that}
\nb{Considering the back-and-forth trajectories between $A$ and $\partial\R_+^2$ (see \cite[Lemma~$9$ of Section~$7$]{harrison_brownian_1987}), we can reduce the proof to showing that:} $$\E_{z_0}\left[\int_{\sigma}^{\tau} 1_{A}(Z_s)ds\right] = 0.$$
Then, by the Strong Markov property, it suffices to prove the result for a non-reflected degenerate Brownian motion. By Assumption~(\ref{drift}), rotating the plane so that the $x$-axis aligns with the drift direction reduces the problem to one-dimensional Brownian motion.
The proposition then follows from elementary properties of the latter.
\end{proof}

\begin{defi}[Laplace transforms of Green's measures] We denote the Laplace transforms of $G(z_0,\cdot)$ by
$$\varphi(x, y) := \E_{z_0}\left[\int_0^\infty e^{(x, y)\cdot Z_t}dt\right]
=\int_{\mathbb{R}_+^2} e^{(x,y)\cdot z}g^{z_0}(z) dz
$$ 
and the Laplace transforms of $H_1(z_0,\cdot), H_2(z_0,\cdot)$ by $$
\varphi_1(y):= \E_{z_0}\left[\int_0^\infty e^{(0,y)\cdot Z_t}dL^1_t\right],
\quad 
\varphi_2(x):= \E_{z_0}\left[\int_0^\infty e^{(x,0)\cdot Z_t}dL^2_t\right].$$ 
For brevity, we omit the dependence on the starting point in the notation for the Laplace transforms. However, when relevant, we will denote this dependence explicitly as $\varphi^{z_0}(x, y), \varphi_1^{z_0}(y)$ and $\varphi_2^{z_0}(x)$.
\end{defi}

\nr{Finally, we outline some properties of the Laplace transforms that will be useful later.
\begin{lemma}[Decay of Laplace transforms]\label{lap_decroit} Suppose \eqref{drift} to \eqref{simple}.
For any initial point $z_0 = (a_0, b_0)$ and $p\geq0$,
\begin{equation}\label{eq:lap_decroit}
\varphi_2^{z_0}(-p) \leq e^{-p(a_0+b_0)}\varphi_2^{z_0}(0).
\end{equation}
The symmetric result holds for $\phi_1^{z_0}$.
\end{lemma}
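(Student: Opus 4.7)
The plan is to exploit the one-dimensional nature of the degenerate Brownian motion by projecting the semimartingale decomposition onto a direction orthogonal to $v$, which will kill the stochastic term and give a useful pointwise inequality on the support of $dL^2$.

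First I would write the semimartingale decomposition from Definition~\ref{defi} as
\begin{equation*}
Z_t = z_0 + vB_t + \mu t + R_1 L^1_t + R_2 L^2_t,
\end{equation*}
with $v = (1,-1)$ under~\eqref{simple}. The key observation is that the vector $w = (1,1)$ is orthogonal to $v$, so projecting onto $w$ eliminates the Brownian term. Since $\mu\cdot w = \mu_1+\mu_2 = 1$ by~\eqref{simple} and $R_i\cdot w = 1 + r_i > 0$ by~\eqref{vectors} (recall $\sigma_1=\sigma_2=1$), we obtain
\begin{equation*}
Z_t^1 + Z_t^2 = (a_0+b_0) + t + (1+r_1)L^1_t + (1+r_2)L^2_t.
\end{equation*}

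Next I would restrict attention to the support of $dL^2$: there $Z_t^2 = 0$, so the previous identity yields
\begin{equation*}
Z_t^1 = (a_0+b_0) + t + (1+r_1)L^1_t + (1+r_2)L^2_t \;\geq\; a_0+b_0,
\end{equation*}
using non-negativity of $t$, $L^1_t$, $L^2_t$ and positivity of $1+r_1, 1+r_2$. Consequently, for $p\geq 0$, one has $e^{-pZ_t^1}\leq e^{-p(a_0+b_0)}$ $dL^2_t$-almost everywhere.

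Finally I would integrate this pointwise bound against $dL^2_t$ and take $\E_{z_0}$:
\begin{equation*}
\varphi_2^{z_0}(-p) = \E_{z_0}\!\left[\int_0^\infty e^{-pZ_t^1}\,dL^2_t\right] \leq e^{-p(a_0+b_0)}\,\E_{z_0}\!\left[\int_0^\infty dL^2_t\right] = e^{-p(a_0+b_0)}\varphi_2^{z_0}(0),
\end{equation*}
which is exactly~\eqref{eq:lap_decroit}. The symmetric statement for $\varphi_1^{z_0}$ follows by swapping the roles of the two coordinates in the same argument. There is no real obstacle here; the only subtlety is checking that the orthogonal projection kills both the Brownian and drift-parallel-to-$v$ contributions, which is precisely what Assumption~\eqref{simple} guarantees, and that~\eqref{vectors} gives the correct signs for the local time coefficients.
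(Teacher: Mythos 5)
Your proof is correct and takes essentially the same route as the paper. The paper's one-line proof simply notes that the support of the measure $H_2(z_0,\cdot)$ is contained in $[a_0+b_0,+\infty)$ and then integrates the density $f_2^{z_0}$; your projection onto $w=(1,1)$ is exactly the argument that establishes this support claim (and is the same orthogonal-projection observation the paper already uses in the proof of Proposition~\ref{prop:transient}), so you have merely spelled out the step the paper treats as immediate.
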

\begin{proof}
Note that the support of measure $H_2((a_0, b_0), \cdot)$ is $\left[a_0+b_0, +\infty\right)$, so
$$\varphi_2^{z_0}(-p) = \int_{a_0+b_0}^{+\infty}e^{-px}f_2^{z_0}(x)dx \leq e^{-p\left(a_0+b_0\right)}\int_{a_0+b_0}^{+\infty}f_2^{z_0}(x)dx = e^{-p\left(a_0+b_0\right)}\varphi_2^{z_0}(0).$$
\end{proof}}

\section{Functional equation, kernel and analytic continuation}\label{sec:3}
From now on, we assume~(\ref{drift}) to (\ref{simple}). As mentioned in the introduction, Laplace transforms $\phi, \phi_1, \phi_2$ are linked by a functional equation.

\begin{prop}[Functional equation] \label{functional}
If $Re(x) < 0$ and $Re(y) < 0$, then $\varphi_1(y)$, $\varphi_2(x)$ and $\varphi(x,y)$ converge and the following equation holds
\begin{equation}\label{Equation fonctionnelle}
    - \gamma(x, y)\varphi(x, y) = \gamma_1(x, y)\varphi_1(y) + \gamma_2(x, y)\varphi_2(x) + e^{(x, y)\cdot z_0}
\end{equation}
where $\gamma, \gamma_1$ and $\gamma_2$ are defined in \eqref{gamma}, \eqref{gamma12}.
\end{prop}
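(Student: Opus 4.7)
\textbf{Proof plan for Proposition~\ref{functional}.}

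The plan is to apply Itô's formula to the complex-valued exponential $f(z)=e^{(x,y)\cdot z}$ along the semi-martingale decomposition $Z_t=z_0+vB_t+\mu t+RL_t$, and then pass to the limit $T\to\infty$. Before doing so I need to check that all three Laplace transforms are absolutely convergent when $\Re(x)<0$ and $\Re(y)<0$. Writing $\Re(x)=-a$, $\Re(y)=-b$ with $a,b>0$ and $\delta:=\min(a,b)>0$, the orthogonal vector $w=(1,1)$ used in the transience proof (Proposition~\ref{prop:transient}) gives $Z_t^1+Z_t^2\geq (\mu_1+\mu_2)t=t$, hence the pointwise bound
\begin{equation*}
|f(Z_t)|=e^{-aZ_t^1-bZ_t^2}\leq e^{-\delta(Z_t^1+Z_t^2)}\leq e^{-\delta t}.
\end{equation*}
This is immediately integrable in $t$, giving convergence of $\varphi(x,y)$. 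For the boundary transforms, the same inequality shows $|f(Z_t)|\leq e^{-\delta t}$ on the support of $dL^i_t$; combined with the finiteness of $\E_{z_0}[L^i_\infty]=\varphi_i(0)$ (which is the content of the density proposition and underlies Lemma~\ref{lap_decroit}), dominated convergence gives convergence of $\varphi_1(y)$ and $\varphi_2(x)$.

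Next I would apply Itô's formula to $f(Z_t)$. A direct computation shows that $\mathcal{G}f=\gamma(x,y)f$, since under assumption~\eqref{simple}, $\tfrac{1}{2}(x,y)\Sigma(x,y)^T=\tfrac12(x-y)^2$ and $\mu\cdot(x,y)=\mu_1x+\mu_2y$. Similarly $R_i\cdot\nabla f=\gamma_i(x,y)f$. Hence
\begin{equation*}
f(Z_t)=f(z_0)+\gamma(x,y)\!\int_0^t\!f(Z_s)\,ds+\gamma_1(x,y)\!\int_0^t\!f(Z_s)\,dL^1_s+\gamma_2(x,y)\!\int_0^t\!f(Z_s)\,dL^2_s+M_t,
\end{equation*}
where $M_t=\int_0^t f(Z_s)(x,y)\cdot v\,dB_s$ is a local martingale. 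The bound $|f(Z_s)|\leq e^{-\delta s}$ shows that $M$ has bounded quadratic variation in expectation, so it is a true $L^2$-martingale with $\E_{z_0}[M_t]=0$.

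Taking expectation on $[0,T]$ and using the bound $\E_{z_0}[|f(Z_T)|]\leq e^{-\delta T}\to 0$ together with dominated convergence on each of the three integrals (using the estimates above to dominate uniformly in $T$) yields, in the limit $T\to\infty$,
\begin{equation*}
0=e^{(x,y)\cdot z_0}+\gamma(x,y)\varphi(x,y)+\gamma_1(x,y)\varphi_1(y)+\gamma_2(x,y)\varphi_2(x),
\end{equation*}
which, after moving $\gamma\varphi$ to the left-hand side, is exactly~\eqref{Equation fonctionnelle}.

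The main technical point is the passage to the limit: the pointwise decay $|f(Z_t)|\leq e^{-\delta t}$ coming from the transience estimate $Z_t\cdot w\geq t$ is what drives both the vanishing of $\E_{z_0}[f(Z_T)]$ and the dominated convergence of the local-time integrals; without this linear-in-$t$ lower bound on $Z_t^1+Z_t^2$, the exponential weights $e^{(x,y)\cdot Z_t}$ would only be bounded by $1$, and nothing would guarantee integrability against $dt$ or against $dL^i$. The martingale step and the computations of $\mathcal{G}f$ and $R_i\cdot\nabla f$ are otherwise straightforward.
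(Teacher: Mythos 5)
Your argument follows essentially the same route as the paper: apply Itô's formula to $z\mapsto e^{(x,y)\cdot z}$ along the semimartingale decomposition, take expectations, and let $T\to\infty$. The paper compresses the last step into a citation, while you supply the estimates. Two remarks.

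The core decay bound is correct and well identified. Under~\eqref{simple}, $w=(1,1)$ is orthogonal to $v$, so $Z^1_t+Z^2_t = z_0\cdot w + t + (1+r_1)L^1_t + (1+r_2)L^2_t \geq t$, which gives $|e^{(x,y)\cdot Z_t}|\leq e^{-\delta t}$ with $\delta=\min(-\Re x,-\Re y)>0$; this indeed makes $M$ a genuine $L^2$-martingale and forces $\E[e^{(x,y)\cdot Z_T}]\to 0$.

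The weak point is the convergence of the boundary transforms. You invoke $\E_{z_0}[L^i_\infty]=\varphi_i(0)<\infty$ and attribute it to the density proposition and to Lemma~\ref{lap_decroit}. Neither establishes it: the density proposition only yields existence of densities, and Lemma~\ref{lap_decroit} \emph{presupposes} $\varphi_2^{z_0}(0)<\infty$ rather than proving it. Moreover finiteness of $\E[L^i_\infty]$ is not needed here. What you need is $\varphi_i(\Re(\cdot))<\infty$ for a strictly negative real argument, and that can be obtained directly from the same orthogonality: since $v\cdot w=0$, the process $S_t:=Z^1_t+Z^2_t$ has no diffusive part and satisfies $dS_t = dt + (1+r_1)\,dL^1_t + (1+r_2)\,dL^2_t$, so for any $\epsilon>0$,
\begin{equation*}
e^{-\epsilon S_0} - e^{-\epsilon S_t} = \epsilon\int_0^t e^{-\epsilon S_u}\,du + \epsilon(1+r_1)\int_0^t e^{-\epsilon S_u}\,dL^1_u + \epsilon(1+r_2)\int_0^t e^{-\epsilon S_u}\,dL^2_u ,
\end{equation*}
whence $\int_0^\infty e^{-\epsilon Z^2_u}\,dL^1_u \leq \dfrac{e^{-\epsilon(z_0^1+z_0^2)}}{\epsilon(1+r_1)}$ pathwise (using $Z^1_u=0$ on $\supp dL^1_u$), and likewise for $L^2$. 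Taking $\epsilon$ slightly smaller than $-\Re(y)$ gives absolute convergence of $\varphi_1(y)$ (and of $\varphi_2(x)$ symmetrically) without reference to $\varphi_i(0)$. With this substitution your proof is complete and coincides with the paper's intended argument.
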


\begin{proof}
We apply Itô's formula to the semimartingale $(Z_t)_{t\geq0}$ and the function $
(u, v) \longmapsto e^{xu + yv}$. Then,
\begin{align}
        e^{(x,y)\cdot Z_t} - e^{(x,y)\cdot z_0} &= \int_0^t\ e^{(x,y)\cdot Z_t}(x,y)^T\cdot dB_s + \gamma(x,y)\int_0^te^{(x,y)\cdot Z_s}ds + \sum_{i=1}^2\gamma_i(x,y)\int_0^t e^{(x,y)\cdot Z_s}dL^i_s.
\end{align}
where $(B_t)_{t\geq0} = (X_t - \mu t)_{t\geq0}$ is the non reflected degenerate Brownian motion associated with the process (see Definition~\ref{defi}).
Next, taking the expectation and letting $t$ to $+\infty$, we derive \eqref{Equation fonctionnelle}\nr{by the same arguments as in \cite[Proposition 2.7]{franceschi2023asymptotics}.}\nb{. See \cite[Proposition 2.7]{franceschi2023asymptotics} for a detailed version of the proof in the non-degenerate case.} 
\end{proof} 
Considering $\gamma(x, y)$ as a polynomial in $x$ (resp. $y$) with coefficients depending on $y$ (resp. $x$), we obtain two complex branches $Y^+(x)$, $Y^-(x)$ (resp. $X^+(y)$, $X^-(y)$) satisfying $\gamma(x, Y^\pm(x)) = \gamma(X^\pm(y), y) = 0$:

\begin{equation}\label{Y-}
Y^{\pm}(x) = x  - \mu_2 \pm\sqrt{-2x + \mu_2^2}, \ \ X^{\pm}(y) =y  - \mu_1 \pm\sqrt{-2y + \mu_1^2}.
\end{equation}

We have one branching point $x_{max} = \frac{\mu_2^2}{2} > 0$ 
$\big($resp. $y_{max}= \frac{\mu_1^2}{2} > 0 \big)$ for $Y^{\pm}$ (resp. $X^\pm$). The square roots are chosen to be defined as holomorphic functions on $\C \backslash (-\infty, 0)$ and take non-negative values on the non-negative reals.

\begin{lemma} \label{lemXYY}
Let $u, v \in \R$ such that $u + iv \notin [x_{max}, +\infty[$. Then, we have:
\begin{equation}\label{Ymoins}
\Re(Y^\pm(u + iv)) =u - \mu_2
\pm \frac{1}{\sqrt{2}}\sqrt{\mu_2^2 - 2u  + \sqrt{(\mu_2^2 -2u)^2 + 4v^2}} 
\end{equation}
If $u, v \in \R$ satisfy $u + iv \notin [y_{max}, +\infty[$, then
\begin{equation}\label{Xmoins}
\Re(X^\pm(u + iv)) = u - \mu_1 \pm  \frac{1}{\sqrt{2}}\sqrt{\mu_1^2 - 2u + \sqrt{(\mu_1^2 - 2u)^2 + 4v^2}}.
\end{equation}
Let $\delta = \min(\mu_1, \mu_2) > 0$. Then, $\Re(Y^-(x)) < 0$ for all $x$ such that $\Re(x) < x_{max} + \delta, x \notin [x_{max}, +\infty[$. Similarly, $\Re(X^-(y)) < 0$ for all $y$ such that $\Re(y) < y_{max} + \delta, y \notin [y_{max}, +\infty[$.
\end{lemma}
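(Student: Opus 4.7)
The plan is to derive both identities for $\Re(Y^\pm)$ and $\Re(X^\pm)$ by direct computation using the standard formula for the real part of the principal square root of a complex number, and then establish the sign statement by a case analysis on the position of $\Re(x)$ relative to $\mu_2$.

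First I would recall the elementary identity: for any $a + ib$ on the principal branch's domain of analyticity ($\C \setminus (-\infty, 0]$),
\begin{equation*}
\sqrt{a + ib} = \tfrac{1}{\sqrt 2}\Bigl(\sqrt{\sqrt{a^2+b^2} + a} \ +\ i\,\mathrm{sgn}(b)\sqrt{\sqrt{a^2+b^2} - a}\Bigr),
\end{equation*}
which one verifies by squaring. I would then apply this with $a+ib = (\mu_2^2 - 2u) - 2iv$ to the expression $Y^\pm(u+iv) = (u-\mu_2) + iv \pm \sqrt{(\mu_2^2 - 2u) - 2iv}$; taking real parts yields \eqref{Ymoins} directly. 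The branch cut $[x_{\max},+\infty[$ corresponds exactly to the excluded set where $(\mu_2^2 - 2u) - 2iv$ falls on $(-\infty, 0]$, so the formula is valid in the asserted domain. Formula \eqref{Xmoins} is obtained identically after exchanging the roles of $\mu_1$ and $\mu_2$ (I would also flag what looks like the typo $(\mu_1^2 - u)^2$ in place of $(\mu_1^2 - 2u)^2$).

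For the inequality $\Re(Y^-(x)) < 0$, the condition from \eqref{Ymoins} to verify is
\begin{equation*}
u - \mu_2 < \tfrac{1}{\sqrt 2}\sqrt{(\mu_2^2 - 2u) + \sqrt{(\mu_2^2 - 2u)^2 + 4v^2}}.
\end{equation*}
I would split into two cases. In the regime $u \le \mu_2$ the left side is non-positive while the right side is non-negative; the right side vanishes precisely when $v=0$ and $\mu_2^2 - 2u \le 0$, i.e. when $v = 0$ and $u \ge x_{\max}$. This equality case is exactly the excluded half-line $[x_{\max},+\infty[$, so strict inequality holds everywhere else in this regime (in particular this covers all $u < x_{\max}$ without any condition on $v$).

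The genuinely delicate case is $\mu_2 < u < x_{\max} + \delta$. Here both sides are positive, so after squaring twice I would reduce the inequality to the polynomial condition
\begin{equation*}
v^2 > u\,(u + 2\mu_1)\,(u - \mu_2)^2,
\end{equation*}
using $1 - \mu_2 = \mu_1$ to simplify. The main obstacle is controlling this: the bound $u < x_{\max} + \delta = \tfrac{\mu_2^2}{2} + \min(\mu_1,\mu_2)$ forces $u - \mu_2$ to be small enough that this inequality, combined with $x \notin [x_{\max},+\infty[$, can be made to hold. I would treat the two sub-cases $\delta = \mu_2$ and $\delta = \mu_1 = 1 - \mu_2$ separately, expressing $u - \mu_2$ in terms of the gap $x_{\max}+\delta - u$ and exploiting $v \ne 0$ (since $u > \mu_2 \ge x_{\max}$ rules out $v=0$). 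The statement for $\Re(X^-)$ follows immediately from the $Y^-$ statement by the $(\mu_1 \leftrightarrow \mu_2)$ symmetry of the argument.
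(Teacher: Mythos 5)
Your derivation of the real-part formula from the identity $\sqrt{a+ib}=\tfrac{1}{\sqrt2}\bigl(\sqrt{\sqrt{a^2+b^2}+a}+i\,\mathrm{sgn}(b)\sqrt{\sqrt{a^2+b^2}-a}\bigr)$ is correct and is essentially what the paper does (the paper simply declares it ``direct''), and you are right that $(\mu_1^2-u)^2$ in \eqref{Xmoins} should read $(\mu_1^2-2u)^2$. Your treatment of the regime $u\le\mu_2$ is also correct. The algebra that reduces the sign claim for $\mu_2<u<x_{\max}+\delta$ to
\begin{equation*}
v^2>u\,(u+2\mu_1)\,(u-\mu_2)^2
\end{equation*}
is correct as well.

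However, the final step of your plan cannot succeed, and this is a genuine gap. For any fixed $u$ with $\mu_2<u<x_{\max}+\delta$, the right-hand side $u(u+2\mu_1)(u-\mu_2)^2$ is a strictly positive constant that does not depend on $v$; since the only constraint from $x\notin[x_{\max},+\infty[$ is $v\neq 0$, one may take $|v|$ as small as one likes and the required inequality fails. Making $u-\mu_2$ small (via $u<x_{\max}+\delta$) does not help: the obstruction is that $v$ may be even smaller. Concretely, with $\mu_1=\mu_2=\tfrac12$ one has $x_{\max}+\delta=\tfrac58$, and for $x=0.55+0.001\,i$ one finds $\Re(Y^-(x))\approx 0.049>0$, so the statement $\Re(Y^-(x))<0$ is actually false on the strip $\mu_2<\Re(x)<x_{\max}+\delta$ whenever that strip is nonempty (which happens, e.g., for all $\mu_2\le 2-\sqrt2$). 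In other words, the lemma as printed is slightly overstated: the paper's own one-line justification, resting on $x_{\max}<\mu_2$, only yields $\Re(Y^-(x))<0$ for $\Re(x)\le\mu_2$ (in fact for $\Re(x)<x_{\max}$, which is all the paper appears to use directly), not on the wider strip up to $x_{\max}+\delta$. What is true on the full strip $\Re(x)<x_{\max}+\delta$, and what actually suffices for the iterated analytic continuation in Corollary~\ref{coro:cont}, is the weaker bound $\Re(Y^-(x))\le u-\mu_2<x_{\max}+\delta-\mu_2\le y_{\max}$. You should not spend effort trying to close your Step~5; instead, note the discrepancy and prove the weaker (correct) inequality, which drops straight out of your formula with no case analysis at all.
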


\begin{proof}
Equations (\ref{Ymoins}) and (\ref{Xmoins}) follow directly from the expression~\eqref{gamma} of $\gamma$. The last statements come from the inequalities $x_{max} = \frac{\mu_2^2}{2} < \mu_2$ and $y_{max} = \frac{\mu_1^2}{2} < \mu_1$.
\end{proof}

%
%

\begin{cor}[Continuation of Laplace transforms] \label{coro:cont}
The Laplace transforms $\varphi_1$ and $\varphi_2$ can be extended as meromorphic functions on $\{y \in \C, \Re(y) < y_{max} + \delta\} \backslash[y_{max}, y_{max} + \delta]$ and $\{x \in \C, \Re(x) < x_{max} + \delta\} \backslash[x_{max}, x_{max} + \delta]$ respectively via the formulae:
\begin{equation}
    \label{cont2}
\varphi_1(y)=  \frac{ -\gamma_2(X^-(y),y) \varphi_2(X^-(y)) -\exp \big(a_0 X^-(y) + b_0 y \big)}{\gamma_1(X^-(y),y)}
\end{equation} 
\begin{equation}
    \label{cont}
\varphi_2(x)=  \frac{ -\gamma_1(x,Y^-(x)) \varphi_1(Y^-(x)) -\exp \big(a_0 x + b_0 Y^{-}(x) \big)}{\gamma_2(x, Y^{-}(x))}.
\end{equation} 
\end{cor}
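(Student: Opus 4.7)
The plan is to derive the two identities by substituting $y=Y^-(x)$ (respectively $x=X^-(y)$) into the functional equation from Proposition~\ref{functional}, and then to use these identities as the definition of the meromorphic extension.

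First I would work in the strip where everything is already defined. Take $x \in \C$ with $\Re(x)<0$. By Lemma~\ref{lemXYY}, since $\Re(x)<0<x_{\max}+\delta$ and $x\notin[x_{\max},+\infty)$, we have $\Re(Y^-(x))<0$. Hence $\varphi_1(Y^-(x))$ is given by the convergent Laplace integral of Definition 2, and every term in the functional equation \eqref{Equation fonctionnelle} makes sense at $(x,Y^-(x))$. Since $\gamma(x,Y^-(x))=0$ by construction, the left-hand side vanishes and we get
\[
0 = \gamma_1(x,Y^-(x))\varphi_1(Y^-(x)) + \gamma_2(x,Y^-(x))\varphi_2(x) + e^{a_0 x + b_0 Y^-(x)},
\]
which, solving for $\varphi_2(x)$, is exactly \eqref{cont}. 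The same argument with $x=X^-(y)$ yields \eqref{cont2}.

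Next I would verify that the right-hand side of \eqref{cont} is meromorphic on the enlarged domain $D_x:=\{x\in\C,\ \Re(x)<x_{\max}+\delta\}\setminus[x_{\max},x_{\max}+\delta]$. The branch $Y^-$ is holomorphic on $\C\setminus[x_{\max},+\infty)$, in particular on $D_x$, because the cut of the square root has been chosen on $(-\infty,0)$ and $-2x+\mu_2^2\in(-\infty,0)$ only when $x\in[x_{\max},+\infty)$. The functions $\gamma_1,\gamma_2$ and the exponential $e^{a_0 x+b_0 Y^-(x)}$ are then holomorphic on $D_x$. The only non-trivial point is the evaluation $\varphi_1(Y^-(x))$: by Lemma~\ref{lemXYY}, for every $x\in D_x$ we have $\Re(Y^-(x))<0$, so $\varphi_1\circ Y^-$ is well-defined and holomorphic on $D_x$. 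Thus the entire right-hand side of \eqref{cont} is meromorphic on $D_x$, with poles located only among the zeros of $x\mapsto\gamma_2(x,Y^-(x))$.

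Finally, the identity \eqref{cont} holds on the non-empty open set $\{\Re(x)<0\}\subset D_x$, so by the principle of analytic (meromorphic) continuation the right-hand side is the unique meromorphic extension of $\varphi_2$ to $D_x$. The argument for $\varphi_1$ is symmetric, exchanging the roles of $(x,Y^-)$ and $(y,X^-)$, and using the corresponding half of Lemma~\ref{lemXYY}. The only subtlety throughout is the domain bookkeeping: one must use Lemma~\ref{lemXYY} at every step to guarantee that the argument $Y^-(x)$ (resp.\ $X^-(y)$) lies in the half-plane $\{\Re<0\}$ where the original Laplace transform converges; this is precisely what the inequality $x_{\max}<\mu_2$ (resp.\ $y_{\max}<\mu_1$) provides, and it is the only reason the enlargement by $\delta$ is admissible.
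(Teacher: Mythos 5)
Your proof is correct and follows exactly the route the paper intends when it writes that the corollary ``follows directly from Lemma~\ref{lemXYY} and the functional equation~\eqref{Equation fonctionnelle}'': substitute $y=Y^-(x)$ (resp.\ $x=X^-(y)$) into the functional equation so that the $\gamma$-term vanishes, use Lemma~\ref{lemXYY} to keep $Y^-(x)$ (resp.\ $X^-(y)$) in the convergence half-plane of the boundary Laplace transform, and observe that the resulting right-hand side is holomorphic on the slit region except for possible poles at the zeros of $\gamma_2(x,Y^-(x))$ (resp.\ $\gamma_1(X^-(y),y)$). You have merely spelled out the domain bookkeeping that the paper leaves implicit.
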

\begin{proof}
    This follows directly from Lemma~\ref{lemXYY} and the functional equation~\eqref{Equation fonctionnelle}.
\end{proof}

From now on, $\phi_1$ and $\varphi_2$ will be considered over their extended domains. 
Let us define
\begin{equation}
x^* = 2\frac{\mu_2r_{2} - \mu_1}{(1 + r_{2})^2}, 
 \quad \quad
  y^{**} = 2\frac{\mu_1r_{1} - \mu_2}{(r_{1} + 1)^2}.
  \label{eq:defx*y**}
\end{equation}
If equation $\gamma_2(x, Y^{-}(x)) = 0$ (resp. $\gamma_1(X^-(y),y) = 0$) has a solution in the complex plane, then it is unique and is given by $x = x^*$ (resp. \nr{$y=y^*$ }\nb{$y=y^{**}$}). We also define
\begin{equation}\label{eq:defx**y*}
 y^* = Y^+(x^*),\ \ \ \nr{x^{**} = X^+(y^*) }\nb{x^{**} = X^+(y^{**}) }, \end{equation}
\nb{see Figure~\ref{fig:x*y*x**y**}.}
\begin{prop}[Poles of Laplace transform] 
$ $

\begin{itemize} 
\item[(i)] $x=0$ (resp. $y = 0$) is not a pole of $\phi_2(x)$ (resp. $\varphi_1(y)$).
\item[(ii)]  If $x$ (resp. $y$) is a pole of $\phi_2(x)$ (resp. $\phi_1(y))$, 
then $x = x^*$ (resp. $y = y^{**}$) and $\gamma_1(x^*, Y^{-}(x^*)) = 0$ (resp. $\gamma_2(X^-(y^{**}),y^{**}) = 0$). Furthermore, $x^*$ is a pole of $\phi_2$ (resp. $y^{**}$ is a pole of $\phi_1$)  if and only if $(r_{2} + 1)\mu_2 > 2 \quad (resp. \quad (r_{1} + 1)\mu_1 > 2).$
\end{itemize} 
\label{pole} 
\end{prop}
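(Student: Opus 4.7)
The plan is to read both statements off the meromorphic continuation formula~\eqref{cont},
\[
\varphi_2(x) = \frac{-\gamma_1(x, Y^-(x))\varphi_1(Y^-(x)) - \exp(a_0 x + b_0 Y^-(x))}{\gamma_2(x, Y^-(x))},
\]
which locates the potential poles of $\varphi_2$ at the zeros of $\gamma_2(x, Y^-(x))$ (provided the numerator does not also vanish and $\varphi_1(Y^-(x))$ stays finite). For (i) I would evaluate at $x = 0$: formula~\eqref{Y-} gives $Y^-(0) = -2\mu_2$ (the other root $Y^+(0)=0$ lies on the upper branch), hence $\gamma_2(0, Y^-(0)) = -2\mu_2 \neq 0$ and the numerator is finite, so $\varphi_2$ is holomorphic at $0$. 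The analogous computation with~\eqref{cont2} yields the assertion for $\varphi_1$ at $y = 0$.

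For (ii), I would first solve $\gamma_2(x, Y^-(x)) = 0$. Writing this as $Y^-(x) = -r_2 x$ and substituting into the kernel equation $\gamma(x, -r_2 x) = 0$ produces a quadratic in $x$ with roots $0$ and $x^*$. As observed in (i), the root $x=0$ corresponds to the $Y^+$-branch, not to $Y^-$, leaving $x = x^*$ as the only candidate pole. The heart of the argument is then the branch identification: one must determine for which parameters $-r_2 x^* = Y^-(x^*)$ rather than $Y^+(x^*)$. Since $Y^\pm(x) = (x-\mu_2) \pm \sqrt{\mu_2^2 - 2x}$ with $Y^-$ the lower branch, I would compare $-r_2 x^*$ with the axis of symmetry $x^* - \mu_2$ and, using $\mu_1 + \mu_2 = 1$ from~\eqref{simple}, reduce the comparison to
\[
-r_2 x^* - (x^* - \mu_2) \;=\; \frac{2 - (r_2+1)\mu_2}{1 + r_2}.
\]
This quantity is negative precisely when $(r_2+1)\mu_2 > 2$, and the latter is therefore the exact condition for $\gamma_2(x, Y^-(x))$ to vanish at $x^*$; the boundary case $(r_2+1)\mu_2 = 2$ degenerates to $x^* = x_{\max}$ where the two branches coalesce.

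It remains to check that when $(r_2+1)\mu_2 > 2$ the zero at $x^*$ is not cancelled by a vanishing of the numerator in~\eqref{cont}. Under this condition $r_2, x^* > 0$, so $\Re(Y^-(x^*)) = -r_2 x^* < 0$ remains in the original domain of $\varphi_1$, making $\varphi_1(Y^-(x^*))$ finite; meanwhile $\exp(a_0 x^* + b_0 Y^-(x^*)) \neq 0$ for every starting point, forcing the numerator to be nonzero. Hence $x^*$ is a simple pole of $\varphi_2$. The symmetric argument based on~\eqref{cont2} delivers the statements for $\varphi_1$, $y^{**}$ and $(r_1+1)\mu_1$. The principal obstacle is the branch identification in the second paragraph: translating the purely analytic condition ``$\gamma_2(x, Y^-(x))$ vanishes at $x^*$'' into the clean geometric threshold $(r_2+1)\mu_2 = 2$, which, as already reflected in~\eqref{alpha**}, also governs the transition in the definition of $\alpha^{**}$.
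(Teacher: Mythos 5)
Your part (i) and your derivation of the threshold are fine, though you take a slightly different route from the paper's. You locate the two intersections $\{0,x^*\}$ of the line $\gamma_2=0$ with the parabola and decide which branch carries $(x^*,-r_2x^*)$ by comparing $-r_2x^*$ with the axis of symmetry $x^*-\mu_2$, reducing to the sign of $\frac{2-(r_2+1)\mu_2}{1+r_2}$; the paper instead reads the same threshold from the sign of $\gamma_2(x_{max},Y^\pm(x_{max}))$ via an intermediate-value argument along the lower branch. Both are correct and essentially equivalent; your version makes the threshold algebraically explicit, while the paper's is more geometric. (Incidentally the paper's evaluation $\gamma_2(0,Y^-(0))=-\mu_2-\mu_2^2/2$ contains a slip; with $Y^-(0)=-2\mu_2$ one gets $-2\mu_2$, but both are nonzero so part (i) is unaffected. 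You should also record that strict $(r_2+1)\mu_2>2$ gives $x^*<x_{max}$, i.e.\ $x^*$ lies strictly inside the domain of continuation and not at the branch point.)

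The genuine gap is in your last paragraph. You assert that since $\varphi_1(Y^-(x^*))$ is finite and $\exp(a_0x^*+b_0Y^-(x^*))\neq 0$, the numerator in~\eqref{cont} is nonzero; that inference is not valid, because the numerator is the sum $-\gamma_1(x^*,Y^-(x^*))\varphi_1(Y^-(x^*))-e^{a_0x^*+b_0Y^-(x^*)}$ and two nonzero terms can cancel. To make the argument work by positivity you would need $\gamma_1(x^*,Y^-(x^*))=x^*(1-r_1r_2)\ge 0$, which requires $r_1r_2\le 1$ (only implicitly available here, via the hypothesis of Theorem~\ref{process}), and even then you should say so. The paper is explicit that this non-vanishing is a genuine obstacle: it postpones the proof to Section~\ref{sub:7.2}, where it is deduced from the non-nullity of the harmonic function $h_{\alpha^*}(z_0)$ established through the saddle-point asymptotics and the harmonicity comparison argument of Section~\ref{sub:6.2}. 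So your final sentence states the right conclusion but supplies no valid justification for it at the point where you assert it.
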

\begin{figure}[hbtp]
\centering
\includegraphics[scale=1.6]{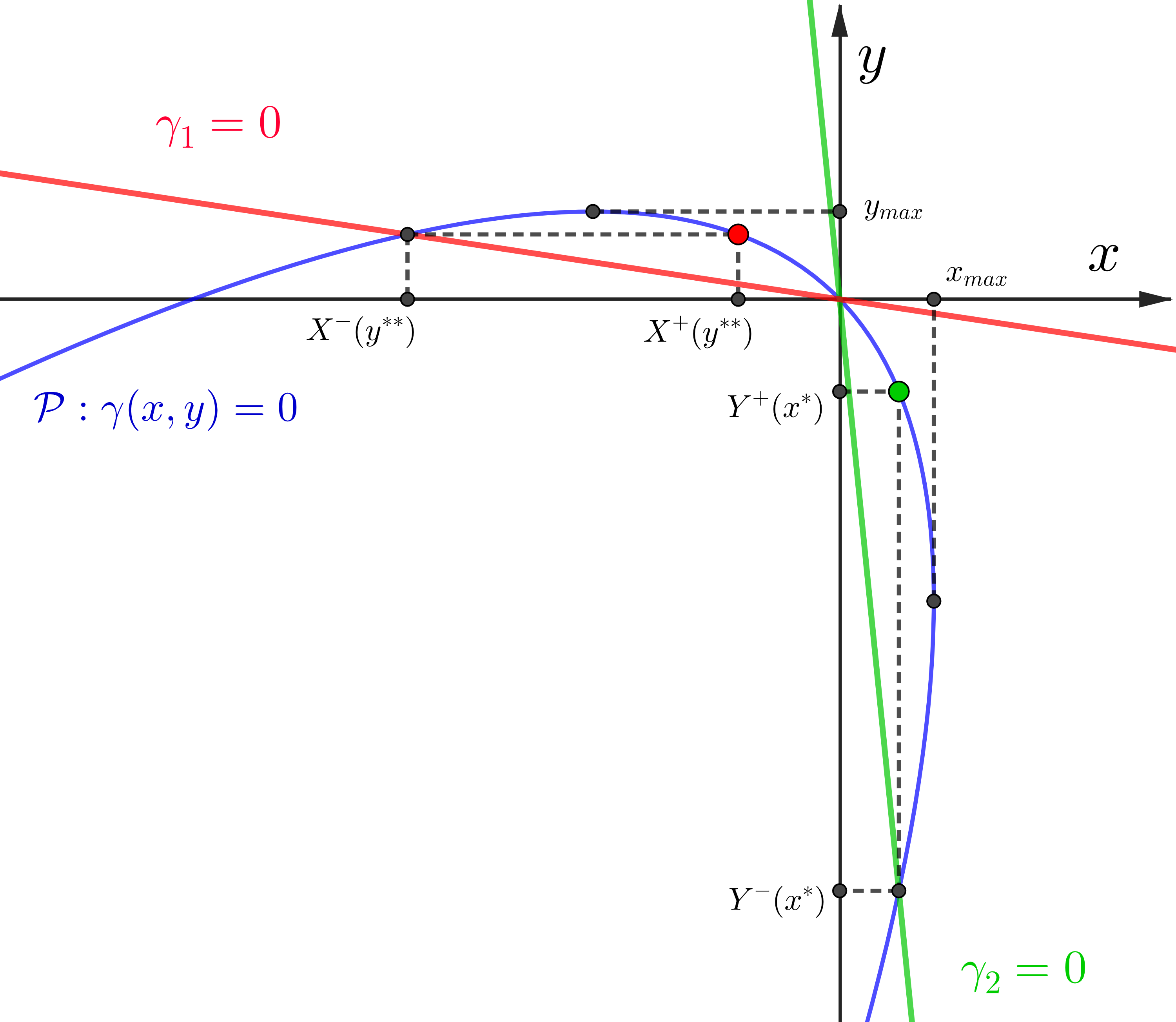}
\vspace{-0cm}
\caption{In the case of the figure, both $\phi_1$ and $\phi_2$ have a pole.}
\label{fig:x*y*x**y**}
\end{figure}

\begin{proof}
The first point follows from the continuation formula (\ref{cont}) since \nr{$\gamma_2(0, Y^-(0)) = -\mu_2 - \mu_2^2/2 \neq 0$ }\nb{$\gamma_2(0, Y^-(0)) = -2\mu_2\neq0$ }.

For (ii), if $x$ is a pole of $\phi_2$, it follows from \eqref{cont} that $\gamma_2(x, Y^-(x)) = 0$, which implies that $x = x^*.$  Moreover, the Laplace transform $\phi_2$ is holomorphic in ${\rm Re}(x)<0$. Thus, $x^*$ being a pole of $\phi_2$, must be positive. Note that equation $\gamma_2(x, Y^{-}(x)) = 0$ has a positive solution if and only if $\gamma_2(x_{max}, Y^{\pm}(x_{max})) > 0$. This last condition is equivalent to $(r_{2} + 1)\mu_2 > 2$.

Let us assume that $(r_{2} + 1)\mu_2 > 2$. Then, $x^* > 0$. Since $\gamma_2(x^*, Y^-(x^*)) = 0$, it follows from \eqref{cont} that $x^*$ is a pole of $\phi_2$ if the numerator of the right-hand side of \eqref{cont2} does not vanish at $x^*$. The last fact holds true, and \nb{is actually equivalent to the non-nullity of the function $h_{\alpha^*}(z_0)$ defined in \eqref{harm2}: this equivalence and the non-nullity are postponed to the end of Section~\ref{sub:6.2}.}\nr{ its proof is postponed to Section~\ref{sub:7.2} as a consequence of Theorem~\ref{directions_poles}}.
\end{proof}

The following proposition provides some estimates for the Laplace transforms\nr{and completes Lemma~\ref{lap_decroit}}\ngray{$\,$I deleted this part because Lemma~\ref{lap_decroit} is now after}. These estimates will be useful in Section~\ref{sec:6}.

\begin{prop}[\nb{Decay of Laplace transforms on $\textnormal{Re} = -\epsilon$}]\label{lap_dec_inv}
    Let $z_0 = (a_0, b_0) \in \R_+^2$ be an initial condition with $a_0 \neq 0, b_0\neq 0$ and $\epsilon > 0$. Then there exist constants $c, C > 0$ such that for $l=1, 2$,
    \begin{equation}\label{eq:lap_dec_inv}
        \forall v \in \R,\quad\quad|\phi_l^{z_0}(-\epsilon + iv)| \leq Ce^{-c\sqrt{|v|}}.
    \end{equation}
\end{prop}
\begin{proof}
    The expressions $\gamma_1(-\epsilon + iv, Y^-(-\epsilon + iv))$ and $\gamma_2(-\epsilon + iv, Y^-(-\epsilon + iv))$ grow linearly with respect to $v$ as $v$ tends to $\pm\infty.$ Furthermore, expression~\eqref{Ymoins} provides inequality $\exp(b_0Y^-(-\epsilon + iv)) \leq C_1e^{-b_0\sqrt{c_1|v|}}$ for some constants $c_1, C_1 > 0$ and  $$\phi_1^{z_0}(Y^-(-\epsilon + iv)) \leq \phi_1^{z_0}(0)e^{(a_0 + b_0)Re(Y^-(-\epsilon + iv))} \leq C_2 e^{-c_2\sqrt{|v|}}$$
    for some constants $c_2, C_2 > 0.$
    Finally, equation~\eqref{cont} implies the conclusion for $\phi_2.$ The proof for $\phi_1$ is analogous. 
\end{proof}
\nb{We also give some further estimates for Laplace transforms that will be useful in Section~\ref{subsec:4.3}. \ngray{This Lemma has just been moved from Section 2 to this place.}
\begin{lemma}[Decay of Laplace transforms]\label{lap_decroit} Assume that \eqref{drift} to \eqref{simple} hold.
For any initial point $z_0 = (a_0, b_0)$ and $p\geq0$,
\begin{equation}\label{eq:lap_decroit}
\varphi_2^{z_0}(-p) \leq e^{-p(a_0+b_0)}\varphi_2^{z_0}(0).
\end{equation}
The symmetric result holds for $\phi_1^{z_0}$.
\end{lemma}
\begin{proof}
By \eqref{vectors}, note that the support of the measure $H_2((a_0, b_0), \cdot)$ is $\left[a_0+b_0, +\infty\right)$. Then,
$$\varphi_2^{z_0}(-p) = \int_{a_0+b_0}^{+\infty}e^{-px}f_2^{z_0}(x)dx \leq e^{-p\left(a_0+b_0\right)}\int_{a_0+b_0}^{+\infty}f_2^{z_0}(x)dx = e^{-p\left(a_0+b_0\right)}\varphi_2^{z_0}(0).$$
\end{proof}}

\section{The compensation method \nb{and the explicit expressions of the Laplace transforms}}\label{sec:5}

\subsection{Heuristic of the compensation method}\label{subsec:5.1}
\nr{If $h$ is}\nb{Let $h$ be} a smooth function satisfying the following partial differential equation with boundary conditions:
\begin{equation}\label{BVP}
\left\{
    \begin{array}{ll}
        {(H_0)} \quad\mathcal{G}h = 0 \quad\quad\quad \quad\quad \textnormal{on} \quad (0, +\infty)^2   \\
	  {(H_1)} \quad\partial_{R_1}h(0, y) = 0, \quad y \geq 0\\
	  {(H_2)} \quad\partial_{R_2}h(x, 0) = 0,  \quad x \geq 0
    \end{array}
\right.
\end{equation}
\nb{with  $\mathcal{G} = \frac{1}{2}\nabla\cdot\Sigma\nabla + \mu\cdot\nabla$,} then $h$ is \nr{a }harmonic\nr{ function} (see~\cite[Section $6$]{ernst_franceschi_asymptotic_2021}). To demonstrate this, one may apply Itô's formula to the process $(Z_t)_{t\geq0}$ and $h \in C^2(\R_+^2, \R)$:
$$h(Z_t) = h(Z_0) + \int_0^t \nabla h(Z_s)dB_s + \int_0^t\mathcal{G}h(s)ds + \sum_{i=1}^2\int_0^t R_i\cdot\nabla h(Z_s)dL^i_s$$
where $(B_t)_{t\geq0} = (X_t - \mu t)_{t\geq0}$ is the non reflected degenerate Brownian motion associated with the process (see Definition~\ref{defi}). If $h$ satisfies~(\ref{BVP}), then $h(Z_t) = h(Z_0) + \int_0^t \nabla h(Z_s)dB_s$, and \nb{thus} $\E[h(Z_t)] = \E[h(Z_0)]$ \nb{(at least formally),} which implies that $h$ is harmonic (cf~\eqref{def:harm}).

The principle of the compensation method is to find functions of the form $h(x, y) = \sum_{n\in\Z}c_ne^{a_n x + b_n y}$ such that each exponential term satisfies condition ${(H_0)}$: $\mathcal{G}e^{a_n x + b_n y} = 0$ (i.e. $(a_n, b_n) \in \mathcal{P}$, see Figure~\ref{autoo1}) and to ``compensate'' the constants $(c_n)_{n\in\Z}$ \nb{so as} to ensure that conditions ${(H_1)}$ and ${(H_2)}$ are satisfied. 
We require:
\begin{equation}\label{formeh}
	h(x,y) = ... + \overunderbraces{&\br{2}{\in (H_2)}& &\br{3}{\in (H_2)}&}%
  { &c_{-2}e^{a_{-2} x + b_{-2} y} +  &c_{-1}e^{a_{-1} x + b_{-1} y} &+& c_0e^{a_0 x + b_0 y} &+&c_1e^{a_1 x + b_1 y}&+ c_2e^{a_2 x + b_2 y}& +...\;}%
  {& &\br{3}{\in (H_1)}& &\br{2}{\in (H_1)}&}.
\end{equation} 
Given that conditions $(H_0), (H_1), (H_2)$ are linear, it follows that $h$ is a harmonic function. By a direct computation, we find that conditions $(H_2)$ \nr{in}\nb{on} the right-hand side of~\eqref{formeh} are satisfied if and only if $a_{2k} = a_{2k+1}$ and
$c_{2n + 1}= -\frac{\gamma_2(a_{2n}, b_{2n})}{\gamma_2(a_{2n+1}, b_{2n+1})}c_{2n}$ for any integer $k.$ 
Similarly, conditions $(H_1)$ in the right-hand side of~\eqref{formeh} are satisfied if and only if $b_{2n+1} = b_{2n+2}$ and $c_{2n + 2} = -\frac{\gamma_1(a_{2n+1}, b_{2n+1})}{\gamma_1(a_{2n+2}, b_{2n+2})}c_{2n+1}$ for any integer $n$.

\nb{We will see in Section~\ref{sub:6.1} that the harmonic functions we obtain can be written as $$(x,y) \longmapsto \gamma_1(a_0,b_0)\phi_1^{(x,y)}(b_0) + \gamma_2(a_0,b_0)\phi_2^{(x,y)}(a_0) + e^{a_0x + b_0y}.$$ The explicit expressions of $\phi_1$ and $\phi_2$ in Section~\ref{subsec:4.3} then provide the exact formula \eqref{formeh} suggested by the compensation method. Moreover, the approach of Section~\ref{sub:6.1} justifies why the harmonic functions given by \eqref{formeh} are non-negative when $(a_0, b_0)$ is well chosen.}

\subsection{Parabola and automorphisms}
Let us recall that $\mathcal{P}$ is the parabola defined by $\mathcal{P} = \{(x, y) \in \R^2,\ \ \gamma(x, y) = 0\}$ (see \eqref{paraboleP}). \nb{Before defining the sequence $((a_n,b_n))_{n\in\Z}$ motivated by Section~\ref{subsec:5.1} (see Figure~\ref{autoo1}), we first give a parametrisation of $\mathcal{P}$.}

\nr{\begin{defi}[Automorphisms $\zeta$ and $\eta$]
For all $(x, y) \in \mathcal{P}$, equation $\gamma(x,v) = 0$ in $v$ (resp. $\gamma(u,y) = 0$ in $u$) has two solutions $y$ and $\tilde y$ (resp. $x$ and $\tilde x$), which may be equal. We then define $\zeta$ (resp. $\eta$) the automorphism on $\mathcal{P}$ as $\zeta(x,y) = (x, \tilde y)$ (resp. $\eta(x,y) = (\tilde x, y)$).
\end{defi}}
\ngray{I moved here the following proposition}
\begin{prop}[Parameterisation of $\mathcal{P}$]\label{x(s)}
The parabola~$\mathcal{P}$ (see~\eqref{paraboleP}) admits the following parameterisation:
\begin{equation}\label{parametrisation}
  \left\{
    \begin{array}{ll}
        x(s) &=   -\frac{1}{2}s(s - 2\mu_2)\\
      y(s) &= - \frac{1}{2}s\left(s + 2\mu_1\right)
    \end{array}
\right. , \quad s \in \R.  
\end{equation}
This means that $\{(x,y) \in \R^2, \ \gamma(x,y) = 0\} = \{(x(s),y(s)), \ s \in \R\}$.
\end{prop}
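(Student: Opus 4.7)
The plan is to verify the parameterisation by a direct computation in two directions: first, that every pair $(x(s), y(s))$ lies on $\mathcal{P}$, and second, that every point of $\mathcal{P}$ is reached. Both reduce to elementary algebra and use the simplifying assumption $\mu_1 + \mu_2 = 1$ from \eqref{simple}.

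For the first direction, I would compute $x(s) - y(s)$ and $\mu_1 x(s) + \mu_2 y(s)$ separately, then combine them in the definition of $\gamma$. A short calculation gives
\begin{equation*}
x(s) - y(s) = -\tfrac{1}{2}s(s-2\mu_2) + \tfrac{1}{2}s(s+2\mu_1) = s(\mu_1 + \mu_2) = s,
\end{equation*}
so $(x(s) - y(s))^2 = s^2$. Similarly,
\begin{equation*}
\mu_1 x(s) + \mu_2 y(s) = -\tfrac{1}{2}s\bigl[\mu_1(s-2\mu_2) + \mu_2(s+2\mu_1)\bigr] = -\tfrac{1}{2}s^2(\mu_1+\mu_2) = -\tfrac{1}{2}s^2.
\end{equation*}
Adding these according to \eqref{gamma} gives $\gamma(x(s), y(s)) = \tfrac{1}{2}s^2 - \tfrac{1}{2}s^2 = 0$, so the parameterisation lands in $\mathcal{P}$.

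For the reverse inclusion, given $(x,y) \in \mathcal{P}$, I would set $s := x - y \in \R$ and check that $(x,y) = (x(s), y(s))$. From $\gamma(x,y) = 0$ we get $\mu_1 x + \mu_2 y = -\tfrac{1}{2}(x-y)^2 = -\tfrac{1}{2}s^2$. Substituting $x = y + s$ into this identity and using $\mu_1 + \mu_2 = 1$ yields $y = -\mu_1 s - \tfrac{1}{2}s^2 = -\tfrac{1}{2}s(s+2\mu_1) = y(s)$, and then $x = y + s = -\tfrac{1}{2}s(s - 2\mu_2) = x(s)$ follows immediately.

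There is no real obstacle here; the proposition is a purely algebraic identity dressed as a parameterisation, and the role of hypothesis \eqref{simple} is simply to make the coefficient $\mu_1 + \mu_2$ equal to one, so that the linear change of coordinates $s = x-y$ cleanly inverts. The proof is therefore just the two short computations above.
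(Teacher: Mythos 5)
Your proof is correct and follows essentially the same approach as the paper: direct substitution to show $\gamma(x(s),y(s))=0$, followed by elementary algebra for the set equality. In fact your write-up is more complete than the paper's, which merely asserts that surjectivity "can be verified by elementary considerations"; your choice $s := x - y$ pins down the parameter explicitly and gives a clean one-line inversion, which simultaneously handles surjectivity and (implicitly) injectivity.
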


\begin{proof}
The relation $\gamma(x(s),y(s)) = 0$ is easily verified by substituting $x(s), y(s)$ into the \nr{equation for}\nb{expression \eqref{gamma} of} $\gamma(x,y)$. Furthermore, the parameterisation is injective. To show this, assume that $$\left\{
    \begin{array}{ll}
           s\left(s - 2\mu_2\right) = s'\left(s' - 2\mu_2\right)\\
           s\left(s + 2\mu_1\right) = s'\left(s' + 2\mu_1\right)
    \end{array}
\right..$$ Subtracting the second equation from the first gives $2s(\mu_1 + \mu_2) = 2s'(\mu_1 + \mu_2)$ which implies $s = s'$. Similarly, surjectivity can be verified by elementary considerations.
\end{proof}
\ngray{There in no additional content here, just a rearrangement.}
\nb{To define the ``downstairs'' as in Figure~\ref{autoo1}, we introduce two transformations on the parabola which leave the first (resp. second) coordinate invariant. This is the aim of the following proposition (which also serves as a definition). This proposition is illustrated by Figure~\ref{automorphismes}.
\begin{prop}[Automorphisms $\eta, \zeta$]
For $s \in \R$, we define
\begin{equation}\label{eta_zeta}
    \left\{
    \begin{array}{ll}
        \zeta s = -s + 2\mu_2\\
	  \eta s  = -s - 2\mu_1.
    \end{array}
\right.
\end{equation}
Then, $x(\zeta s) = x(s)$ and $y(\eta s ) = y(s)$ for all $s \in \R$. Therefore, $\phi_2(x(\zeta s)) = \phi_2(x(s))$ and $\phi_1(y(\eta s)) = \phi_1(y(s))$ in their respective domains of definition. Furthermore, for all $n\in\Z$ and $s\in \R$, we have
\begin{equation}\label{eq:etazetan}
(\eta \zeta)^n s =\nr{s+2n }\nb{s - 2n}.
\end{equation}
\end{prop}}

\begin{proof}
The formulae $x(-s + 2\mu_2) = x(s)$ and $y(-s-2\mu_1) = y(s)$ are easily verified. \nr{Then, expressions of $\zeta, \eta$ follow from elementary considerations of the second degree polynomial $\gamma(x,y)$ in $x, y$.} The expression of $(\eta\zeta)^n$ is a consequence of expressions of $\eta, \zeta$ and of the equation $\mu_1 + \mu_2 = 1$ (see Assumption~\eqref{simple}).
\end{proof}

Note that $\zeta^2 = Id$, $\eta^2 = Id. $\nr{ and that $\zeta$ (resp. $\eta$) leaves the first (resp. second) coordinate invariant (see Figure~\ref{automorphismes})} By the parameterisation \nb{\eqref{parametrisation}}, \nr{made in Proposition~\ref{x(s)}, }$\zeta$ and $\eta$ can be regarded as reflections \nb{(see \eqref{eta_zeta})}, and their composition as a translation \nb{(see \eqref{eq:etazetan}).} \nr{This is the purpose of the following proposition.}

\begin{figure}[hbtp]
\centering
\includegraphics[scale=5]{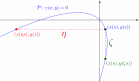}
\vspace{0cm}
\caption{Parabola $\mathcal{P}$ and automorphisms $\eta$ and $\zeta$.}
\label{automorphismes}
\end{figure}

\begin{lemma}[Explicit form of $(a_n, b_n)$]\label{anbn}
\nr{Let $(a_0, b_0) \in \mathcal{P}$. }\nb{Let $s\in \R$ and $(a_0, b_0) = (x(s),y(s))$.} For any integer $n \in \Z$, we define
\nr{$$(a_{2n}, b_{2n}) = (\eta\zeta)^n(a_0,b_0), \quad (a_{2n+1}, b_{2n+1})  =  \zeta(\eta\zeta)^n(a_0,b_0)$$}
\nb{$$(a_{2n}, b_{2n}) = \left(x((\eta\zeta)^ns),y((\eta\zeta)^ns)\right), \quad (a_{2n+1}, b_{2n+1})  = \left(x(\zeta(\eta\zeta)^ns), y(\zeta(\eta\zeta)^ns)\right)$$}
(see Figure \ref{autoo1}). Then, for any $n \in \Z$ the following expressions hold.
\begin{equation}
a_{2n} = -2n^2 +  2(a_0 - b_0 - \mu_2)n + a_0, \ \ a_{2n+1} = a_{2n}
\end{equation}
\begin{equation}b_{2n} = -2n^2 + 2(a_0 - b_0 + \mu_1) n + b_0, \ \ b_{2n+1} = b_{2n+2}.
\end{equation}
\end{lemma}

\begin{proof}
The invariance of the first and the second coordinate of $\zeta$ and $\eta$ respectively implies equalities $a_{2n+1} = a_{2n}$ and $b_{2n+1} = b_{2n+2}$. \nr{For the}\nb{The} explicit expressions of $a_{2n}$ and $b_{2n}$\nr{, they} are \nb{obtained from the explicit expression \eqref{eq:etazetan}.} \nr{found by setting up and solving an elementary linear recurrence relation of second order.}
\end{proof}

\begin{notation} \label{not:1} \label{not:2} \nb{For $s\in\R$, we define}
\begin{equation}
z(s) = (x(s), y(s))
\end{equation}
\nb{as the point of the parabola corresponding to the parameter $s \in \R.$} We also define
\begin{equation}
s_{max} = \mu_2, \ \ s_{min}  = -\mu_1
\end{equation}
\nr{and use the notation}\nb{and write} $\gamma_i(s)$ \nb{instead of} $\gamma_i(x(s), y(s))$ for $i=1, 2$. \nr{For $s \in \R$, we also denote $\zeta s$ (resp. $\eta s$) the real number satisfying $\zeta z(s) = z(\zeta s)$ (resp. $\eta z(s) = z(\eta s)$).}\ngray{This notation is no longer necessary since $\eta$ and $\zeta$ only act on $s \in \R$.} \nr{We also define}\nb{Finally, let} $s^*$ and $s^{**}$ \nb{be defined} as
\begin{equation}\label{s*}
s^* = \frac{2}{r_{2} + 1}, \ \ \ s^{**} =  \frac{-2}{r_{1} + 1}.
\end{equation}
\end{notation}
With these notations, $$z(s_{max}) = (x_{max}, Y^{\pm}(x_{max}))\quad \textnormal{and}\quad z(s_{min}) = (X^\pm(y_{max}), y_{max}).$$ Note that \nb{the curve} $(z(s))_{s \in [s_{min}, s_{max}]}$ is the \nr{part}\nb{portion} of the parabola from $(x_{max}, Y^+(x_{max}))$ to $(X^+(y_{max}), y_{max})$ going counterclockwise (see Figure~\ref{fig:x*y*x**y**}).
Furthermore, $x(s^*) = x^*$ and $y(s^{**}) = y^{**}$ with definition~\eqref{eq:defx*y**}. 
We can now provide explicit expressions for the Laplace transforms $\phi_1$ and $\phi_2.$

\subsection{Explicit expression of Laplace transforms via the compensation approach}\label{subsec:4.3}

\begin{theorem}[Explicit expressions for Laplace transforms]\label{harmoniques}
Let $z_0 \in \R_+^2\setminus\{(0,0)\}$ be the initial condition. Then, for any $s \in (\max(s_{min}, s^{**}), \min(s_{max}, s^*))$,%
\nr{\begin{align}\phi_2(x(s)) &=  \frac{-1}{\gamma_2(\zeta s)}e^{z_0\cdot z(\zeta s)} + \sum_{n=1}^{+\infty} \left[\prod_{k=0}^{n-1} G(s+2k)\right]\left[  \frac{e^{z_0\cdot z(s + 2n)}}{\gamma_2(s + 2n)} - \frac{e^{z_0\cdot z(\zeta (s+2n))}}{\gamma_2(\zeta (s+2n))}\right] 
\end{align}} 
\nb{\begin{align}\label{explicite}\phi_2^{z_0}(x(s)) &=  \frac{-1}{\gamma_2(\zeta s)}e^{z_0\cdot z(\zeta s)} + \sum_{n=1}^{+\infty} \left[\prod_{k=0}^{n-1} G(s-2k)\right]\left[  \frac{e^{z_0\cdot z(s - 2n)}}{\gamma_2(s - 2n)} - \frac{e^{z_0\cdot z(\zeta (s-2n))}}{\gamma_2(\zeta (s-2n))}\right] 
\end{align}}
where 
\begin{equation}\label{eq:G} 
    G(s) = \nr{\frac{\frac{\gamma_1}{\gamma_2}(\zeta s)}{\frac{\gamma_1}{\gamma_2}(s + 2)}}\nb{\frac{\frac{\gamma_1}{\gamma_2}(\zeta s)}{\frac{\gamma_1}{\gamma_2}(s - 2)}}.
\end{equation}

\color{black}
Similarly, for all $s \in (\max(s_{min}, s^{**}), \min(s_{max}, s^*))$ 
\nr{\begin{align}\phi_1(y(s)) &=  \frac{-1}{\gamma_1(\eta s)}e^{z_0\cdot z(\eta s)} + \sum_{n=1}^{+\infty} \left[\prod_{k=0}^{n-1} \tilde G(s-2k)\right]\left[  \frac{e^{z_0\cdot z(s - 2n)}}{\gamma_1(s - 2n)} - \frac{e^{z_0\cdot z(\eta (s-2n))}}{\gamma_1(\eta (s-2n))}\right] 
\end{align}}
\nb{\begin{align}\label{explicite2}\phi_1^{z_0}(y(s)) &=  \frac{-1}{\gamma_1(\eta s)}e^{z_0\cdot z(\eta s)} + \sum_{n=1}^{+\infty} \left[\prod_{k=0}^{n-1} \tilde G(s+2k)\right]\left[  \frac{e^{z_0\cdot z(s + 2n)}}{\gamma_1(s + 2n)} - \frac{e^{z_0\cdot z(\eta (s+2n))}}{\gamma_1(\eta (s+2n))}\right] 
\end{align}}
where $$\tilde G(s) =  \nr{ \frac{\frac{\gamma_2}{\gamma_1}(\eta s)}{\frac{\gamma_2}{\gamma_1}(s - 2)}}\nb{\frac{\frac{\gamma_2}{\gamma_1}(\eta s)}{\frac{\gamma_2}{\gamma_1}(s + 2)}}.$$ 
\end{theorem}

Before \nr{applying the compensation method}\nb{proving Theorem~\ref{harmoniques}}, we establish a technical lemma.
\begin{lemma}\label{lem:gamma(s)nonnul}
    For all $n \geq 1$ and $s \in (\max(s_{min}, s^{**}), \min(s_{max}, s^*))$, we have \nr{$\gamma_1(s + 2n) \neq 0$}\nb{$\gamma_1(s - 2n) \neq 0$} and \nr{$\gamma_2(\zeta(s + 2n)) \neq 0$}\nb{$\gamma_2(\zeta(s - 2n)) \neq 0$}. Furthermore, $\gamma_2(s), \gamma_2(\zeta s), \gamma_1(s), \gamma_1(\eta s)$ are also non-zero.
\end{lemma}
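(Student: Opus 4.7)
The plan is to use the parameterisation of $\mathcal{P}$ from Proposition~\ref{x(s)} to write $\gamma_1(\cdot)$ and $\gamma_2(\cdot)$ as explicit quadratic polynomials in $s$, then read off their real zeros and locate them against the interval $I := (\max(s_{\min}, s^{**}), \min(s_{\max}, s^*))$ and its forward translates.

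Substituting $x(s) = -\tfrac{1}{2}s(s - 2\mu_2)$ and $y(s) = -\tfrac{1}{2}s(s + 2\mu_1)$ into $\gamma_2(x,y) = r_2 x + y$ and $\gamma_1(x,y) = x + r_1 y$, and using $\mu_1 + \mu_2 = 1$ from~\eqref{simple}, I would obtain
\begin{equation*}
\gamma_2(s) \;=\; -\tfrac{r_2+1}{2}\, s\,(s - \zeta s^*), \qquad \gamma_1(s) \;=\; -\tfrac{r_1+1}{2}\, s\,(s - \eta s^{**}),
\end{equation*}
with $\zeta s^* = 2 s_{\max} - s^*$ and $\eta s^{**} = 2 s_{\min} - s^{**}$ coming from~\eqref{eta_zeta}. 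The real zero sets are thus $\{0, \zeta s^*\}$ for $\gamma_2$ and $\{0, \eta s^{**}\}$ for $\gamma_1$; since $\zeta$ and $\eta$ are involutions, the map $s \mapsto \gamma_2(\zeta s)$ vanishes exactly on $\{2 s_{\max}, s^*\}$, and $s \mapsto \gamma_1(\eta s)$ on $\{2 s_{\min}, s^{**}\}$.

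The non-vanishing claims are then checked by positioning these roots against $I$. For $\gamma_2(\zeta s)$ and $\gamma_1(\eta s)$ on $I$: both roots $\{2 s_{\max}, s^*\}$ (resp.\ $\{2 s_{\min}, s^{**}\}$) lie at or beyond the endpoint $\min(s_{\max}, s^*)$ (resp.\ $\max(s_{\min}, s^{**})$) of the open interval $I$, so they are avoided. For $\gamma_1(s + 2n)$ and $\gamma_2(\zeta(s+2n))$ with $n \geq 1$: the lower bound $s > s_{\min} = -\mu_1 > -1$ (from~\eqref{simple}, $\mu_1 \in (0, 1)$) yields $s + 2n > 1 > 0$, eliminating the root $0$. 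For the root $2 s_{\max} = 2\mu_2 < 2$: the equation $s + 2n = 2\mu_2$ would force $s = 2\mu_2 - 2n \leq 2\mu_2 - 2 < \mu_2 - 1 = s_{\min}$, hence $s \notin I$. The remaining roots $s + 2n = \eta s^{**}$ and $s + 2n = s^*$ are handled by splitting on the pole conditions $(r_1+1)\mu_1 \gtrless 2$ and $(r_2+1)\mu_2 \gtrless 2$: in each pole case the relevant root lies on the wrong side of $s_{\min}$ or $s_{\max}$ and is out of reach of forward translations of $I$; in each non-pole case a direct comparison using the definitions~\eqref{s*} and~\eqref{vectors} pushes the root out of $I$.

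The main obstacle will be the non-pole subcase of the translated claim, where the roots $\eta s^{**}$ (resp.\ $\zeta s^*$) can be positive and in principle fall into $s + 2n + I$ for exceptional parameter choices; the width estimate $\min(s_{\max}, s^*) - \max(s_{\min}, s^{**}) \leq s_{\max} - s_{\min} = 1 < 2$ restricts the possible collisions to at most one value of $n$, and Assumption~\eqref{vectors} combined with~\eqref{simple} rules this one out. The same polynomial analysis then directly yields the remaining four claims $\gamma_2(s), \gamma_2(\zeta s), \gamma_1(s), \gamma_1(\eta s) \neq 0$ by comparing $I$ against the discrete zero sets computed above.
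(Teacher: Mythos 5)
Your approach is genuinely different from the paper's: you parameterise the parabola and factor $\gamma_1(\cdot)$, $\gamma_2(\cdot)$ as explicit quadratics in $s$, then track their four real roots $\{0,\eta s^{**}\}$ and $\{2s_{\max}, s^{*}\}$ against translates of $I$; the paper instead works geometrically, defining arcs $E^{\pm}$ of the parabola, arguing that the reflection hypothesis keeps the lines $\{\gamma_i=0\}$ off them, and that the iterates of the automorphisms stay there. Your factorisations $\gamma_2(s)=-\tfrac{r_2+1}{2}s(s-\zeta s^{*})$, $\gamma_1(s)=-\tfrac{r_1+1}{2}s(s-\eta s^{**})$, and the computed root sets of $\gamma_2(\zeta\cdot)$, $\gamma_1(\eta\cdot)$ are all correct.

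The gap is exactly in the ``main obstacle'' you flag, and your width argument does not close it. Take $\mu_1=\mu_2=\tfrac12$ and $r_1=r_2=0$ (so $s^{*}=2$, $s^{**}=-2$, $s_{\min}=-\tfrac12$, $s_{\max}=\tfrac12$, $I=(-\tfrac12,\tfrac12)$; note $(r_2+1)\mu_2=\tfrac12\le 2$, the non-pole case). At $s=0\in I$ and $n=1$ we get $s+2=2=s^{*}$, so $\zeta(s+2)=2\mu_2-2=\zeta s^{*}=-1$ and $\gamma_2(\zeta(s+2))=\gamma_2(z(-1))=\gamma_2(-1,0)=0$. So the conclusion you claim fails. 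A similar collision with $\eta s^{**}$ occurs for $\gamma_1(s+2n)$ (e.g.\ $\mu_1=\mu_2=\tfrac12$, $r_1=-0.3$). Your bound $s+2n>s_{\min}+2>1$ eliminates the roots $0$ and $2s_{\max}<2$, but the two rational roots $s^{*}=\tfrac{2}{r_2+1}$ and $\eta s^{**}=\tfrac{2}{r_1+1}-2\mu_1$ are unbounded above as the reflection parameters approach $-1$, and Assumptions~\eqref{vectors}--\eqref{simple} alone do not keep them out of $(s_{\min}+2n,\, s_{\max}+2n)$.

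What is actually going on is a sign inconsistency in the paper that you inherited. From the first two lines of~\eqref{eta_zeta}, $\eta(\zeta(s))=-(-s+2\mu_2)-2\mu_1=s-2$, so $(\eta\zeta)^n(s)=s-2n$, not $s+2n$ as the third line of~\eqref{eta_zeta} and the lemma's statement assert (Lemma~\ref{anbn}'s explicit formula $a_{2n}=-2n^2+2(a_0-b_0-\mu_2)n+a_0$ indeed equals $x(s-2n)$, not $x(s+2n)$). With the corrected shift $s-2n$, your polynomial analysis proves the statement immediately: for $s\in I$ and $n\ge 1$ we have $s-2n<s_{\max}-2=\mu_2-2<-1$, while the four roots all satisfy $0,\ 2s_{\max},\ s^{*}\ge 0$ and $\eta s^{**}=\tfrac{2}{r_1+1}-2\mu_1>-2\mu_1>-1$, so none can equal $s-2n$. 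So do retain your factorisation; replace the final step (the translate bound and the implicit $+2n$) by this one-line comparison, which is at the same time more elementary and more transparent than the paper's $E^{\pm}$ argument.
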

\begin{proof}
    We define \nb{two portions of the parabola $E^+$ and $E^-$ given by} $$E^+ = \{(x, Y^+(x)), \; x \leq X^+(0)\} \quad \textnormal{and} \quad E^- = \{(X^-(y)), y), \; y \leq Y^-(0)\}.$$  By Assumption~(\ref{vectors}), the line $\{\gamma_2 = 0\}$ (resp. $\{\gamma_1 = 0\}$) cannot pass through $E^-$ (resp. $E^+$). Additionally, note that $\eta(E^-) \subset E^+$ and $\zeta(E^+) \subset E^-$. Since $s \in (s_{min}, s_{max})$, $z((\eta\zeta)^n s) = \nr{z(s + 2n)}\nb{z(s - 2n)}$ belongs to $E^+$ for all $n \geq 1$. Thus, $\nr{\gamma_2(\zeta(s + 2n)) \neq 0}\nb{\gamma_2(\zeta(s - 2n)) \neq 0}$ for any $n \geq 0$. By similar reasoning, $\nr{\gamma_1(s + 2n) \neq 0}\nb{\gamma_1(s - 2n) \neq 0}$ for any $n\geq0$. The last statement comes from the fact that $s \in (s^{**}, s^*).$
\end{proof}
\begin{proof}[Proof of Theorem~\ref{harmoniques}] \ngray{I didn't put in red and blue the change of sign in for $s\pm 2k$ because of the amount of those in this proof, but I closely paid attention.}
\nr{We now apply the compensation method}\nb{The main idea of the proof is to get a recursive formula for Laplace transforms}. To do this, we rewrite the functional equation~(\ref{Equation fonctionnelle}) in $z(\zeta s)$ and $z(\eta \zeta s) = z(s - 2)$, which holds because $x(\zeta s), y(\zeta s), x(s-2)$ and $y(s-2)$ are negative:
$$
\left\{
    \begin{array}{ll}
         0 = \gamma_1(\zeta s) \phi_1(y(\zeta s)) + \gamma_2(\zeta s) \phi_2(x(\zeta s)) + e^{z_0\cdot z(\zeta s)}  \\
	   0 = \gamma_1(s - 2) \phi_1(y(s -2)) + \gamma_2(s - 2) \phi_2(x(s - 2)) + e^{z_0 \cdot z(s-2)}.
    \end{array}
\right. 
$$
By the invariance of $\phi_2$ (resp. $\phi_1$) under $\zeta$ (resp. $\eta$), we have $\phi_2(x(\zeta s)) = \phi_2(x(s))$ and $\phi_1(y(s-2)) = \phi_1(y(\zeta s))$.
Then, by eliminating $\phi_1(y(\zeta s))$ from the equations \nb{(which is possible by Lemma~\ref{lem:gamma(s)nonnul})}, we obtain:
\begin{align}
\phi_2(x(s)) &= \frac{\frac{\gamma_1}{\gamma_2}(\zeta s)}{\frac{\gamma_1}{\gamma_2}(s-2)} \phi_2(x(s - 2))+\left[\frac{\frac{\gamma_1}{\gamma_2}(\zeta s)e^{z(s-2)\cdot z_0}}{\gamma_1(s-2)} - \frac{e^{z(\zeta s)\cdot z_0}}{\gamma_2(\zeta s)} \right]\\
&= \label{1pas} G(s)\phi_2(x(s - 2))+ \left[\frac{G(s)}{\gamma_2(s - 2)}e^{z_0\cdot z(s - 2)} - \frac{e^{z_0\cdot z(\zeta s)}}{\gamma_2(\zeta s)}\right].
\end{align}
Similarly, we get:
\begin{equation}
\phi_2(x(s-2)) = G(s-2)\phi_2(x(s - 4))+ \left[\frac{G(s-2)}{\gamma_2(s - 4)}e^{z_0\cdot z(s - 4)} - \frac{e^{z_0\cdot z(\zeta (s-2))}}{\gamma_2(\zeta (s-2))}\right].
\end{equation}
Substituting this into~\eqref{1pas}, we get $$\phi_2(x(s)) = G(s)G(s - 2)\phi_2(x(s - 4)) + G(s)G(s - 2)\frac{e^{z_0\cdot z(s - 4)}}{\gamma_2(s-4)} - G(s)\frac{e^{z_0\cdot z(\zeta(s-2))}}{\gamma_2(\zeta(s-2))} + \frac{G(s)}{\gamma_2(s -2)}e^{z_0\cdot z(s - 2)} - \frac{e^{z_0\cdot z(\zeta s)}}{\gamma_2(\zeta s)}.$$
Then, by induction on $N$, we obtain the following equality for all $N \geq 1$:
\nr{\begin{equation}
\phi_2(x(s)) =\left[\prod_{k=1}^N G(s + 2k)\right]\phi_2(x(s + 2(N + 1))) -  \frac{e^{z_0\cdot z(\zeta s)}}{\gamma_2(\zeta s)} + \sum_{n=1}^{N} \left[\prod_{k=0}^{n-1} G(s+2k)\right]\left[  \frac{e^{z_0\cdot z(s + 2n)}}{\gamma_2(s + nq)} - \frac{e^{z_0\cdot z(\zeta (s+2n))}}{\gamma_2(\zeta (s+2n))}\right]. 
\end{equation}}
\ngray{(correction of an index and adjustment by a boundary term $\left[\prod_{k=0}^{N} G(s-2k)\right] \frac{e^{z_0\cdot z(s - 2(N+1))}}{\gamma_2(s - 2(N+1))} $. This additional term tends to $0$ as $N\to+\infty$ because of the exponential term. Note that this term, at rank $N$, is part of the big sum at rank $N+1$.}
\nb{\begin{equation}
\phi_2(x(s)) =\left[\prod_{k=0}^N G(s - 2k)\right]\phi_2(x(s - 2(N + 1))) -  \frac{e^{z_0\cdot z(\zeta s)}}{\gamma_2(\zeta s)} +\left[\prod_{k=0}^{N} G(s-2k)\right] \frac{e^{z_0\cdot z(s - 2(N+1))}}{\gamma_2(s - 2(N+1))} 
\end{equation}
$$+ \sum_{n=1}^{N} \left[\prod_{k=0}^{n-1} G(s-2k)\right]\left[  \frac{e^{z_0\cdot z(s - 2n)}}{\gamma_2(s - 2n)} - \frac{e^{z_0\cdot z(\zeta (s-2n))}}{\gamma_2(\zeta (s-2n))}\right] $$}
The proof is then reduced to proving the following limit:
\begin{equation}\label{le_reste}
\left[\prod_{k=0}^n G(s - 2k)\right] \phi_2(x(s - 2(n + 1))) \underset{n\to+\infty}{\longrightarrow} 0.
\end{equation}
\nr{To justify this, note that the explicit formula of $G$ and the Lemma \ref{anbn} give constants $a, b, c, d, e, f, g, h$ such that 
\begin{equation}G(s + 2k) = \frac{(k^2 + ak + b)(k^2 + ek + f)}{(k^2 + ck + d)(k^2 + gk + f)}.
\end{equation}
We have in particular $$a = \frac{\beta +4r_{1} + \beta'r_{1}}{1 + r_{1}}, \quad c = \frac{\beta +4r_{1} + \beta'r_{1} + 4}{1 + r_{1}} $$
$$e = \frac{\beta' +r_{2}\beta'}{1 + r_{2}},\quad g = \frac{\beta' +r_{2}\beta + 4}{1 + r_{2}} $$
with the notation $\beta = - x(s) + y(s) + \mu_2$, $\beta' = -x(s) + y(s) - \mu_1$. Then,
\begin{equation}\label{exposant}
a - c + e - g =- 4 \left(\frac{1}{1 + r_{1}}+ \frac{1}{1 +  r_{2}}\right)
\end{equation} and $a-c + e-g < 0$ by Assumption (\ref{vectors}).}
\ngray{I noticed a simplification of $G$, since $x(s)$ and $y(s)$ have $s$ is factor. I also rectified an exponent.}
\nb{To justify this, note using formula~\eqref{eq:G} and Lemma~\ref{anbn} that:
\begin{equation}G(s - 2k) = \frac{(k+a)(k+ b)}{(k+ c)(k+ d)}.
\end{equation}
for some constants $a, b, c, d$ defined by: $$a =\frac{-s}{2} + \frac{r_{1}}{1 + r_{1}}, \quad b = 1 -\frac{s}{2} + \frac{\mu_2r_2 - \mu_1}{1 + r_{2}} $$
$$c = \frac{-s}{2} + \frac{1}{1 + r_{2}}, \quad d = 1 -\frac{s}{2} + \frac{\mu_2 - \mu_1r_1}{1 + r_{1}}.$$
By elementary considerations, the following asymptotic behavior holds: 
\begin{equation}\label{prodG}
\left[\prod_{k=0}^n G(s + 2k)\right]\underset{n\to\infty}{\sim}Cn^{a - c + b - d}
\end{equation} where $C$ is a real constant. Moreover,
\begin{equation}\label{exposant}
a + b - c - d = 2 - 2 \left(\frac{1}{1 + r_{1}}+ \frac{1}{1 +  r_{2}}\right)
\end{equation}
since $\mu_1 + \mu_2 = 1$.} \nr{The inequality $|\phi(s + (n+1)q)| \leq \phi(0) < +\infty$ concludes.}\nb{Then, the exponential decay in \eqref{eq:lap_decroit} for $\phi_2$, together with the polynomial rate of expression \eqref{prodG}, yields \eqref{explicite}. Note that inequality \eqref{eq:lap_decroit} is the only (and crucial) reason why we work under Assumption~\eqref{vectors}.} Equation~\eqref{explicite2} is obtained with symmetric arguments.
\end{proof}

\nb{\begin{rem} The exponent given by \eqref{prodG} is exactly the parameter $-2\gamma$ introduced in \cite{Dreyfus2025}, which determines the algebraic nature of the Laplace transforms for the same degenerate particle model in the recurrent case. Furthermore, the constants $\kappa_m = \kappa_m(\alpha)$ in \eqref{harm1} satisfy
\begin{equation}
\kappa_m\underset{m\to\pm\infty}{\sim} C_\pm m^{-2\gamma - 2}.
\end{equation}
for some constant $C_\pm > 0$ where $-2\gamma - 2 < 0$ by (\ref{vectors}).
\end{rem}}
\ngray{I also added this little remark about the decrease of the coefficients, which makes a link with an upcoming article.}
\color{gray}
\color{black}


In \eqref{explicite} (resp.~\eqref{explicite2})  $\phi_2$ (resp. $\phi_1$) is not \nr{expressed in the variable}\nb{given as a function of} $x$ (resp. $y$) but \nr{in}\nb{of} $s$. We therefore establish the following corollary.

\begin{cor} 
\label{decroit_im}
The following expressions hold in the domains $Re(x)< x_{max}$ and $Re(y) < y_{max}$, respectively: 
\begin{equation} \label{phi2decroit}
\phi_2(x) =  \frac{-1}{\gamma_2(x, Y^-(x))}e^{z_0\cdot (x, Y^-(x))}\quad\quad\quad\quad\quad\quad\quad\quad\quad\quad\quad\quad\quad\quad\quad\quad\quad\quad\quad\quad\quad\quad\quad
\end{equation}
$$\quad\quad + \sum_{n=1}^{+\infty} \left[\prod_{k=1}^{n}  \frac{\frac{\gamma_1}{\gamma_2}(\psi_{2k-1}(x, Y^+(x))}{\frac{\gamma_1}{\gamma_2}(\psi_{2k}(x, Y^+(x)))}\right]\left[ \frac{e^{z_0\cdot \psi_{2n}(x, Y^+(x))}}{\gamma_2(\psi_{2n}(x, Y^+(x))} - \frac{e^{z_0\cdot \psi_{2n+1}(x, Y^+(x))}}{\gamma_2(\psi_{2n +1}(x, Y^+(x))}\right]$$
\begin{equation} \label{phi1decroit}
\phi_1(y) =  \frac{-1}{\gamma_1(X^+(y), y)}e^{z_0\cdot (X^+(y), y)}\quad\quad\quad\quad\quad\quad\quad\quad\quad\quad\quad\quad\quad\quad\quad\quad\quad\quad\quad\quad\quad\quad\quad
\end{equation}
$$\quad\quad\quad\quad\quad+ \sum_{n=1}^{+\infty} \left[\prod_{k=1}^{n}  \frac{\frac{\gamma_2}{\gamma_1}(\psi_{-2k+1}(X^+(y), y))}{\frac{\gamma_2}{\gamma_1}(\psi_{-2k}(X^+(y), y))}\right]\left[ \frac{e^{z_0\cdot \psi_{-2n}(X^+(y), y))}}{\gamma_1(\psi_{-2n}(X^+(y), y))} - \frac{e^{z_0\cdot \psi_{-2n+1}(X^+(y), y)}}{\gamma_1(\psi_{-2n +1}(X^+(y), y))}\right] $$
where
$$\psi_{2n}(a,b) = \left(-2n^2 +  2(a - b - \mu_2)n + a,  -2n^2 + 2(a - b + \mu_1) n + b\right)$$
and
$$\psi_{2n+1}(a,b) = \left( -2n^2 +  2(a - b - \mu_2)n + a,  -2(n+1)^2 + 2(a - b + \mu_1) (n+1) + b\right).$$
\end{cor}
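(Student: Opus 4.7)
The plan is to recast the formula of Theorem~\ref{harmoniques}, stated in the parameter $s$, as an identity in the variable $x$, and then to extend it from a real subinterval to the whole half-plane $\{\Re(x)<x_{max}\}$ by meromorphic continuation. For $s\in(\max(s_{min},s^{**}),\min(s_{max},s^*))$ (a non-empty open interval, since $s^{**}<0<s^*$ and $s_{min}<0<s_{max}$), the parameterization of Proposition~\ref{x(s)} combined with the fact that $y(s)=Y^+(x(s))$ for $s\in(s_{min},s_{max})$ gives $z(s)=(x,Y^+(x))$ with $x=x(s)$, hence $z(\zeta s)=(x,Y^-(x))$, which reproduces the leading term of~\eqref{phi2decroit}.

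For the iterated points I would invoke Lemma~\ref{anbn} with starting point $(a_0,b_0)=(x,Y^+(x))$: the explicit formulas there for $(a_{2n},b_{2n})$ and $(a_{2n+1},b_{2n+1})$ coincide term-by-term with the definitions of $\psi_{2n}(x,Y^+(x))$ and $\psi_{2n+1}(x,Y^+(x))$ given in the statement of the corollary, and they can be identified with $z(s+2n)$ and $z(\zeta(s+2n))$. Rewriting the factor $G(s+2k)=\frac{(\gamma_1/\gamma_2)(\psi_{2k+1})}{(\gamma_1/\gamma_2)(\psi_{2k+2})}$ and reindexing $j=k+1$, the product $\prod_{k=0}^{n-1}G(s+2k)$ becomes $\prod_{j=1}^{n}\frac{(\gamma_1/\gamma_2)(\psi_{2j-1})}{(\gamma_1/\gamma_2)(\psi_{2j})}$, so Theorem~\ref{harmoniques} specialises to~\eqref{phi2decroit} on an open real interval of $(-\infty,x_{max})$.

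The next step is to show that the series in~\eqref{phi2decroit} converges absolutely and uniformly on every compact subset of $\{\Re(x)<x_{max}\}$, hence defines a meromorphic function there. Two ingredients are used. First, by the same elementary estimate as at the end of the proof of Theorem~\ref{harmoniques}, the partial products $\prod_{k=1}^{n}(\gamma_1/\gamma_2)(\psi_{2k-1})/(\gamma_1/\gamma_2)(\psi_{2k})$ grow at most polynomially in $n$, with exponent $-4/(1+r_1)-4/(1+r_2)<0$. Second, both coordinates of $\psi_{2n}(x,Y^+(x))$ have leading term $-2n^2$ as $n\to\infty$, so $|\exp(z_0\cdot\psi_{2n}(x,Y^+(x)))|$ decays like $\exp(-cn^2)$ whenever $a_0+b_0>0$; the degenerate case $z_0=0$ is handled by the extra quadratic decay $1/\gamma_2(\psi_{2n})\sim -1/(2(1+r_2)n^2)$, which combined with the polynomial decay of the products still gives an absolutely convergent series.

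Finally, since $\phi_2$ is meromorphic on $\{\Re(x)<x_{max}+\delta\}\setminus[x_{max},x_{max}+\delta]$ by Corollary~\ref{coro:cont} and both sides of~\eqref{phi2decroit} agree on the real open interval from the second step, the identity theorem for meromorphic functions extends the equality to the entire half-plane $\{\Re(x)<x_{max}\}$. The formula~\eqref{phi1decroit} for $\phi_1$ then follows by the symmetric argument using the dual formula of Theorem~\ref{harmoniques}. The main technical obstacle will be the uniform convergence step: one must control the polynomial growth of the products and the super-quadratic decay of the exponentials uniformly as $x$ ranges over a compact subset, while locally accommodating the finitely many poles where $\gamma_2(\psi_{2n}(x,Y^+(x)))$ vanishes.
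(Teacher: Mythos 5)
Your proposal is correct and follows essentially the same route as the paper: translate the $s$-parameterised formula of Theorem~\ref{harmoniques} into the variable $x$ via Lemma~\ref{anbn} and the identification $(a_{2k+1},b_{2k+1})=\psi_{2k+1}(x,Y^+(x))$, $(a_{2k},b_{2k})=\psi_{2k}(x,Y^+(x))$, establish the identity on the real arc $\{(x(s),y(s))\}$, and extend to $\{\Re(x)<x_{max}\}$ by meromorphic continuation using Corollary~\ref{coro:cont}. You supply a more careful justification of the local uniform convergence of the series (needed for the right-hand side to be meromorphic and the identity theorem to apply), a step the paper leaves implicit; this is a welcome addition rather than a departure.
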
 

\begin{proof} 
By Lemma~\ref{anbn} and equalities $z(s) = (x(s), Y^+(x(s)) = (X^+(y(s)), y(s))$ for $s \in (s_{min}, s_{max})$, equations \eqref{phi2decroit} and \eqref{phi1decroit} hold on  the curve $\{(x,y)=(x(s), y(s)) : s \in \big( (\max(s_{min}, s^{**}), \min(s_{max}, s^*)) \big) \} $. By 
Corollary~\ref{coro:cont}, Laplace transforms $\phi_2(x)$ and $\phi_1(y)$ are  meromorphic on $Re(x) < x_{max}$ and $Re(y) < y_{max}$ respectively. Consequently, the explicit expressions \eqref{phi2decroit} and \eqref{phi1decroit} remain valid in these domains.
\end{proof} 


\color{black}
\section{Laplace inverse and saddle-point method}\label{sec:6}
To avoid certain technical complications, we first derive the asymptotic behavior of the Green functions $g^{z_0}$ for $z_0 \neq 0$, and later address the case $z_0 = 0$ with additional arguments. 
 
\subsection{Inverse laplace theorem and saddle-point.}

Let $z_0 \neq (0,0)$ be a starting point of the process. The inverse Laplace transform formula (see \cite[Theorem 24.3 and 24.4]{doetsch_introduction_1974} and \cite{brychkov_multidimensional_1992}) yields the following representation for $g^{z_0}(a, b)$: for $\epsilon>0$ sufficiently small,
\begin{align}\label{eq:6.1}
g^{z_0}(a,b)&= \frac{1}{(2\pi i)^2 } \int_{-\epsilon-i \infty}^{-\epsilon+ i \infty} \int_{-\epsilon - i \infty }^{-\epsilon+ i \infty} \phi^{z_0}(x,y) \exp(-a x -by) dxdy
\end{align}
where the convergence is in the sense of principal value. This can be justified by the functional equation~\eqref{Equation fonctionnelle} and the decay properties of the Laplace transforms established in Proposition~\ref{lap_dec_inv}.

\color{black}


\begin{lemma}[From double to simple integrals]\label{3_integ}
Denote by $z_0=(a_0,b_0)$ the starting point of the process. Then, for any $(a,b)\in \mathbb{R}_+^2$ satisfying $a > 0$ or $b>0$,
$$g(a,b)= I_1(a,b)+ I_2(a,b) + I_3(a,b)$$ where 
$$ I_1(a,b)= \frac{1}{2 \pi i } \int_{-\epsilon - i \infty}^{- \epsilon + i \infty }
 \phi_2 (x) \gamma_2 (x, Y^{+} (x))
 \exp( -a x - b Y^{+}(x)) \frac{dx}{\partial_y\gamma(x, Y^{+}(x)) }, $$
$$ I_2(a,b)= \frac{1}{2 \pi i } \int_{-\epsilon - i \infty}^{- \epsilon + i \infty }
 \phi_1 (y) \gamma_1 ( X^{+} (y), y)  \exp( -a X^{+}(y)-by) \frac{dy}{\partial_x\gamma( X^{+}(y), y)}, $$
$$ I_3(a,b)= \frac{1}{2\pi i }    \int_{-\epsilon - i \infty}^{- \epsilon + i \infty }
 \exp(a_0 x + b_0 Y^{+}(x)) 
 \exp( -a x - b Y^{+}(x)) \frac{dx}{\partial_y\gamma(x, Y^{+}(x)) } \ \  \hbox{ if }b>b_0, $$
 $$ 
 I_3(a,b)= \frac{1}{2\pi i }  \int_{-\epsilon - i \infty}^{- \epsilon + i \infty }
 \exp(a_0 X^{+}(y)  + b_0 y ) 
 \exp( -a X^{+}(y) - b y ) \frac{dy}{\partial_x\gamma(X^{+}(y), y) } \ \  \hbox{ if }a>a_0.
 $$   
 \label{lem:I123}
\end{lemma}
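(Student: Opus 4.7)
The plan is to substitute the functional equation of Proposition~\ref{functional} into the double-integral representation~\eqref{eq:6.1}. Writing
$$\varphi^{z_0}(x,y) = -\frac{\gamma_1(x,y)\varphi_1(y) + \gamma_2(x,y)\varphi_2(x) + e^{(x,y)\cdot z_0}}{\gamma(x,y)},$$
this splits the right-hand side of~\eqref{eq:6.1} into three pieces $J_1 + J_2 + J_3$ corresponding to the three summands in the numerator. The estimates from Proposition~\ref{lap_dec_inv}, combined with the linear growth of $\gamma_1, \gamma_2$ and the quadratic growth of $\gamma$, yield absolute integrability of each piece, so Fubini applies and I can freely evaluate the two integrations in either order.

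To obtain $I_1$, I would freeze $x$ with $\Re(x)=-\epsilon$ and compute the inner $y$-integral
$$\frac{1}{2\pi i}\int_{-\epsilon-i\infty}^{-\epsilon+i\infty}\frac{\gamma_2(x,y)}{\gamma(x,y)}\,e^{-by}\,dy$$
by residues. Since $\gamma(x,y)$ is quadratic in $y$ with roots $Y^\pm(x)$, Lemma~\ref{lemXYY} ensures that, for $\epsilon$ small enough, $\Re Y^-(x) < -\epsilon$ while $\Re Y^+(x) > -\epsilon$. The factor $e^{-by}$ decays as $\Re(y)\to +\infty$ (using $b>0$), so I close the contour to the right; together with $\gamma_2/\gamma = O(1/|y|)$ at infinity this makes the large-arc contribution vanish, and the residue theorem (with the induced clockwise orientation) yields
$$-\,\frac{\gamma_2(x,Y^+(x))}{\gamma'_y(x,Y^+(x))}\,e^{-bY^+(x)}.$$
Substituting back into the outer $x$-integral and simplifying signs reproduces $I_1(a,b)$ exactly. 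The derivation of $I_2(a,b)$ is entirely symmetric, integrating in $x$ first and picking up the pole of $1/\gamma(\cdot,y)$ at $X^+(y)$.

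The third piece $J_3$ is handled by the same residue trick but with a case distinction dictated by whether the exponential $e^{(a_0-a)x+(b_0-b)y}$ can be forced to decay in a half-plane: if $b>b_0$, I would perform the $y$-integration first and close to the right, which produces the first formula for $I_3(a,b)$; if $a>a_0$, the symmetric choice of integrating in $x$ first produces the second formula. Both representations agree on their common domain by Cauchy's theorem, and one of the two conditions will be available in the asymptotic regimes of interest in Sections~\ref{sec:7}--\ref{sec:8}.

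The main technical obstacle is the rigorous justification of the vanishing of the semicircular arcs and of the applicability of Fubini on the unbounded contour $\Re(x)=\Re(y)=-\epsilon$. Both issues are addressed by the $e^{-c\sqrt{|v|}}$ decay of $\varphi_1,\varphi_2$ along vertical lines proved in Proposition~\ref{lap_dec_inv}, together with the observation that the branching points of $Y^\pm$ lie at $x_{\max}>0$, so that $\gamma'_y(x,Y^+(x))$ does not vanish on the contour $\Re(x)=-\epsilon$ and the residue formulae are well-defined.
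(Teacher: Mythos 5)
Your proposal follows the same route as the paper: substitute the functional-equation decomposition of $\varphi$ into the double Laplace-inversion integral, then collapse the inner line integral by the residue theorem at $y=Y^{+}(x)$ (respectively $x=X^{+}(y)$), closing the contour to the right, with the case distinction in $I_3$ dictated by which exponential factor decays. The paper's own proof is very terse and delegates the computation to an external reference; the justifications you supply (Proposition~\ref{lap_dec_inv} for the decay of $\varphi_1,\varphi_2$ along vertical lines, the formulae of Lemma~\ref{lemXYY} for the position of the roots $Y^{\pm}(x)$ relative to the contour $\Re y = -\epsilon$) are the expected ones.
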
 
\begin{proof}
By the functional equation~\eqref{Equation fonctionnelle}, $\phi(x,y)$ can be decomposed as:
\begin{equation}\label{eq:5.2}
    \varphi(x, y) =-\frac{\gamma_1(x, y)\varphi_1(y)}{\gamma(x,y)}  \nr{+}\nb{-} \frac{\gamma_2(x, y)\varphi_2(x)}{\gamma(x,y)}  \nr{+}\nb{-} \frac{e^{(x, y)\cdot z_0}}{\gamma(x, y)}.
\end{equation}
Substituting this expression into the double integral \eqref{eq:6.1}, $g^{z_0}(a,b)$ is written as the sum of three double integrals. \nr{The complex residue theorem, applied to the integral with respect to the variable $y$ at the residue $y=Y^+(x)$, transforms the double integral
$$\frac{1}{(2\pi i)^2 } \int_{-\epsilon-i \infty}^{-\epsilon+ i \infty} \int_{-\epsilon - i \infty }^{-\epsilon+ i \infty} -\frac{\gamma_2(x, y)\varphi_2(x)}{\gamma(x, y)} \exp(-a x -by) dydx$$
into the simple one $I_1(a,b)$. 
Please refer to~\cite[Lemma $4.1$]{franceschi2023asymptotics} for further details. The proofs for $I_2, I_3$ follow from analogous arguments.}
\nb{Let us consider the first term, given by  
\begin{equation}\label{eq:1stterm}
\frac{-1}{(2\pi i)^2} \int_{-\epsilon - i\infty}^{-\epsilon + i\infty} \varphi_2(x) \int_{-\epsilon - i\infty}^{-\epsilon + i\infty} \frac{\gamma_2(x, y)}{\gamma(x, y)} e^{-a x - b y} \, dy \, dx.
\end{equation}
Let $C_R$ be the closed oriented contour defined by  
\[
C_R = \{-\epsilon + i t \mid t \in [-R, R]\} \cup \{-\epsilon + R e^{-i\theta} \mid \theta \in [-\pi/2, \pi/2]\}.
\]  
By applying the residue theorem along the contour $C_R$ and considering the asymptotics as $R \to +\infty$ (see~\cite[Lemma 4.1]{franceschi2023asymptotics} for more details), we obtain the identity  
\[
\int_{-\epsilon - i\infty}^{-\epsilon + i\infty} \frac{\gamma_2(x, y)}{\gamma(x, y)} e^{-a x - b y} \, dy = \frac{\gamma_2(x, Y^{+}(x)) }{\partial_y \gamma(x, Y^{+}(x))}e^{-a x - b Y^{+}(x)},
\]  
so that expression~\eqref{eq:1stterm} equals $I_1(a, b)$.  
The remaining terms are handled analogously.}

\end{proof}
To find the asymptotics of these integrals as $a, b \to +\infty$, we use the saddle-point method. For any $\alpha \in [0, \pi/2]$, let $(x(\alpha), y(\alpha))$ be defined as
\begin{equation}\label{xalpha}
(x(\alpha), y(\alpha)) = \argmax_{(x, y) \in \mathcal{P}} \left(\cos(\alpha)x + \sin(\alpha)y\right),
\end{equation}
see Figure~\ref{colcol}. \nr{We denote by $s(\alpha) \in \R$ the real number satisfying $(x(\alpha), y(\alpha)) = (x(s(\alpha)), y(s(\alpha)))$.}\nb{For $\alpha \in [0,\pi/2]$, we define the real number $\mathfrak{s}(\alpha) \in \R$ by $$\mathfrak{s}(\alpha) = \argmax_{s \in \R} (\cos(\alpha)x(s) + \sin(\alpha)y(s)).$$ Note that $(x(\alpha), y(\alpha)) = (x(\mathfrak{s}(\alpha)), y(\mathfrak{s}(\alpha)))$, using notation \eqref{parametrisation}.} By studying the variations of the function $s \longmapsto x(s)\cos(\alpha) +y(s)\sin(\alpha)$, we prove that  
\begin{equation}\label{mathfraks}
    \mathfrak{s}\colon\left\{
    \begin{aligned}
        &[0, \pi/2]\longrightarrow [s_{min}, s_{max}]\\
        &\quad\alpha\quad\longmapsto \nr{s(\alpha)=}\frac{\mu_2 - \tan(\alpha)\mu_1}{1 +\tan(\alpha)}
    \end{aligned}
    \right.\quad\quad \nb{(\textnormal{with}\quad\mathfrak{s}(\pi/2) = -\mu_1 = s_{min})}
\end{equation}
is a $C^\infty$ diffeomorphism, and
\begin{equation}\label{s_alpha}
    \mathfrak{s}^{-1}\colon\left\{
    \begin{aligned}
         &[s_{min}, s_{max}]\longrightarrow [0, \pi/2]\\
        &\quad\quad s \quad\quad\longmapsto \arctan\left(\frac{\mu_2 - s}{s + \mu_1}\right)
    \end{aligned}
    \right. \quad\quad \nb{(\textnormal{with}\quad\mathfrak{s}^{-1}(-\mu_1) = \pi/2).}
\end{equation} 
\nr{With notations from} \nb{Using the definitions of $\alpha^*, \alpha^{**}, x^*$ and $y^{**}$ given by} \eqref{alpha*}, \eqref{alpha**} and \eqref{eq:defx*y**}, if $x^*$ (resp. $y^*$) is a pole of $\phi_2$ (resp. $\phi_1$), then $x(\alpha^*)=x^*$ (resp. $y(\alpha^{**})=y^{**}$). Since $s^{**} < 0 < s^*$ (see Notation~\ref{not:2}), then the monotonicity of~\eqref{s_alpha} implies that $0 \leq \alpha^* < \alpha_\mu < \alpha^{**} \leq \pi/2$, where $\alpha_\mu = \arctan(\mu_2/\mu_1) \in (0, \pi/2)$ is the angle of the drift.  
We \nr{use the same notation as in}\nb{follow the notation of}~\cite{franceschi2023asymptotics} and define:
 \begin{equation}
     \label{fff}
 F(x,\alpha)=  - \cos (\alpha) x - \sin(\alpha) Y^{+}(x) + \cos (\alpha) x(\alpha) + \sin (\alpha)  y(\alpha) 
 \end{equation}
\begin{equation}
G(y, \alpha) =   -\cos(\alpha) X^{+}(y)- \sin(\alpha) y 
       + \cos (\alpha) x(\alpha) + \sin (\alpha) y(\alpha) .
\end{equation}
By construction, the equations $\nr{F'_x}\nb{\partial_xF}(x(\alpha),\alpha) = 0$ and $\nr{G'_y}\nb{\partial_yG}(y(\alpha),\alpha) = 0$ hold.
Then, \nb{by differentiating equations \eqref{fff} and $\gamma(x,Y^+(x)) = 0$, we get} for any $\alpha \in (0, \pi/2]$, \begin{equation} \label{all}
(Y^{+}(x))'\Bigm|_{x=x(\alpha)} = - \frac{ \partial_x\gamma(x(\alpha), y(\alpha))  }{ \partial_y\gamma(x(\alpha), y(\alpha))} = -\frac{\cos (\alpha)}{\sin (\alpha)},\quad (Y^{+}(x))''\Bigm|_{x=x(\alpha)} = -\frac{(1 + {\rm ctan}\,(\alpha))^2 }{ \partial_y\gamma(x(\alpha), y(\alpha))}.
\end{equation} 
Therefore,
\begin{equation}
\label{fzfz}
\nr{F''_{x}}\nb{\partial^2_{xx}F}(x(\alpha), \alpha) = \frac{(\sin(\alpha) + \cos(\alpha))^2 }{ \partial_y\gamma(x(\alpha), y(\alpha) ) \sin (\alpha) }>0,\  \  \ \alpha \in (0, \pi/2].
\end{equation} 
Similarly,
 $$\nr{G''_{y}}\nb{\partial^2_{yy}G}(y(\alpha), \alpha)=\frac{(\sin(\alpha) + \cos(\alpha))^2 }{ \partial_x\gamma(x(\alpha), y(\alpha) ) \cos (\alpha) }>0,\  \  \ \alpha \in [0, \pi/2).$$  

\subsection{Contour of steepest descent} \label{def_contour}
Let $\alpha_0 \in (0, \pi/2]$. The key \nr{point}\nb{idea} of the saddle-point method is to use the parameterised Morse lemma. \nb{Since $\partial^2_{xx}F(x(\alpha), \alpha) > 0$, Lemma A.1 from \cite{franceschi2023asymptotics}}\nr{\cite[Lemma A.1]{franceschi2023asymptotics} to obtain a path} \nb{yields some $\epsilon > 0, \eta >0$ and a family of smooth paths} $\Gamma_{x,\alpha} =\{x(it, \alpha) \mid  t \in [-\epsilon, \epsilon] \}, |\alpha -\alpha_0|<\eta$ \nr{and some constants $\epsilon > 0$ so}\nb{such} that 
\begin{equation}
\forall t \in [-\epsilon, \epsilon],\ \ \ \ \ F(x(it, \alpha), \alpha)= -t^2.
\end{equation}
\nr{This is possible because $F''_x(x(\alpha), \alpha) > 0$.} For further details on the construction, please refer to~\cite[Appendix A]{franceschi2023asymptotics}. Define \nr{$x^+_\alpha = x(i\epsilon, \alpha)$ and $x^-_\alpha = x(-i\epsilon, \alpha)$.} 
\nb{\begin{equation}\label{eq:xalpha+}
x^+_\alpha = x(i\epsilon, \alpha),\quad  x^-_\alpha = x(-i\epsilon, \alpha).
\end{equation}} In particular,
  \begin{equation}
      \label{fe}
   F(x^+_\alpha, \alpha)=-\epsilon^2, \  \  F(x^-_\alpha, \alpha)=-\epsilon^2.
   \end{equation}
Furthermore, $Im(x_\alpha^+) > 0$ and $Im(x_\alpha^-) < 0$ (see Figure \ref{Changing path for $I_2$} and construction in \cite{franceschi2023asymptotics}). The same construction holds for $\Gamma_{y,\alpha} =\{y(it, \alpha) \mid t \in [-\epsilon, \epsilon] \}$ for $G$ and $\alpha_0 \in [0, \pi/2)$. These paths satisfy $$\Gamma_{x,\alpha}=\underrightarrow{\overleftarrow{X^{+}(\Gamma_{y,\alpha})}} \quad \textnormal{and}\quad \Gamma_{y,\alpha}=\underrightarrow{\overleftarrow{Y^{+}(\Gamma_{x,\alpha}) }}, \quad 0 < \alpha < \pi/2.$$ \nr{We define $\Gamma_{x,0}=\underrightarrow{\overleftarrow{X^{+}(\Gamma_{y,0})}}$ $\big($resp. $\Gamma_{y,\pi/2}=\underrightarrow{\overleftarrow{Y^{+}(\Gamma_{x,\pi/2}) }}$ $\big)$.}\ngray{This is not necessary, so I deleted this because of the amount of notation.} The arrows above and below the paths indicate reversed orientations, this notation is taken from \cite[Chapter 5.3]{FIM17}. 

\subsection{Shift of the integration contours and contribution of the poles} 
\label{sec:shift}
We now apply the saddle-point method. To do this, we shift the integration contours of $I_1$, $I_2$ and $I_3$ to contours passing through the saddle-point and following the steepest descent contours $\Gamma_{x,\alpha}$ and $\Gamma_{y,\alpha}$.
We define $T_{x, \alpha}= S^-_{x, \alpha}+ \Gamma_{x, \alpha} + S^+_{x, \alpha}$  and 
     $T_{y, \alpha}= S^-_{y, \alpha}+ \Gamma_{y, \alpha} + S^+_{y, \alpha}$  for $\alpha \in [0, \pi/2]$ where
$$S^+_{x, \alpha}=\{x^+_{\alpha} +it \mid t \geq 0 \},\ \ 
 S^-_{x, \alpha}=\{x^-_{\alpha} - it \mid t \geq 0 \},$$ 
        $$S^+_{y, \alpha}=\{y^+_{\alpha} +it \mid t \geq 0 \}, \ \  
 S^-_{y, \alpha}=\{y^-_{\alpha} - it \mid t \geq 0 \}.$$

\begin{figure}[hbtp]
\centering
\includegraphics[scale=2]{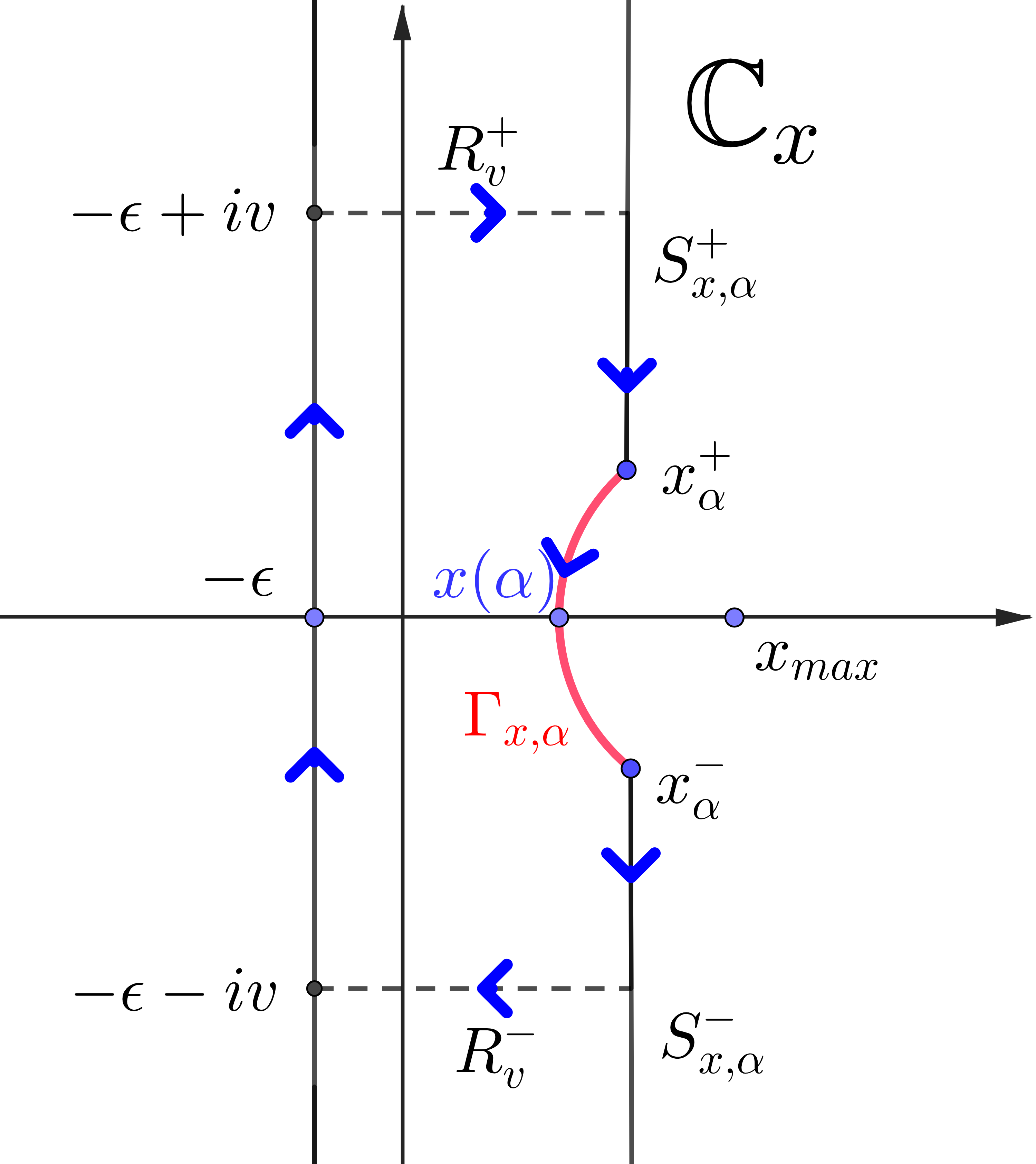}
\vspace{0cm}
\caption{Changing path for $I_2$. Here, $x(\alpha) < x^*$.}
\label{Changing path for $I_2$}
\end{figure}

\begin{lemma}[\nr{Changing paths and pole}\nb{Contour deformation and contribution of the pole}] \label{residus}
Let $\alpha \in [0, \pi/2]\backslash\{\alpha^*, \alpha^{**}\}$ and $z_0 \neq (0,0)$ be the initial condition of the process. Then for any $a,b>0$,
\begin{equation}\label{I1}
    I_1(a,b)=\frac{ \big({\rm -res}_{x=x^*} \phi_2(x) \big ) \gamma_2(x^*, y^*)}{ \partial_y\gamma(x^*, y^*)} \exp(-ax^* -b y^* ) {\bf 1}_{ \alpha <\alpha^*}
\end{equation}
$$\quad\quad\quad+\ \frac{1}{ 2 \pi i } \int\limits_{T_{x, \alpha}}\frac{\phi_2(x) \gamma_2(x, Y^+(x))}{\partial_y\gamma(x, Y^+(x))} \exp(-ax -b Y^+(x))dx, $$
\begin{equation} I_2(a, b) = \frac{ \big({\rm -res}_{y=y^{**}} \phi_1(y) \big) \gamma_1(x^{**}, y^{**})}{ \partial_x\gamma(x^{**},y^{**})} \exp(-ax^{**} -by^{**}) {\bf 1}_{ \alpha >\alpha^{**}}
\end{equation}
$$\quad\quad\quad+ \frac{1}{2\pi i } \int\limits_{T_{y, \alpha}} 
\frac{\phi_1(y) \gamma_1(X^+(y), y)}{\partial_x\gamma(X^+(y), y)} \exp(-aX^+(y) -b y)dy,$$
\begin{equation} I_3(a,b)= \frac{1}{2\pi i }  \int\limits_{T_{x, \alpha}}   \exp((a_0-a) x + (b_0-b) Y^{+}(x)) 
  \frac{dx}{\partial_y\gamma(x, Y^{+}(x)) } \ \  \hbox{ if }b>b_0, 
  \end{equation}
\begin{equation}
 I_3(a,b)= \frac{1}{2\pi i }\int\limits_{ T_{y, \alpha}}
 \exp((a_0-a) X^{+}(y)  + (b_0-b) y ) 
 \frac{dy}{\partial_x\gamma(X^{+}(y), y) } \ \  \hbox{ if }a>a_0.
\end{equation}
\end{lemma}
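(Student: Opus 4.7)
The proof is a standard contour-deformation via Cauchy's residue theorem, shifting the vertical integration contour $\{-\epsilon+iv:v\in\R\}$ onto the steepest descent contour $T_{x,\alpha}$ (or $T_{y,\alpha}$) built in Subsection~\ref{def_contour}, and collecting the residues crossed in the process.

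The plan for $I_1$ is as follows. Fix $R>0$ large and consider the closed rectangular contour formed by the truncated vertical segment $\{-\epsilon + iv : |v| \leq R\}$, the corresponding truncation of $T_{x,\alpha}$, together with two horizontal segments at heights $\pm iR$ joining them. I would first check that the integrals along the horizontal segments vanish as $R\to\infty$. On both vertical portions of the rectangle boundary (the line $\Re(x)=-\epsilon$ and the outer edges of $S^\pm_{x,\alpha}$), Proposition~\ref{lap_dec_inv} gives $|\phi_2(x)|\leq C\exp(-c\sqrt{|\Im(x)|})$; combined with the behaviour of $Y^+$ given by Lemma~\ref{lemXYY}, namely $\Re(Y^+(x))\to+\infty$ as $|\Im(x)|\to\infty$, this dominates the at most polynomial growth of $\gamma_2(x,Y^+(x))/\gamma'_y(x,Y^+(x))$ uniformly on the horizontal slabs, so the contribution of the horizontal segments is $o(1)$.

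Next I would enumerate the poles of the integrand enclosed in the rectangle. Proposition~\ref{pole} restricts the singularities of $\phi_2$ in the relevant strip to at most one simple pole at $x=x^*$; the only other candidate singularity, namely the zero $x_{max}$ of $\gamma'_y(x,Y^+(x))=\sqrt{\mu_2^2-2x}$, is the branch point of $Y^+$ and is avoided by the construction of $\Gamma_{x,\alpha}$ on the principal sheet. The pole $x^*$ is enclosed iff $-\epsilon<x^*<\Re(x^+_\alpha)$; using the strict monotonicity of the diffeomorphism $\mathfrak{s}$ from~\eqref{s_alpha} together with the identity $x(\alpha^*)=x^*$, this condition is equivalent to $\alpha<\alpha^*$. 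Cauchy's theorem then yields~\eqref{I1}, the sign of the residue term coming from the clockwise orientation of the rectangle and the factor $\gamma_2(x^*,y^*)/\gamma'_y(x^*,y^*)\cdot e^{-ax^*-by^*}$ being the value at $x^*$ of the regular part of the integrand. The argument for $I_2$ is entirely symmetric: the pole of $\phi_1$ at $y=y^{**}$ is crossed iff $y(\alpha)>y^{**}$, equivalently $\alpha>\alpha^{**}$. For $I_3$, no pole can be enclosed at all (the only candidate is again the branch point $x_{max}$), so the deformation is direct; the horizontal segments vanish thanks to the assumption $b>b_0$ (respectively $a>a_0$), which forces $\Re[(a_0-a)x+(b_0-b)Y^+(x)]\to-\infty$ as $|\Im(x)|\to\infty$ along the vertical boundaries of the rectangle.

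The main obstacle is the decay control along the horizontal connecting segments of the rectangle: one must verify that the sub-exponential bound $e^{-c\sqrt{|\Im(x)|}}$ from Proposition~\ref{lap_dec_inv}, combined with the estimate of $Y^+$ from Lemma~\ref{lemXYY}, is sufficient to kill the polynomial growth of the rational factors \emph{uniformly} in $\Re(x)$ across the slab between the two contours, and also that the whole rectangle stays inside the domain of meromorphy of $\phi_2$ (i.e.\ $\Re(x)<x_{max}+\delta$, off the cut $[x_{max},x_{max}+\delta]$). A secondary technical point is the precise equivalence between ``$x^*$ lies to the left of the deformed contour'' and ``$\alpha<\alpha^*$'', which is handled through the explicit parameterisation of Notation~\ref{not:2} together with definition~\eqref{alpha*}.
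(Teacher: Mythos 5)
Your structure is correct and matches the paper's: push the vertical contour $\{\Re(x)=-\epsilon\}$ to $T_{x,\alpha}$ via the residue theorem, verify that the horizontal connecting segments at height $\pm iR$ go to zero, and pick up the residue at $x^*$ exactly when $\alpha<\alpha^*$ (the monotonicity of $\alpha\mapsto x(\alpha)$ and the defining relation $x(\alpha^*)=x^*$ give this, as you indicate). The treatment of $I_3$ — direct deformation with no residue, decay coming from $b>b_0$ (resp.\ $a>a_0$) — is also correct.

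The gap, which you yourself flag at the end, is the uniform decay of $\phi_2$ on the horizontal segments. You invoke Proposition~\ref{lap_dec_inv}, but that proposition is stated and proved only on the \emph{fixed} vertical line $\Re(x)=-\epsilon$; it does not by itself give the required bound $\sup_{u\in[X^+(y_{max})-\eta,\,x_{max}+\eta]}|\phi_2(u+iv)|\cdot(\cdots)\to 0$ as $|v|\to\infty$ over a whole slab. The paper closes this by not estimating $\phi_2$ directly: it substitutes the continuation formula~\eqref{cont}, writing $\phi_2(u+iv)$ as $\bigl(-\gamma_1\,\phi_1(Y^-(u+iv))-e^{z_0\cdot(u+iv,Y^-(u+iv))}\bigr)/\gamma_2(u+iv,Y^-(u+iv))$, then uses the uniform-in-$u$ square-root growth $\Re(Y^\pm(u+iv))\sim\pm\sqrt{|v|}$ from Lemma~\ref{lemXYY}, the linear growth of $\gamma_1,\gamma_2$, the $\sqrt{v}$ growth of $\gamma'_y$, and the decay bound for $\phi_1$ from Lemma~\ref{lap_decroit}. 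This is essentially the same calculation as in the proof of Proposition~\ref{lap_dec_inv}, but carried out with the supremum over $u$ on a compact interval; you would need to redo it in that uniform form rather than cite the proposition as stated. Once that is done, your argument for the residue contribution and for the equivalence $\{x^*\text{ enclosed}\}\Leftrightarrow\{\alpha<\alpha^*\}$ is exactly what the paper uses.

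One small correction of phrasing: the condition for the pole to be enclosed is $x^*<x(\alpha)$ (the real-axis crossing of $T_{x,\alpha}$), not $x^*<\Re(x^+_\alpha)$; the endpoint $x^+_\alpha$ has non-zero imaginary part and its real part need not coincide with $x(\alpha)$. The conclusion $\alpha<\alpha^*$ is unchanged.
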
 


\begin{proof} 
The shift of the path is illustrated in Figure~\ref{Changing path for $I_2$} and is the same as in \cite[Lemma 6.1]{franceschi2023asymptotics}. The proof of~\eqref{I1} is a direct consequence of the residue theorem, provided that\nr{we show that} the integrals over the horizontal contours $R^+_v$ and $R^-_v$ tend to $0$ as $v$ tends to $+\infty.$ Then, it remains to prove that for any sufficiently small $\eta>0$,
 $$ \sup_{ u \in [X^+(y_{max})-\eta, x^{max}+ \eta]}
   \Big| \frac{ \phi_2(u+iv) \gamma_2(u+iv, Y^+(u+iv))}{ \gamma_y'(u+iv, Y^+(u+iv))} \exp(-a(u+iv) - b Y^+(u+iv)) \Big| \to 0, \ \  \hbox{ as } v\to \infty.$$
By the functional equation (\ref{Equation fonctionnelle}) and continuation formula \eqref{cont}, the term inside the supremum is equal to 
$$ \left| \frac{ \left(\gamma_1(u+iv, Y^-(u+iv))\phi_1(Y^-(u+iv)) + e^{a_0(u+iv) + b_0Y^-(u+iv)} \right)\gamma_2(u+iv, Y^+(u+iv))}{\gamma_2(u+iv, Y^-(u+iv)) \gamma_y'(u+iv, Y^+(u+iv))} \right|$$
$$\times |\exp(-a(u+iv) - b Y^+(u+iv))|. $$
By \eqref{Y-}, $Re(Y^\pm(u+iv))$ grows like $\pm\sqrt{|v|}$ uniformly in $u \in  [X^+(y_{max})-\eta, x^{max}+ \eta]$ as $|v|\to +\infty$. 
Furthermore, 
$\gamma_2(u+iv, Y^\pm(u+iv))$ grows linearly in $v$ uniformly in $u \in  [X^+(y_{max})-\eta, x^{max}+ \eta]$ as $v \to +\infty$ by Assumption~\eqref{vectors}. The same asymptotics hold for $\gamma_1(u+iv, Y^-(u+iv))$. Moreover, $\partial_y\gamma(u+iv, Y^{+}(u+iv))= \sqrt{-2(u+iv) + \mu_2^2}$, so this expression grows with rate $\sqrt{v}$, uniformly in $u \in  [X^+(y_{max})-\eta, x^{max}+ \eta]$. Considering the exponential decay of $\phi_1$ (see Lemma \ref{lap_decroit}) we get the conclusion for $I_1$. Formulae for $I_2$ and $I_3$ are obtained similarly. 
\end{proof} 

\subsection{Negligibility of some integrals}
For any pair $(a,b) \in {\mathbb{R}}_+^2$  let
$\alpha(a,b)$ be the angle in $[0,\pi/2]$ such that $\cos (\alpha) = \frac{ a }{ \sqrt{a^2+b^2}}$  and  $\sin(\alpha) = \frac{ b }{\sqrt{a^2+ b^2}}$.
We now aim to evaluate the asymptotics of the integrals over $T^\pm_{x,\alpha}$ and $T^\pm_{y,\alpha}$ in Lemma~\ref{residus} as $\sqrt{a^2 + b^2} \to +\infty$ and $\alpha(a,b) \to \alpha_0$ for some $\alpha_0 \in [0, \pi/2].$ In the next lemma, we establish exponential bounds for the integrals over the vertical contours $S^{\pm}_{x,\alpha}$, $S^{\pm}_{y,\alpha}$. These bounds imply that the main contribution to the above asymptotics comes from the integrals over the steepest descent contours $\Gamma_{x,\alpha}$, $\Gamma_{y,\alpha}$, while those over $S^\pm_{x,\alpha}$ and $S^\pm_{y,\alpha}$ turn out to be negligible.

\begin{lemma}[Negligibility of the integrals over $S_{x,\alpha}^\pm$ and $S_{y,\alpha}^\pm$] 
\label{pp}
Suppose $z_0 \neq (0,0).$ Let $K$ be a compact neighbourhood of $z_0$ in the quadrant satisfying $d((0,0), K) > 0$. Let $\alpha_0 \in [0, \pi/2]$. Then, for sufficiently small $\eta>0$, there exist \nr{some }constants $r_0>0$ and $D_{\alpha_0}>0$ such that for any $z \in K$ and any pair $(a,b)$ satisfying $\sqrt{a^2+b^2}>r_0$ and $|\alpha(a,b) - \alpha_0|<\eta$, the following inequalities hold
\begin{equation}\label{truc1}
\Big|\int\limits_{S^{\pm}_{x, \alpha}}
\frac{\phi_2^z(x) \gamma_2(x, Y^+(x))}{\partial_y\gamma(x, Y^+(x))} \exp\big(-ax -b Y^+(x)\big)dx \Big| \leq D_{\alpha_0} \exp\Big(-a x(\alpha) -b y(\alpha) - \epsilon \sqrt{a^2+b^2}  \Big),
\end{equation}
\begin{equation}
    \label{truc2}
\Big|\int\limits_{S^{\pm}_{y, \alpha}}
\frac{\phi_1^z(y) \gamma_1(X^+(y), y)}{\partial_x\gamma(X^+(y), y)} \exp\big(-aX^+(y) -b y \big)dy \Big| \leq D_{\alpha_0} \exp\Big(-a x(\alpha) -b y(\alpha) - \epsilon^2\sqrt{a^2+b^2}\Big).
\end{equation}

 If $b>b_0$,
\begin{equation}
\label{truc3}
\Big| \int\limits_{S^{\pm} _{x, \alpha}}  
\exp((a_0-a) x + (b_0-b) Y^{+}(x))
  \frac{dx}{\partial_y\gamma(x, Y^{+}(x)) } \Big| \leq  
  \frac{D_{\alpha_0}}{b-b_0} \exp\Big(-a x(\alpha) -b y(\alpha) -\epsilon^2\sqrt{a^2+(b-b_0)^2}\Big).
\end{equation}  

 If $a>a_0$,
\begin{equation}
\label{truct4}
\Big| \int\limits_{S^{\pm} _{y, \alpha}}  
\exp((a_0-a) X^+(y) + (b_0-b) y )
  \frac{dy}{\partial_x\gamma(X^+(y), y) } \Big| \leq  
 \frac{D_{\alpha_0}}{a-a_0}\exp\Big(-a x(\alpha) -b y(\alpha) -\epsilon^2\sqrt{(a-a_0)^2+b^2}\Big).
\end{equation}

\end{lemma}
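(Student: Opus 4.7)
The plan is to factor the main exponential $e^{-ax(\alpha)-by(\alpha)}$ out of each integrand, then bound the remaining factor uniformly along the vertical contour and integrate in the imaginary direction. I focus on \eqref{truc1}; the bounds \eqref{truc2}--\eqref{truct4} follow by analogous arguments. For \eqref{truc3}--\eqref{truct4}, the Laplace transform factor is replaced by $\exp(a_0 x + b_0 Y^+(x))$ (resp.\ the $y$-analogue), and the $1/(b-b_0)$ (resp.\ $1/(a-a_0)$) prefactor arises by explicit integration of $\exp(-(b-b_0)\sqrt{t/2})$ against $dt$, using the square-root growth of $\Re Y^+$ (resp.\ $\Re X^+$) on vertical lines given by \eqref{Ymoins}.

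Parameterise $S^+_{x,\alpha}$ as $x = x^+_\alpha + it$, $t \geq 0$. Using the definition \eqref{fff} of $F$ together with $\cos\alpha = a/\sqrt{a^2+b^2}$, $\sin\alpha = b/\sqrt{a^2+b^2}$, I would rewrite
\begin{equation*}
\exp(-ax - bY^+(x)) = \exp\bigl(-ax(\alpha)-by(\alpha)\bigr)\exp\bigl(\sqrt{a^2+b^2}\, F(x,\alpha)\bigr).
\end{equation*}
At $t=0$ we have $F(x^+_\alpha,\alpha) = -\epsilon^2$ by \eqref{fe}. Since $\Re(x^+_\alpha + it)$ is constant in $t$, one computes $\partial_t \Re F(x^+_\alpha + it, \alpha) = -\sin(\alpha)\,\partial_t\Re Y^+(x^+_\alpha + it)$, and formula \eqref{Ymoins} shows that $\Re Y^+(u+iv)$ is non-decreasing in $|v|$ for fixed $u$. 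Hence $\Re F(x^+_\alpha + it, \alpha) \leq -\epsilon^2$ for all $t \geq 0$, uniformly in $\alpha$ in a small neighbourhood of $\alpha_0$ (the boundary case $\alpha_0 = 0$ is handled separately by exploiting the $\sqrt{|t|}$ growth of $\Re Y^+$ directly, which supplies decay even when the $\sin\alpha$ factor degenerates).

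Next, Proposition~\ref{lap_dec_inv} gives $|\phi^z_2(x^+_\alpha + it)| \leq C e^{-c\sqrt{t}}$, and the constants $c, C$ can be chosen uniformly in $z \in K$ since $K$ is compact and bounded away from the origin (tracking the dependence of the constants in the proof of that proposition on $(a_0, b_0)$ is routine). The rational factor $\gamma_2(x, Y^+(x))/\gamma'_y(x, Y^+(x))$ grows at most polynomially in $\sqrt{t}$ because $\gamma'_y(x, Y^+(x)) = \sqrt{-2x + \mu_2^2}$ is bounded away from zero along the contour. Multiplying all of these estimates together yields an integrand controlled by $C\, e^{-ax(\alpha)-by(\alpha)}e^{-\epsilon^2\sqrt{a^2+b^2}}e^{-c\sqrt{t}}\mathrm{poly}(\sqrt{t})$, which integrates to a finite constant over $t \in [0,\infty)$, producing \eqref{truc1}.

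The main obstacle is the global inequality $\Re F \leq -\epsilon^2$ along the entire vertical tail (not just locally near $x^+_\alpha$), which is the counterpart of the ``steepest descent is globally descent'' property in the non-degenerate setting of \cite[Lemma~6.2]{franceschi2023asymptotics}. A secondary subtlety is keeping all constants uniform in $\alpha$ near $\alpha_0$ and in $z \in K$, particularly across the boundary cases $\alpha_0 \in \{0, \pi/2\}$; here one must combine the monotonicity argument above with the unconditional square-root growth of $\Re Y^\pm$ from \eqref{Ymoins}. Once these two ingredients are in place, the three companion estimates \eqref{truc2}--\eqref{truct4} reduce to essentially the same computation with the symmetric roles of $x$ and $y$ (or with the Laplace transform replaced by the initial-condition exponential).
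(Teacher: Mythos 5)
Your decomposition (factor out $e^{-ax(\alpha)-by(\alpha)}$, show $\Re F\le -\epsilon^2$ along the vertical tail, bound the remaining factor uniformly, then integrate) matches the skeleton of the paper's argument. However the central technical step is handled incorrectly, and the handling of the boundary case $\alpha_0=0$ — which is where the paper spends most of its effort — is glossed over in a way that does not actually close the argument.

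The key gap is your appeal to Proposition~\ref{lap_dec_inv}. That estimate $|\phi_2^z(-\epsilon+iv)|\le Ce^{-c\sqrt{|v|}}$ is established only on vertical lines with \emph{negative} abscissa $\Re(x)=-\epsilon<0$. The contour $S^{\pm}_{x,\alpha}$ lives at $\Re(x)=\Re(x^{\pm}_{\alpha})$, which is a value close to the saddle abscissa $x(\alpha)$; for $\alpha_0$ near $0$ this tends to $x_{max}=\mu_2^2/2>0$. So you are invoking the proposition outside its domain. An extension to positive abscissas is plausible via the continuation formula~\eqref{cont} (using the sub-exponential decay of $e^{b_0\Re Y^-}$ and $\phi_1(Y^-(\cdot))$ for $z_0$ in the open quadrant), but it requires checking in addition that the denominator $\gamma_2(x,Y^-(x))$ stays bounded away from zero on the relevant vertical lines — this is exactly where the pole $x^*$ lurks when $\alpha_0$ is close to $\alpha^*$. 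None of this is discussed, and "tracking the constants on $(a_0,b_0)$ is routine" is not the issue; it is the abscissa of the line that matters.

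On the case $\alpha_0=0$ you write that one can exploit the $\sqrt{|t|}$ growth of $\Re Y^+$ "directly"; but that growth enters the exponent multiplied by $b$, and $b$ can be bounded (or even small) when $\alpha(a,b)\to 0$. So that factor supplies \emph{no} uniform decay along the tail, which is precisely why this case is delicate. The paper's own argument for $\alpha_0=0$ is entirely different: it splits $\phi_2^z$ via the continuation formula into a piece killed by the exponential decay of $\phi_1^z$ (which is protected by $a_0+b_0>0$, not by $b$), and a second piece that it controls by an integration by parts producing an $O(v^{-3/2})$ integrand. Your proposal would bypass this only if the sub-exponential decay of $\phi_2^z$ held along the contour — which is exactly the unproved step above. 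For $\alpha_0>0$ the paper needs only boundedness of $\phi_2^z$, the $O(\sqrt{v})$ growth of the rational factor, and the factor $e^{-b\sqrt{v}}$ with $b\to\infty$, which is weaker than what you are claiming and happens to be enough. In short: the plan is aligned with the paper at a high level, but the mechanism you propose for uniform decay is not established where you need it, and the one boundary direction that genuinely requires a new idea is dismissed with an incorrect heuristic.
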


\begin{proof}
We start by showing (\ref{truc1}). Using notations (\ref{fff}) \nb{and \eqref{eq:xalpha+}}, this inequality can be rewritten as
\begin{equation}
\label{lefths}
\Big|\int\limits_{v>0}   
\frac{\phi_2^z(x_\alpha^+ +iv) \gamma_2(x_\alpha^+ + iv, Y^+(x_\alpha^+ + iv))}{\partial_y\gamma(x_\alpha^++iv, Y^+(x_\alpha^+ + iv))} \exp\left(-aiv-b\left(Y^+(x_\alpha^+ + iv) - Y^+(x_\alpha^+)\right)\right)dx\Big|\leq D_{\alpha_0}
\end{equation}
where $\alpha=\alpha(a,b)$.

Suppose first that $\alpha_0 > 0$. Let $\alpha > 0$ and $0<\eta < \alpha_0/2$. Since $\partial_y\gamma(x_\alpha^++iv, Y^+(x_\alpha^+ + iv)) = \sqrt{-2(x_\alpha^++iv) + \mu_2^2}$, this expression does not vanish and grows \nr{with}\nb{at} rate $\sqrt{|v|}$ as $v \to +\infty$, uniformly in $\alpha$ with $|\alpha - \alpha_0|< \eta.$ Similarly, $\gamma_2(x_\alpha^+ + iv, Y^+(x_\alpha^+ + iv))$ grows with speed $|v|$ as $v \to +\infty$, uniformly in $\alpha$, $|\alpha - \alpha_0|< \eta$. Then we have, for all $v\geq0,$
$$\sup_{|\alpha - \alpha_0|< \eta} \frac{\gamma_2(x_\alpha^+ + iv, Y^+(x_\alpha^+ + iv))}{\partial_y\gamma(x_\alpha^++iv, Y^+(x_\alpha^+ + iv))} \leq C_\eta(1 + \sqrt{v})$$
for some \nb{constant} $C_\eta > 0$. If $|\alpha - \alpha_0|< \eta$, then by \eqref{Ymoins}, there exists a constant $C'_\eta > 0$ such that
\begin{equation}\label{rere}
Re\left(\sqrt{a^2+b^2}\big(F(x_\alpha^+ +iv,\alpha) -F(x_\alpha^+, \alpha)\big)\right) = b \left(Re(Y^+(x_\alpha^+ + iv)) - Re(Y^+(x_\alpha^+))\right) \geq C'_{\eta}b\sqrt{v}
\end{equation}
for any $v\geq 1$.
Furthermore, using the continuation formula (\ref{cont}), the estimates \eqref{eq:lap_decroit} and the continuity of $\phi_2^{z_0}(0)$ in $z_0$ (see \eqref{phi2decroit}), there exists a constant $D$ such that $|\phi_2^z(x_\alpha^+ +iv)| \leq D$ for all $v\geq0$, $z \in K$ and $|\alpha - \alpha_0|< \eta$. Then, the left-hand side of (\ref{lefths}) is bounded by 
$$DC_\eta C'_\eta\left(2 + \int\limits_{v>1}  (1 + \sqrt{v})e^{-b\sqrt{v}}dv\right) = DC_\eta C'_\eta\left(2+\frac{1}{b^2} + \frac{4}{b^3}\right) \leq D_{\alpha_0}$$
for some \nb{constant} $D_{\alpha_0} >0$ since $b \to +\infty$ (because $\alpha_0 > 0$).
This inequality implies (\ref{truc1}).

Now suppose that $\alpha_0 = 0$. We no longer use estimate (\ref{rere}), as it would produce terms of order $\frac{1}{b}$, and here $b$ may be close to zero. Let $z = (a_1, b_1) \in K$. We write continuation formula~\eqref{cont} for $\phi_2^z(x_\alpha^+ +iv)$, which splits into two terms:
\begin{equation}
\phi_2^z(x_\alpha^+ +iv) = -\frac{\gamma_1(x_\alpha^++iv, Y^-(x_\alpha^++iv))\phi_1^z(Y^-(x_\alpha^++iv))}{\gamma_2(x_\alpha^++iv, Y^-(u+iv))} - \frac{e^{a_1(x_\alpha^++iv) + b_1Y^-(x_\alpha^++iv)}}{\gamma_2(x_\alpha^++iv, Y^-(u+iv))},
\end{equation}
and  we substitute into the right-hand side of~\eqref{lefths}. Then, the integral \eqref{lefths} can be written as the sum of two terms\nr{ respectively}. For the first term, note that there are some constants, $c, C_0 >0$ independant  on $\alpha \in [0, \eta]$ and $z\in K$, such that
\begin{equation}\label{1er}
\left|\frac{ \gamma_2(x_\alpha^+ + iv, Y^+(x_\alpha^+ + iv))\gamma_1(x_\alpha^++iv, Y^-(x_\alpha^++iv))\phi_1^z(Y^-(x_\alpha^++iv))}{\gamma_2(x_\alpha^++iv, Y^-(u+iv))\partial_y\gamma(x_\alpha^++iv, Y^+(x_\alpha^+ + iv))} \exp\left(-aiv - b \left(Y^+(x_\alpha^+ + iv) - Y^+(x_\alpha^+)\right)\right)\right|
\end{equation}
$$\leq C_0(\sqrt{v}+1)e^{(a_1 + b_1)Re(Y^-(x_\alpha + iv))}\phi_1^{z}(0) \leq C_0(\sqrt{v}+1)e^{(a_1 + b_1)c\sqrt{v}}\phi_1^{z}(0).$$
for any $v \geq 0$. We recall that function $z \longmapsto \phi_1^{z}(0)$ is continuous, and therefore locally bounded. The integral of $(\ref{1er})$ over $v > 0$ can then be bounded by a positive constant which is (locally) independent of $z$ and of $0\leq \alpha \leq \eta$. The second term is given by
\begin{equation}\label{2eme}
\int_0^{+\infty}\frac{ \gamma_2(x_\alpha^+ + iv, Y^+(x_\alpha^+ + iv)) e^{a_1(x_\alpha^++iv) + b_1Y^-(x_\alpha^++iv)}}{\gamma_2(x_\alpha^++iv, Y^-(u+iv))\partial_y\gamma(x_\alpha^++iv, Y^+(x_\alpha^+ + iv))} \exp\left(-aiv - b \left(Y^+(x_\alpha^+ + iv) - Y^+(x_\alpha^+)\right)\right)dv.
\end{equation}
Note that if $b_1 = 0$, the quotient in the integrand is \nb{of order} $O(1/\sqrt{v}$) as $v \rightarrow+\infty$.
Moreover, it suffices to bound the integral over $(v_0, +\infty)$ \nr{with}\nb{for} some $v_0 > 0$, since the integrand is \nr{bounded uniformly in}\nb{uniformly bounded with respect to} $\alpha \in [0,\eta]$ and $z \in K$. By integration by parts, the integral over $(v_0, +\infty)$ equals
 \begin{equation}
\label{ipp}
\frac{\gamma_2(x_\alpha^+ +iv_0, Y^+(x_\alpha^++iv_0))e^{a_1(x_\alpha^++iv_0) + b_1Y^-(x_\alpha^++iv_0)}\exp\Big( -aiv_0 - b (Y^+(x_\alpha^+ +iv_0) -Y^+ (x_\alpha)) \Big)}{\gamma_2(x_\alpha^++iv, Y^-(x_\alpha^++iv_0))\partial_y\gamma (x_\alpha^+ +iv_0, Y^+(x_\alpha^++iv_0)) (-ai-b
\frac{d}{dv}(Y^+(x_\alpha^++iv))_{v=v_0})}
\end{equation}
$$
 - \int_{v_0}^\infty \frac{d}{dv}\left(
\frac{\gamma_2(x_\alpha^+ +iv, Y^+(x_\alpha^++iv))e^{a_1(x_\alpha^++iv) + b_1Y^-(x_\alpha^++iv)}}{\gamma_2(x_\alpha^++iv, Y^-(x_\alpha^++iv))\partial_y\gamma (x_\alpha^+ +iv, Y^+(x_\alpha^++iv)) (-ai-b
\frac{d}{dv}(Y^+(x_\alpha^++iv))}\right)        
$$   
$$\times \exp\big(  -aiv - b (Y^+(x_\alpha^+ +iv) - Y^+ (x_\alpha^+))  \big) dv.$$
Furthermore, $\frac{d}{dv}(Y^+(x_\alpha^++iv)) = i\left(1 - \frac{1}{\sqrt{\mu_2^2 - 2(x_\alpha + iv)}}\right)$ and $Re\left(1 - \frac{1}{\sqrt{\mu_2^2 - 2(x_\alpha + iv)}}\right) \geq 1/2$ for all $v \geq v_0$ with $v_0$ large enough and $0<\alpha<\eta$. With some calculations, the integrand of \eqref{ipp} is \nb{of order} $O(1/v^{3/2})$ as $v\rightarrow+\infty$. Hence, the integral in~\eqref{ipp} is bounded by a positive constant independent of $\alpha$ and of $z \in K$. This establishes the bound in~\eqref{truc1}. Inequalities~\eqref{truc2}, \eqref{truc3} and \eqref{truct4} are obtained similarly.
\end{proof}
\color{black}

\section{Proof of Theorem \ref{thm1}}\label{sec:proof}\label{sec:7}
In Section \ref{sub:6.1}, we establish the asymptotics \nr{of}\nb{stated in} Theorem~\ref{thm1}. In Section~\ref{sub:6.2}, we show that \nr{$h_\alpha(z_0) \neq 0$ for all $\alpha \in (\alpha^*, \alpha^{**})$ and$z_0 \in \R_+^2$}\nb{all the constants $h_\alpha(z_0)$  appearing in the asymptotics of Theorem~\ref{thm1} are non-zero,} which completes the proof of the theorem.

\subsection{Asymptotics in Theorem~\ref{thm1}}\label{sub:6.1}
We now have the tools to derive the asymptotics stated in Theorem \ref{thm1} where $h_{\alpha_0}(z_0)$ is given by~(\ref{harm1}), $h_{\alpha^*}(z_0)$ by~(\ref{harm2}) (with the symmetric formula for $h_{\alpha^{**}}(z_0)$), and
\begin{equation}\label{eq:c*}
c^* = \frac{\gamma_2(x^*,y^*)}{\partial_y\gamma(x^*, y^*)}\frac{x'(s^*)}{\gamma_2(x'(\zeta s^*),y'(\zeta s^*))},\quad c^{**} = \frac{\gamma_1(x^{**},y^{**})}{\partial_x\gamma(x^{**}, y^{**})}\frac{y'(s^{**})}{\gamma_1(x(\eta s^{**}),y'(\eta s^{**}))}
\end{equation}
\nb{where $x'(s)$ and $y'(s)$ are the derivatives of $x(s)$ and $y(s)$} (see \eqref{parametrisation} for the definition of $x(s)$ and $y(s)$, \eqref{s*} for $s^*$ and $ s^{**}$, \eqref{eq:defx*y**} and \eqref{eq:defx**y*} for $x^*, x^{**}, y^*$ and $y^{**},$ and \eqref{eta_zeta} for $\eta s$ and $\zeta s$).


\begin{proof}[Proof of \nb{the} asymptotics in Theorem \ref{thm1} when $z_0 \neq (0,0)$]
We use the identity $g(a,b) = I_1(a,b) + I_2(a,b) + I_3(a,b)$, using the expressions provided in Lemma~\ref{residus}. By the classical saddle-point method (see details in \cite[Lemma $8.1$]{franceschi2023asymptotics}), the sum of the integrals of Lemma \ref{residus} along $\Gamma_{\alpha,x}$ and $\Gamma_{y,\alpha}=\underrightarrow{\overleftarrow{Y^{+}(\Gamma_{x,\alpha})}}$ has the following asymptotic expansion
\begin{equation}\label{ints_col}
\frac{1}{ 2 \pi i } \int\limits_{\Gamma_{x, \alpha}}\frac{\phi_2(x) \gamma_2(x, Y^+(x))}{\partial_y\gamma(x, Y^+(x))} \exp(-ax -b Y^+(x))dx \ +\  \frac{1}{ 2 \pi i } \int\limits_{\Gamma_{y, \alpha}}
\frac{\phi_1(y) \gamma_1(X^+(y), y)}{\partial_x\gamma(X^+(y), y)} \exp(-aX^+(y) -b y)dy 
\end{equation}
$$+ \frac{1}{2\pi i }\int\limits_{ \Gamma_{y, \alpha}}
 \exp((a_0-a) X^{+}(y)  + (b_0-b) y ) 
 \frac{dy}{\partial_x\gamma(X^{+}(y), y) } $$ $$\ \underset{r\to\infty \atop \alpha \to \alpha_0}{=}\  e^{-r(\cos(\alpha)x(\alpha) + \sin(\alpha)y(\alpha))}\left(\frac{1}{\sqrt{r}}
  \sum_{k=0}^n \frac{c^{z_0}_k(\alpha)}{ r^{k}} + o\left(\frac{1}{r^n\sqrt{r}}\right)\right).$$
where \nb{$(a,b) = (r\cos(\alpha), r\sin(\alpha))$, and}
\nr{\begin{equation}\label{eq:c0}
{c_0^{z_0}(\alpha) = \frac{1}{\sqrt{2\pi(\cos(\alpha) + \sin(\alpha))^2}}\sqrt\frac{\sin(\alpha)}{\partial_y\gamma(x(\alpha), y(\alpha))}h_{\alpha}(z_0) = \frac{1}{\sqrt{2\pi(\cos(\alpha) + \sin(\alpha))}}h_{\alpha}(z_0)}.
\end{equation}}
\nb{\begin{align}
c_0^{z_0}(\alpha) &= \frac{1}{\sqrt{2\pi(\cos(\alpha) + \sin(\alpha))^2}}\sqrt\frac{\sin(\alpha)}{\partial_y\gamma(x(\alpha), y(\alpha))}\\
&\times\left(\gamma_1(x(\alpha),y(\alpha))\phi_1(y(\alpha)) + \gamma_2(x(\alpha),y(\alpha))\phi_2(x(\alpha)) + e^{(x(\alpha),y(\alpha))\cdot z_0} \right)\\
& = \frac{1}{\sqrt{2\pi(\cos(\alpha) + \sin(\alpha))}}h_{\alpha}(z_0)
\end{align}
by the explicit expressions of $\phi_1(y(s))$ and $\phi_2(x(s))$ given in \eqref{explicite} and \eqref{explicite2}, evaluated at $s = \mathfrak{s}(\alpha)$ (see \eqref{mathfraks}). }\ngray{With this precision, we see precisely where the explicit expressions of Laplace transforms arise in the constants in asymptotics of Green fonctions.}

Lemma \ref{pp} shows that, when $z_0 \neq (0,0)$, integrals over $S^{\pm}_x, S^{\pm}_y$ are negligible compared to those over paths of steepest descent. Finally, Theorem~\ref{harmoniques} gives the explicit form of residues of Lemma~\ref{residus} providing $h_{\alpha^*}(z_0)$, $h_{\alpha^{**}}(z_0)$. 
\end{proof}

For the case $z_0 = (0, 0)$, we establish two preliminary lemmas. The first one is a consequence of the general Martin boundary theory.
\begin{lemma}\label{harmpasbeau}
For $\alpha \in [\alpha^*, \alpha^{**}]$, $z \longmapsto h_\alpha(z)$ is harmonic on $\R_+^2 \backslash \{(0,0)\}$.
\end{lemma}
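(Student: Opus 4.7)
The plan is to identify $h_\alpha$, up to a multiplicative constant, with a limit of Martin kernels and then invoke the general Martin boundary theory summarized in \cite{Chung2005MarkovPB, Doob1984ClassicalPT, kunitaWatanabe1965}.

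First, I would observe that for any fixed target point $w \in \R_+^2$, the Green's density $z \mapsto g^z(w)$ is harmonic on $\R_+^2 \setminus \{w\}$. This is a classical consequence of the strong Markov property for the transient process $(Z_t)$: for any open $U$ with $z \in U$ and $\overline{U}$ a compact subset of $\R_+^2 \setminus \{w\}$,
\begin{equation*}
g^z(w) \;=\; \E_z\!\bigl[g^{Z_{\tau_U}}(w)\bigr],
\end{equation*}
where $\tau_U$ denotes the first exit time from $U$. Dividing by $g^0(w) > 0$, the normalized Martin kernel $k_w(z) := g^z(w)/g^0(w)$ is harmonic in $z$ on $\R_+^2 \setminus \{0,w\}$.

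Next, for $\alpha \in (\alpha^*, \alpha^{**})$, Theorem~\ref{thm1} yields for every $z \neq 0$ the pointwise convergence (the exponential prefactor, the $r^{-1/2}$ factor and the constant $c_{\alpha_0}$ all cancel in the ratio)
\begin{equation*}
k_{w_r}(z) \;\xrightarrow[r\to\infty]{}\; \frac{h_\alpha(z)}{h_\alpha(0)}, \qquad w_r := r(\cos\alpha,\sin\alpha).
\end{equation*}
The denominator is non-zero by the last assertion of Theorem~\ref{thm1}. The two endpoint directions $\alpha = \alpha^*$ and $\alpha = \alpha^{**}$ are handled identically, using the asymptotics stated in Theorems~\ref{alpha=0}, \ref{directions_poles} and \ref{thm:7}.

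The general Martin boundary theory then asserts that a pointwise limit of Martin kernels whose singularities $w_r$ escape to infinity remains harmonic on the entire state space away from the reference point. Applying this in our setting yields that $z \mapsto h_\alpha(z)/h_\alpha(0)$, and hence $h_\alpha$ itself, is harmonic on $\R_+^2 \setminus \{(0,0)\}$. The delicate point, which is the main technical obstacle, is verifying that harmonicity actually passes to the limit: this requires a local domination or uniform integrability of the family $\{k_{w_r}\}_r$ on compact subsets of $\R_+^2 \setminus \{(0,0)\}$. Such a domination follows from the exponential bounds on the Green's functions obtained in Section~\ref{sec:6} (in particular Lemma~\ref{pp}) together with the uniformity of the asymptotics of Theorem~\ref{thm1} on compacts, which together bound $g^z(w_r)/g^0(w_r)$ uniformly in $z \in K$ and $r$ large for any compact $K \subset \R_+^2 \setminus \{(0,0)\}$.
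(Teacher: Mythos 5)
Your proof is correct and follows the same underlying route as the paper's: represent $h_\alpha$ (up to normalization) as the pointwise limit of Martin kernels $k_{w_r}$ along the direction $\alpha$ and then invoke Kunita--Watanabe theory to conclude harmonicity off the reference point. The difference lies in how each handles the technical issue near the origin. The paper's argument is a one-liner using a domain-restriction trick: since $Z_t\cdot(1,1)$ is strictly increasing under \eqref{drift}--\eqref{simple} (because $(1,1)$ is orthogonal to the Brownian direction $v$, the drift component of $Z_t\cdot(1,1)$ is positive, and $(1,1)^T R$ has positive entries by \eqref{vectors}), the process started from $z_0=(a_0,b_0)\neq(0,0)$ is confined to the half-plane $\{x+y\geq a_0+b_0\}$, which excludes the origin; one then applies \cite{kunitaWatanabe1965} to this restricted process directly, and letting $z_0$ range over $\R_+^2\setminus\{(0,0)\}$ gives the claim with no dominated-convergence step. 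Your version instead proves harmonicity of $z\mapsto g^z(w)$ via the strong Markov property and passes to the limit using uniform convergence on compacts. This works, and the relevant input is exactly the uniform expansion \eqref{unif} of the paper (valid on any compact $K$ with $d((0,0),K)>0$), not Lemma~\ref{pp} per se, which only bounds the contour tails and is one ingredient of \eqref{unif}; you should cite \eqref{unif} directly when you justify exchanging $\lim_{r\to\infty}$ and $\E_z[\,\cdot\,]$ over the compact exit boundary $\partial U$. In short: same conceptual proof; the paper's restriction trick is slicker, your version spells out the limit-exchange explicitly.
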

\begin{proof}
For $z_0 =(a_0, b_0) \neq (0,0)$, we may consider the process evolving in $\R_+^2 \cap \{(x,y), \ x + y \geq a_0 + b_0\}$. Since $h_\alpha$ is the limit of the quotient of Green's kernels, \cite{kunitaWatanabe1965} implies its harmonicity over all these domains, and \nr{then}\nb{thus} over $\R_+^2 \backslash \{(0,0)\}$.   
\end{proof}

\begin{lemma}\label{moyenne}
Let $\Theta$ be the contour defined by $\Theta := \{z \in \R_+^2\,:\, |z| = 1\}$ and $T_\Theta := \inf\{t\geq 0, Z_t \in \Theta\}$ the stopping time at $\Theta$. Then, for all $z_0 \in \R_+^2$ satisfying $|z_0| < 1$,
\begin{equation}\label{moyenne!}
h_{\alpha_0}(z_0) = \int_{\Theta} h_{\alpha_0}(z) \P_{z_0}(Z_{T_{\Theta}} = dz).
\end{equation}
\end{lemma}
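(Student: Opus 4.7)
The plan is to read (\ref{moyenne!}) as the optional stopping identity for the bounded martingale $(h_{\alpha_0}(Z_{t\wedge T_\Theta}))_{t\geq 0}$.

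First I would check that $T_\Theta$ is $\P_{z_0}$-almost surely finite. Since $|z_0|<1$, the continuous path $t\mapsto |Z_t|$ starts strictly inside the unit ball; by transience (Proposition \ref{prop:transient}) one has $Z_t\cdot w\geq \mu\cdot w\, t\to +\infty$, where $w=(\sigma_2,\sigma_1)$, so $|Z_t|$ eventually exceeds $1$ and by the intermediate value theorem crosses $\Theta$ at some finite time $T_\Theta$.

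Next I would promote Lemma \ref{harmpasbeau} to a martingale statement. By (\ref{def:harm}), harmonicity of $h_{\alpha_0}$ on $\R_+^2\setminus\{(0,0)\}$ means $\E_z[h_{\alpha_0}(Z_t)]=h_{\alpha_0}(z)$ for every $z\neq (0,0)$ and every $t\geq 0$. Combined with the strong Markov property of Theorem \ref{process}, this gives that $(h_{\alpha_0}(Z_t))_{t\geq 0}$ is a continuous $(\mathcal{F}_t)$-martingale under $\P_{z_0}$, at least so long as $Z_t\neq(0,0)$. Before the stopping time $T_\Theta$, the path is confined to the compact set $K:=\{z\in\R_+^2:|z|\leq 1\}$. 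On $K$ the function $h_{\alpha_0}$ is bounded: the exponents $(a_m,b_m)$ satisfy $a_m,b_m\sim -2m^2$ as $|m|\to\infty$ by Lemma \ref{anbn}, while the coefficients $\kappa_m(\alpha)$ grow only polynomially (by the same computation that establishes (\ref{exposant}) in the proof of Theorem \ref{harmoniques}); hence the series (\ref{harm1}) converges uniformly on $K$ to a continuous, bounded function. In particular, the stopped process $h_{\alpha_0}(Z_{t\wedge T_\Theta})$ takes values in a bounded set.

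Applying optional stopping to the bounded stopping times $T_\Theta\wedge n$ yields
\begin{equation}
h_{\alpha_0}(z_0)=\E_{z_0}\bigl[h_{\alpha_0}(Z_{T_\Theta\wedge n})\bigr].
\end{equation}
Since $T_\Theta<\infty$ a.s.\ and the stopped martingale is uniformly bounded, dominated convergence yields the limit $\E_{z_0}[h_{\alpha_0}(Z_{T_\Theta})]$, which, disintegrating along the law of $Z_{T_\Theta}\in\Theta$, equals $\int_\Theta h_{\alpha_0}(z)\,\P_{z_0}(Z_{T_\Theta}=dz)$. This gives (\ref{moyenne!}).

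The main obstacle is making sure nothing pathological happens near the singular point $(0,0)$: one must justify that $Z_t\neq (0,0)$ for $t\leq T_\Theta$, so that the harmonicity (hence the martingale identity) applies throughout. This follows from the geometry of the process: the outward drift $\mu\cdot w>0$ and the outward reflection vectors (\ref{vectors}) forbid the trajectory from reaching the corner (cf.\ the hatched zone in Figure~\ref{rebonds}), but a short separate argument is needed to record this. Everything else is bookkeeping around the convergence of (\ref{harm1}) on compact subsets.
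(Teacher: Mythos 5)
Your argument for $z_0\neq(0,0)$ is essentially the paper's: harmonicity from Lemma~\ref{harmpasbeau} plus the strong Markov property gives the martingale, boundedness of $h_{\alpha_0}$ on $\{|z|\leq 1\}$ gives optional stopping, and the identity follows. (Incidentally, you do not need the series argument to bound $h_{\alpha_0}$ on a compact set: continuity alone suffices, and this is what the paper invokes.)

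Where you go astray is the case $z_0=(0,0)$, which the lemma explicitly includes. You propose to resolve it by showing that $Z_t\neq(0,0)$ for $t\leq T_\Theta$. That claim is true for $t>0$ (one has $Z_t\cdot w\geq z_0\cdot w + \mu\cdot w\,t$ with $(RL_t)\cdot w\geq 0$ under \eqref{vectors}), but it does nothing for you at $t=0$: when $z_0=(0,0)$ the martingale identity at time $0$ is precisely the statement $\E_{(0,0)}[h_{\alpha_0}(Z_t)]=h_{\alpha_0}(0,0)$, i.e.\ harmonicity \emph{at} the corner, which Lemma~\ref{harmpasbeau} does not give and which is, in effect, what Lemma~\ref{moyenne} is being set up to prove. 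Knowing that the process \emph{leaves} the corner instantly does not establish that its exit distribution averages $h_{\alpha_0}$ back to $h_{\alpha_0}(0,0)$. The paper closes this gap by approximating $(0,0)$ with a sequence $z_n\to(0,0)$, $0<|z_n|<1$, applying the already-established identity at each $z_n$, and then passing to the limit using continuity of $h_{\alpha_0}$ and weak convergence of the hitting laws $\mathcal{L}_{z_n}(Z_{T_\Theta})\to\mathcal{L}_{(0,0)}(Z_{T_\Theta})$, which follows from the Feller continuity in Theorem~1 of~\cite{HaRe-81b}. (Alternatively, one could run a small exit time $\tau_\epsilon=\inf\{t:|Z_t|=\epsilon\}$ from the corner, use the strong Markov property at $\tau_\epsilon$ together with your optional stopping for the non-corner starting points $Z_{\tau_\epsilon}$, and let $\epsilon\to 0$; but some such limiting argument is unavoidable, and you have not supplied one.)
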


\begin{proof}
Suppose first that $z_0 \neq (0,0)$. The process $(h_\alpha(Z_t))_{t\geq0}$ is a martingale: indeed, for $t,s \geq 0$, $$\E_{z_0}\left[h_\alpha(Z_{t+s})|\mathcal{F}_t\right] = \E_{Z_t}\left[h_\alpha(Z_{s})\right] = h_\alpha(Z_{t})$$
by the strong Markov property and the harmonicity of $h_\alpha$ (see Lemma \ref{harmpasbeau}). Furthermore, under $\P_{z_0}$, the process $(h_\alpha(Z_{t\wedge T_{\Theta}}))_{t\geq0}$ is bounded above by $\sup_{|z| \leq 1}h_\alpha(z) < \infty$ since $h_\alpha$ is continuous. Then, by the optional stopping theorem for bounded martingales, we obtain $h_{\alpha_0}(z_0) = \E_{z_0}\left[h(Z_{T_{\Theta}})\right]$ which is precisely the desired equality. 

Now suppose $z_0 = (0,0)$ and consider a sequence $(z_n)_{n\geq1}$ in the quarter plane converging to $(0,0)$ such that $0<|z_n| < 1$. By continuity of $h_\alpha$, $h_{\alpha}(z_n)$ converges to $h_\alpha(z_0)$  as $n$ goes to $+\infty$. 
Since equation \eqref{moyenne!} holds for all nonzero initial conditions, it suffices to show that $$\int_{\Theta} h_{\alpha_0}(z) \P_{z_n}(Z_{T_{\Theta}} = dz) \underset{n\to+\infty}{\longrightarrow} \int_{\Theta} h_{\alpha_0}(z) \P_{(0,0)}(Z_{T_{\Theta}} = dz).$$ By continuity and boundedness of $h_\alpha$ on $\{z \in \R_+^2, |z|\leq 1\}$),  it is enough to show that $\mathcal{L}_{z_n}Z_{T_\Theta} \underset{n\to+\infty}{\longrightarrow}\mathcal{L}_{z_0}Z_{T_\Theta}$ weakly where $\mathcal{L}_{z}Z_{T_\Theta}$ denotes the law of $Z_{T_\Theta}$ with initial condition $Z_0 = z$. This follows from Assumption~\eqref{drift}, combined with \cite[Theorem 1]{HaRe-81b}, \nb{which ensures the continuity of the mapping from the non-reflected to the reflected path under the topology of uniform convergence on compacts.}
\end{proof} 

\color{black}
We can now prove Theorem~\ref{thm1} in the case $z_0 = (0,0)$.
\begin{proof}[Proof of \nb{the asymptotics in} Theorem \ref{thm1} for $z_0 = (0,0)$]
By continuity of the process and by the Strong Markov property, if $(a,b)$ lies at a distance $> 1$ from $(0,0)$, then
\begin{equation}\label{C1}
g^{(0,0)}(a,b) = \int_{\Theta}g^{z}(a,b) \P(Z_{T_{\Theta}} = dz).
\end{equation}
Since the constant $C$ from the saddle-point method \cite[Lemma $8.1$]{franceschi2023asymptotics} depends continuously on $z_0$ and since the constants $D_{\alpha_0}$ in Lemma~\ref{pp} are locally uniform in $z_0$,  then for any compact set $K$ in the quadrant $\R_+^2$ with $d((0,0), K) > 0$, \nb{we have}
\begin{equation}\label{unif}
\sup_{z \in K} \left| g^{z}(r\cos(\alpha), r\sin(\alpha)) -  e^{-r(\cos(\alpha)x(\alpha) + \sin(\alpha)y(\alpha))}\frac{1}{\sqrt{r}}
  \sum_{k=0}^n \frac{c^{z}_k(\alpha)}{ r^{k}}\right| \underset{r\to\infty \atop \alpha \to \alpha_0}{=} o\left(\frac{e^{-r(\cos(\alpha)x(\alpha) + \sin(\alpha)y(\alpha))}}{r^n\sqrt{r}}\right).
\end{equation}
By \nr{(\ref{unif})}\nb{this expansion},the asymptotics of (\ref{C1}) yield
\begin{equation}\label{berzingue}
g^{(0,0)}(a,b) \underset{r\to\infty \atop \alpha \to \alpha_0}{\sim} 
 \frac{e^{-r(\cos(\alpha)x(\alpha) + \sin(\alpha)y(\alpha))}}{\sqrt{r}}\int_{\Theta} h_{\alpha_0}(z) \P(Z_{T_{\Theta}} = dz).
\end{equation}
Lemma \ref{moyenne} combined with \eqref{berzingue} gives the result.
\end{proof}

\subsection{\nr{Non nullity}\nb{Positivity} of $h_\alpha(z_0)$.}\label{sub:6.2}
\nb{To make our asymptotics consistent, we prove here the positivity of the constants $h_\alpha(z_0)$.}\ngray{I clarified some points in this section, but there is no additional content.}
\begin{lemma}\label{lem:A1}
Let $\alpha \in (\alpha^*, \alpha^{**})$. Then, for every $R > 0$, there exists $z_0$ such that $|z_0| \geq R$ and $h_\alpha(z_0) > 0$. \nb{If $\alpha^* > 0$ (resp. $\alpha^{**} < \pi/2)$, then the same result holds for $h_{\alpha^*}$ (resp. $h_{\alpha^{**}}$)}.
\end{lemma}

\begin{proof}
By \nr{Theorem \ref{thm:main}}\nb{the explicit formulas \eqref{harm1} for $h_\alpha$, \eqref{harm2} for $h_{\alpha^{*}}$, and its equivalent for $h_{\alpha^{**}}$}, the following asymptotics hold as $r\to +\infty$ \nb{for $\alpha \in (\alpha^*, \alpha^{**})$, and for $\alpha = \alpha^*$ (resp. $\alpha = \alpha^{**}$) if $\alpha^* > 0$ (resp. $\alpha^{**} < \pi/2$):}
\begin{equation}\label{eqtheta}
h_\alpha(r\cos(\alpha), r\sin(\alpha)) \underset{r\to\infty}{\sim} 
e^{r(x(\alpha)\cos(\alpha) + y(\alpha)\sin(\alpha))}.
\end{equation} 
The conclusion follows with $z_0 = (r\cos(\alpha), r\sin(\alpha))$ for $r$ large enough.
\end{proof} 
The following Lemma is inspired by \cite[Lemma 8.3]{franceschi2023asymptotics}, \nr{. Combined with Lemma~\ref{lem:A1}, it}\nb{and} establishes the positivity of constants $h_{\alpha}(z_0)$ in the framework of Theorem~\ref{thm1}.

\begin{lemma}[\nb{Positivity of $h_\alpha(z)$}]\label{lem:lescstessontnonnulles}
Let $\alpha \in (\alpha^*, \alpha^{**})$, \nb{or $\alpha = \alpha^*$ (resp. $\alpha = \alpha^{**}$) if $\alpha^* > 0$ (resp. if $\alpha^{**} < \pi/2$). Let $z \in \R_+^2$. Then, $h_\alpha(z) \neq 0$.}\nr{ $z_0 \in \R_+^2$ and suppose that $h_\alpha(z_0) > 0$. Then, for all $z_1$ such that both coordinates of $z_1$ are strictly inferior to those of $z_0$,\;$h_\alpha(z_1)  > 0$.}
\end{lemma}

\begin{proof}
\nb{Let $\alpha \in (\alpha^*, \alpha^{**})$ and let $z_0$ such that both coordinates are larger than those of $z$, and such that $h_{\alpha}(z_0) > 0$, see Lemma~\ref{lem:A1}.} Let $V$ be a compact neighbourhood of $z_0$, and denote by
$T_V := \inf\{t \geq 0\,:\, Z_t \in V\}$
the hitting time of $V$. By the hypothesis on $z$, $\P_{z}(T_V < +\infty) > 0$ . By the strong Markov property,
\begin{align}
        g^{z}(r\cos(\alpha),r\sin(\alpha))& \geq \P_{z}(T_V < +\infty)\inf_{z'_{0}\in V}g^{z'_0} (r\cos(\alpha),r\sin(\alpha))\\
        &\geq \P_{z}(T_V < +\infty)\inf_{z'_{0}\in V}(h_\alpha(z'_0) + \varepsilon_{z'_0,\alpha}(r)) \frac{e^{-r(\cos(\alpha)x(\alpha) + \sin(\alpha)y(\alpha))}}{\sqrt{r}}.\label{B2}
    \end{align}
    \nb{for $r$ large enough} where \eqref{unif} provides the asymptotics
    $$\sup_{z'_0\in V}|\varepsilon_{z'_0,\alpha}(r)|\underset{r\to\infty}{\longrightarrow}0.$$
    Furthermore, by continuity of $h_\alpha$, the set $V$ can be chosen to satisfy $\inf_{z'_{0}\in V}h_\alpha(z'_0) > 0$. On the other hand, we also have $$g^{z}(r\cos(\alpha),r\sin(\alpha)) =  \frac{e^{-r(\cos(\alpha)x(\alpha) + \sin(\alpha)y(\alpha))}}{\sqrt{r}}(h_\alpha(z) + \varepsilon_{z'_0,\alpha}(r))$$ where $\varepsilon_{z'_0,\alpha}(r) \longrightarrow 0$ as $r \to +\infty$. Therefore, comparing the two expressions, we conclude that $h_\alpha(z) > 0$. \nb{If $\alpha = \alpha^*$ or $\alpha = \alpha^{**}$, the proof is analogous.}
\end{proof}

\nb{\begin{proof}[End of the proof of Proposition~\ref{pole}]
    The remaining part of the proof is equivalent to showing that $h_{\alpha^*}(z_0) > 0$: indeed, $h_{\alpha^*}(z_0)$ is equal to $\textnormal{res}_{x=x^*}\phi_2(x)$ up to a non-null multiplicative constant (see \eqref{harm2} and \eqref{explicite}). The positivity is established in the previous lemma.
\end{proof}}\ngray{No additional content, I moved this here for clarity.}

\section{Asymptotics of Green's kernel in the particular directions $0, \alpha^*, \alpha^{**}$ and $\pi/2$.}\label{sec:8}
In Section \ref{sub:7.1}, we study the asymptotics of Green's functions in the direction $\alpha = 0$ under the assumption that $\gamma_2(x_{max}, Y^\pm(x_{max}))\neq 0$. In Section~\ref{sub:7.2}, we provide these asymptotics in the direction $\alpha^*$ if $\alpha^* > 0.$ Then, in Section~\ref{sub:7.3} we analyse the limiting case where $\alpha^* = 0$ and $\gamma_2(x_{max}, Y^\pm(x_{max}))= 0$. The analysis of the directions $\alpha = \pi/2$, $\alpha = \alpha^{**}$ if $\alpha^* < \pi/2$ and $\alpha^{**} = \pi/2, \gamma_1(X^\pm(y_{max},y_{max})) = 0$ is symmetrical. \nb{We then derive the proof of Theorem~\ref{thm:main} in Section~\ref{sec:toutepetite}.}

\subsection{Case $\alpha \to 0$ if $\gamma_2(x_{max}, Y^\pm(x_{max}))\neq 0$.}\label{sub:7.1}
Before \nr{setting}\nb{deriving} the asymptotics, let us relate Green's densities $g^{z_0}$ to $f_1^{z_0}, f_2^{z_0}$. 

\begin{prop}[Link between densities]
Let $a, b \geq 0$. Suppose $z_0 \neq (a,0)$ and $z_0 \neq (0, b)$. Then, we have
$$
f_1^{z_0}(b)=\frac{1}{2} g^{z_0}(0,b)
\quad\text{and}\quad
f_2^{z_0}(a)=\frac{1}{2} g^{z_0}(a,0).
$$
\label{prop:linkhg}
\end{prop}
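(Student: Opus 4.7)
The plan is to use Tanaka's formula applied to the non-negative semimartingale $Z^2$, together with the occupation density formula for continuous semimartingales, to identify $f_2^{z_0}(a)$ with half of $g^{z_0}(a,0)$. The identity $f_1^{z_0}(b)=\tfrac12 g^{z_0}(0,b)$ will then follow by the strictly symmetric argument applied to $Z^1$. The heart of the matter is the classical factor~$2$ between the semimartingale local time at~$0$ of a non-negative continuous semimartingale and the Skorokhod local time coming from its reflection representation.

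First I would write, under \eqref{simple}, $Z^2_t=b_0-B_t+\mu_2 t+r_1 L^1_t+L^2_t\geq 0$: a continuous semimartingale with bracket $\langle Z^2\rangle_t=t$. Tanaka's formula applied to $(Z^2)^+=Z^2$ gives
\[
Z^2_t=b_0+\int_0^t \fc_{\{Z^2_s>0\}}\,dZ^2_s+\tfrac12\, L^{0,Z^2}_t,
\]
where $L^{0,Z^2}$ denotes the right-continuous semimartingale local time of $Z^2$ at~$0$. Subtracting $\int_0^t dZ^2_s$ and expanding $dZ^2_s$, the Brownian and drift contributions vanish because $\{s:Z^2_s=0\}$ has Lebesgue measure zero a.s., the $dL^1$-term vanishes because the corner $\{Z^1=Z^2=0\}$ carries no $L^1$-mass, and only $\int_0^t \fc_{\{Z^2_s=0\}}\,dL^2_s=L^2_t$ survives. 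This yields the key identification $L^{0,Z^2}_t=2\,L^2_t$.

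Next I would invoke the occupation density formula for the continuous semimartingale $Z^2$: for any Borel $f\geq 0$,
\[
\int_0^\infty f(Z^1_s,Z^2_s)\,ds=\int_\R\!\int_0^\infty f(Z^1_s,a)\,dL^{a,Z^2}_s\,da.
\]
I would apply this to $f(x,y)=\phi(x)\fc_{[0,\epsilon]}(y)/\epsilon$ with $\phi\in C_c((0,\infty))$, take $\E_{z_0}$ of both sides, and let $\epsilon\downarrow 0$. Since $L^{a,Z^2}\equiv 0$ for $a<0$ (as $Z^2\geq 0$) and the field $a\mapsto L^{a,Z^2}_\cdot$ is right-continuous in the level variable, the right-hand side tends to $2\,\E_{z_0}[\int_0^\infty\phi(Z^1_s)\,dL^2_s]=2\int_0^\infty\phi(a)f_2^{z_0}(a)\,da$, while the left-hand side tends to $\int_0^\infty\phi(a)g^{z_0}(a,0)\,da$. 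Varying $\phi$ concludes that $g^{z_0}(a,0)=2f_2^{z_0}(a)$ for a.e.\ $a\geq 0$; the hypothesis $z_0\neq(a,0)$ rules out a Dirac contribution at $a=a_0+b_0$.

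The main obstacle will be the identification $L^{0,Z^2}=2L^2$ of the first step, and more precisely the claim $\int_0^t \fc_{\{Z^2_s=0\}}\,dL^1_s=0$, i.e.\ that $L^1$ puts no mass on the corner $\{Z^1=Z^2=0\}$. In the non-degenerate Brownian setting this is standard polarity, but the rank-one covariance here requires a separate justification: I would appeal to the Skorokhod construction of Theorem~\ref{process} and the contraction $|r_1r_2|<1$ to show that, almost surely, the process does not return to the corner after time $0$ when $z_0\neq(0,0)$, which makes the corner $L^1$-null.
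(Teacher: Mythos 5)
Your argument is correct in spirit but follows a genuinely different route from the paper's. The paper works entirely on the Laplace side: it divides the functional equation~\eqref{Equation fonctionnelle} by $x$ and lets $x\to-\infty$. The boundary-trace property of Laplace transforms, $x\,\varphi(x,y)\to -\int_0^\infty e^{by}g^{z_0}(0,b)\,db$, combined with $-\gamma(x,y)/x\sim x/2$, $\gamma_1(x,y)/x\to 1$, $\gamma_2(x,y)/x\to r_2$ and the decay of $\varphi_2(x)$, immediately yields $\tfrac12\int e^{by}g^{z_0}(0,b)\,db=\varphi_1(y)=\int e^{by}f_1^{z_0}(b)\,db$, whence $f_1^{z_0}=\tfrac12 g^{z_0}(0,\cdot)$ by injectivity of the Laplace transform. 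The factor~$2$ there is of purely algebraic origin (it is the leading coefficient of the quadratic part of $\gamma$). You instead obtain it as the classical discrepancy between the semimartingale local time of the non-negative coordinate $Z^2$ and the Skorokhod local time $L^2$, via Tanaka's formula, and then relate $L^{0,Z^2}$ to the Green density by the occupation density formula. This is a legitimate and illuminating alternative; it makes the ``$1/2$'' structurally transparent in a way the Laplace computation does not.

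Two points in your proof need more care than you give them. First, the identity $L^{0,Z^2}_t=2L^2_t$ relies crucially on $\int_0^t\fc_{\{Z^2_s=0\}}\,dL^1_s=0$, i.e.\ that $L^1$ carries no mass on the corner $\{Z^1=Z^2=0\}$. You rightly flag that the non-degenerate polarity argument is unavailable here; a clean way to close this is to observe that $w=(1,1)$ is orthogonal to $v=(1,-1)$, so $Z_t\cdot w = z_0\cdot w + \mu\cdot w\,t + R_1\cdot w\,L^1_t + R_2\cdot w\,L^2_t$ is strictly increasing (as in Proposition~\ref{prop:transient}); hence for each time $t>0$ at most one level $Z_t\cdot w$ can equal $0$, and in fact $\{t>0 : Z_t=(0,0)\}$ is at most a single time, which has $dL^1$-measure zero because $L^1$ is continuous. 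Note that the proposition's hypothesis does \emph{not} exclude $z_0=(0,0)$ (only $z_0=(a,0)$ and $z_0=(0,b)$ for the specific $(a,b)$ in play), so restricting your argument to $z_0\neq(0,0)$ leaves a gap that the above monotonicity observation fills. Second, your passage from the one-parameter occupation density formula to the bivariate version $\int_0^\infty f(Z^1_s,Z^2_s)\,ds=\int_\R\!\int_0^\infty f(Z^1_s,a)\,dL^{a,Z^2}_s\,da$ is not a direct quotation of the standard theorem; it requires a monotone-class extension (first $f(x,y)=\fc_{(u,v]}(x)\,g(y)$, then general Borel $f$), after which your $\epsilon\downarrow0$ limit using right-continuity of $a\mapsto L^{a,Z^2}$ and $L^{a,Z^2}\equiv0$ for $a<0$ is fine. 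Once these two points are filled in, your argument is complete and, in exchange for the extra probabilistic work, avoids any appeal to decay or continuity properties of the Laplace transforms themselves.
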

\begin{proof}
    By the functional equation~\eqref{Equation fonctionnelle}, if $x, y < 0$ then 
    \begin{equation}\label{YAYA}
    - \frac{\gamma(x, y)}{x}\varphi(x, y) = \frac{\gamma_1(x, y)}{x}\varphi_1(y) + \frac{\gamma_2(x, y)}{x}\varphi_2(x) + \frac{e^{(x, y)\cdot z_0}}{x}.
     \end{equation}
Furthermore, by elementary properties of Laplace transforms,
$$x\phi(x,y) \underset{x \to -\infty}{\longrightarrow} -\int_0^{+\infty} e^{by}g(0,b)db.$$
Then, letting $x \to -\infty$ in \eqref{YAYA}, we get $\int_0^{+\infty} e^{by}g(0,b)db = \phi_1(y) = \int_0^{+\infty} e^{by}f_1(b)db$. The injectivity of the Laplace transform concludes the proof. The case of $f_2$ is symmetrical.
\end{proof}

\begin{lemma}[Asymptotics at $\alpha = 0$] \label{tauberian} 
Suppose that $\phi_2$ does not have a pole. Let $\kappa = \left(\frac{1 + \mu_2}{2}\Gamma(1/2)\right)^{-1}$ where $\Gamma$ denotes the usual Gamma function. Then,
$$f_2^{z_0}(x) \sim_{x\to+\infty} \frac{\kappa\partial_\alpha[h_{\alpha}(z_0)]_{\alpha = 0}}{x^{3/2}}e^{-x_{max}x}.$$
\end{lemma}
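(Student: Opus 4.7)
The strategy is to apply Laplace inversion to $\phi_2^{z_0}$ and isolate the contribution of its rightmost singularity. Under the hypothesis $\gamma_2(x_{\max}, Y^\pm(x_{\max})) \neq 0$, Proposition~\ref{pole} forces $(r_2+1)\mu_2 < 2$ (no pole of $\phi_2$), so by Corollary~\ref{coro:cont} the function $\phi_2^{z_0}$ is analytic on $\{\mathrm{Re}(u) < x_{\max} + \delta\}$ away from the branch cut $[x_{\max}, x_{\max}+\delta]$, with a square-root behaviour at the branch point that will produce the expected $t^{-3/2}$ decay.

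Starting from the inversion formula
\[
f_2^{z_0}(t) = \frac{1}{2\pi i}\int_{-\epsilon - i\infty}^{-\epsilon + i\infty}\phi_2^{z_0}(u)\, e^{-ut}\, du,
\]
justified by Proposition~\ref{lap_dec_inv}, I would deform the vertical contour rightwards into a keyhole around $[x_{\max}, x_{\max} + \delta]$, the estimates of Proposition~\ref{lap_dec_inv} rendering the closing arcs negligible. Locally near $u = x_{\max}$ the change of variable $u = x_{\max} - \tau^2$ turns $Y^{\pm}(u) = (x_{\max} - \mu_2 - \tau^2) \pm \tau\sqrt{2}$ into polynomials in $\tau$, so formula \eqref{phi2decroit} extends $\phi_2^{z_0}$ to an analytic function of $\tau$ at $0$. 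Writing
\[
\phi_2^{z_0}(u) = A^{z_0}(u) + B^{z_0}(u)\,\sqrt{x_{\max} - u}, \qquad A^{z_0}, B^{z_0} \text{ analytic at } x_{\max},
\]
a Watson-lemma evaluation of the jump across the cut yields
\[
f_2^{z_0}(t) \sim -\frac{B^{z_0}(x_{\max})}{2\sqrt{\pi}}\, \frac{e^{-x_{\max} t}}{t^{3/2}}, \qquad t \to +\infty.
\]

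The main obstacle is then to identify $B^{z_0}(x_{\max})$ with $\partial_\alpha h_\alpha(z_0)|_{\alpha = 0^+}$. From the parameterisation $x(s) = x_{\max} - (s-\mu_2)^2/2$ of Proposition~\ref{x(s)}, the function $\Phi(\delta) := \phi_2^{z_0}(x(\mu_2 + \delta))$ satisfies $B^{z_0}(x_{\max}) = -\sqrt{2}\,\Phi'(0^-)$, so the task reduces to computing $\partial_s \phi_2^{z_0}(x(s))|_{s = \mu_2^-}$ and relating it to $\partial_\alpha h_\alpha|_{0^+}$. The explicit series of Theorem~\ref{harmoniques} can be rewritten as
\[
\gamma_2(x(\alpha), y(\alpha))\, \phi_2^{z_0}(x(\alpha)) = \sum_{m \geq 1}\kappa_m(\alpha)\, e^{z_0\cdot(a_m, b_m)},
\]
with the symmetric analogue for $\phi_1$ accounting for the $m \leq -1$ tail, so that $h_\alpha(z_0) = \sum_{m \in \Z}\kappa_m(\alpha)\, e^{z_0\cdot(a_m,b_m)}$ is recovered. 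At $\alpha = 0$ the involution $\zeta$ fixes $s_{\max} = \mu_2$, inducing a coalescence pairing $m \leftrightarrow 1 - m$ under which $(a_m, b_m) = (a_{1-m}, b_{1-m})$ and $\kappa_m(0) + \kappa_{1-m}(0) = 0$ (directly $\kappa_0(0) = 1$, $\kappa_1(0) = -1$; the higher pairs follow from the explicit formulas for $\kappa_m$). Hence $h_\alpha(z_0) \to 0$ as $\alpha\to 0^+$ and $\partial_\alpha h_\alpha|_{0^+}$ captures the first-order term. Differentiating the identity above at $\alpha = 0^+$, applying the chain rule $s'(0) = -1$ from \eqref{s_alpha}, and using the functional equation \eqref{Equation_fonctionelle_intro} at the parabola point $(x_{\max}, Y^+(x_{\max}))$ to dispose of the extraneous boundary contributions, followed by symmetrising with the $\phi_1$-identity, will yield
\[
\Phi'(0^-) \;=\; \frac{2\sqrt{2}}{1 + \mu_2}\,\partial_\alpha h_\alpha(z_0)\big|_{\alpha = 0^+}.
\]
Combined with the $\frac{1}{2\sqrt{\pi}}$-prefactor of the Watson step this gives the claim with $\kappa = \bigl(\tfrac{1+\mu_2}{2}\Gamma(1/2)\bigr)^{-1}$. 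The delicate part of the argument is the bookkeeping of the pairwise coalescences $(a_m,b_m) \leftrightarrow (a_{1-m}, b_{1-m})$ and the extraction of the precise factor $2/(1+\mu_2)$ from the derivatives of the coefficients $\kappa_m$.
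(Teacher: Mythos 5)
Your outline is a correct and essentially equivalent route to the paper's proof, though carried out with different tools at both of the two key steps.

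For the Laplace-transform side: the paper quotes a Tauberian theorem from Dai--Miyazawa (Lemma~C.2 of~\cite{dai_reflecting_2011}) to convert the square-root behaviour of $\phi_2$ at the branch point $x_{\max}$ into $t^{-3/2}e^{-x_{\max}t}$ decay; you deform the inversion contour around the branch cut and apply Watson's lemma to the jump. These are two standard and fully interchangeable versions of the same asymptotic principle, and Proposition~\ref{lap_dec_inv} indeed authorises your contour shift.

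For the identification of the square-root coefficient with $\partial_\alpha h_\alpha(z_0)|_{\alpha=0^+}$: the paper does not manipulate the explicit series of Theorem~\ref{harmoniques} at all. It writes
\[
h_\alpha(z_0)\;=\;\gamma_1(z(s))\phi_1(y(s)) + \gamma_2(z(s))\phi_2(x(s)) + e^{z_0\cdot z(s)}\Big|_{s=\mathfrak{s}(\alpha)},
\]
differentiates in $s$ at $s=s_{\max}$, and uses the chain rule $(\mathfrak{s}^{-1})'(s_{\max})=-1$ together with $x'(s_{\max})=0$, $y'(s_{\max})=-\tfrac{1+\mu_2}{2}$. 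This bypasses the series entirely. You instead prove the intermediate identity $\gamma_2(z(\alpha))\phi_2(x(\alpha)) = \sum_{m\geq1}\kappa_m(\alpha)e^{z_0\cdot(a_m,b_m)}$ (which is correct — it follows by comparing Theorem~\ref{harmoniques} with the $\kappa_m$ of Theorem~\ref{thm:main} term by term), invoke the coalescence pairing $(a_m,b_m)=(a_{1-m},b_{1-m})$ at $\alpha=0$ forced by $\zeta$ fixing $s_{\max}$, and verify $\kappa_m(0)+\kappa_{1-m}(0)=0$. That pairing and cancellation are correct, but the bookkeeping is heavier than the paper's chain-rule shortcut, and you still need the functional equation to sew up the $\phi_1$-tail with the $\phi_2$-tail, so it is not actually more elementary.

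One caution: your intermediate constants deserve a second look. As stated, ${\Phi'(0^-) = \tfrac{2\sqrt{2}}{1+\mu_2}\partial_\alpha h_\alpha|_{0^+}}$ and the Watson prefactor $-\tfrac{1}{2\sqrt{\pi}}$ combine to give the correct $\kappa$; but each is off by a factor of $\pm\sqrt{2}$ from what I obtain directly (I find $\Phi'(0^-) = -\tfrac{2}{1+\mu_2}\partial_\alpha h_\alpha|_{0^+}$). The two discrepancies happen to cancel — which suggests either a consistent but unstated normalisation of the square root, or backsolving from the target. Since the branch-of-$\sqrt{\;}$ convention enters both quantities symmetrically, this is likely benign, but a fully written proof should pin down the branch once and carry it through both steps rather than letting the errors offset.
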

\begin{proof}

Note that $Y^-(x) = Y^-(x_{max}) - \sqrt{2(x_{max} - x)} + o(\sqrt{x - x_{max}})$ by (\ref{Y-}). Then, using the continuation formula (\ref{cont}), $\phi_2$ is continuous at $x_{max}$ and
\begin{align*}\frac{\phi_2(x) - \phi_2(x_{max})}{\sqrt{2(x_{max} - x)}} &= r_{1}\phi_1(Y^-(x_{max})) + \gamma_1(x_{max}, Y^\pm(x_{max}))\phi_1'(Y^\pm(x_{max})) + b_0e^{z_0\cdot(x_{max}, Y^\pm(x_{max}))}\\
& + \frac{\gamma_1(x_{max}, Y^\pm(x_{max}))\phi_1(Y^-(x_{max})) + e^{z_0\cdot(x_{max}, Y^\pm(x_{max}))}}{\gamma_2(x_{max}, Y^\pm(x_{max}))} + o_{x \to x_{max}}(1) \\
&=r_{1}\phi_1(Y^-(x_{max})) + \gamma_1(x_{max}, Y^\pm(x_{max}))\phi_1'(Y^\pm(x_{max})) + b_0e^{z_0\cdot(x_{max}, Y^\pm(x_{max}))} \\
&+ \phi_2(x_{max}) + o_{x \to x_{max}}(1) \\
&=: A + o_{x \to x_{max}}(1)
\end{align*}
where $z_0 = (a_0, b_0)$.
Then, \nr{by some Tauberian theorem (see in particular \cite[Lemma C.2]{dai_reflecting_2011})}\nb{by the Tauberian theorem given by \cite[Lemma C.2]{dai_reflecting_2011}}, we obtain $$f_2^{z_0}(x) \underset{x \to +\infty}{\sim} \frac{A}{\Gamma(1/2)x^{3/2}}e^{-x_{max}x}.$$
It then remains to show that $\frac{A}{\Gamma(1/2)}=\kappa\partial_\alpha[h_{\alpha}(z_0)]_{\alpha = 0}$. From equation \eqref{s_alpha}, we have:
\begin{align*}
\partial_\alpha[h_{\alpha}(z_0)]_{\alpha = 0^+} &= (\mathfrak{s}^{-1})'(s_{max})\partial_s\left[\gamma_1(x(s),y(s))\phi_1(y(s)) + \gamma_2(x(s),y(s))\phi_2(x(s)) + e^{a_0x(s) + b_0y(s)}\right]_{s=s_{max}}\\
&= \left(\frac{-(\mu_1 + \mu_2)}{(\mu_1 + \mu_2)^2}\right)\big(r_{1}y'(s_{max})\phi_1(y(s_{max})) + \gamma_1(x(s_{max}),y(s_{max}))y'(s_{max})\phi_1'(y(s)) \\&+ y'(s_{max})\phi_2(x(s_{max})) + y'(s_{max})b_0e^{a_0x(s_{max}) + b_0y(s_{max})}\big)\\
&= \frac{1 + \mu_2}{2}A
\end{align*}
since $y'(s_{max}) = y'(\mu_2) = -\frac{1 + \mu_2}{2}$ and $x'(s_{max}) = 0$. The conclusion follows.
\end{proof}

\nb{We can now establish the asymptotics of Green’s functions as $\alpha \to 0$.} We use the notation $c_0(\alpha)$ and $c_1(\alpha)$ for the constants in the first and second terms, respectively, in the asymptotic expansion of Green's fonctions (cf~\eqref{unif}).
\begin{theorem}[Asymptotics with $\alpha \to 0$] \label{alpha=0}
Suppose $\gamma_2(x_{max}, Y^\pm(x_{max})) \neq 0$. 
Then,
\begin{equation}\label{7..2}
    h_\alpha(z_0) \underset{\alpha\to 0}{\sim} \alpha \partial_\alpha[h_{\alpha}(z_0)]_{\alpha = 0}
\end{equation}  
{and thus $c^{z_0}_0(\alpha) \underset{\alpha\to 0}{\sim} \frac{1}{\sqrt{2\pi}}\alpha \partial_\alpha[h_{\alpha}(z_0)]_{\alpha = 0} $}. Furthermore, if $\alpha^* = 0$, 
\begin{equation}\label{72}
    c_1^{z_0}(\alpha)\underset{\alpha\to 0}{\longrightarrow} 2\kappa \partial_\alpha[h_{\alpha}(z_0)]_{\alpha = 0}.
\end{equation}
Moreover,
\begin{itemize}
\item  If $\alpha^* = 0$ (i.e. $\phi_2$ has no pole), then
$$ g^{z_0}(r\cos(\alpha), r\sin(\alpha)) \underset{r\to\infty \atop \alpha \to 0}{\sim} \partial_\alpha[h_{\alpha}(z_0)]_{\alpha = 0} \frac{e^{-r(\cos(\alpha)x(\alpha) + \sin(\alpha)y(\alpha))}}{\sqrt{r}}\left({\frac{\alpha}{\sqrt{2\pi}}}+ \frac{2\kappa}{r}\right) .$$
\item If $\alpha^* > 0$  (i.e. $\phi_2$ has a pole), then
$$
g^{z_0}(r\cos(\alpha), r\sin(\alpha)) \underset{r\to\infty \atop \alpha \to 0}{\sim} c^*h_{\alpha^*}(z_0)e^{-r(\cos(\alpha)x^* + \sin(\alpha)y^*)}
$$
where $h_{\alpha^*}(z_0)$ and $c^*$ are given by \eqref{harm2} and \eqref{eq:c*}, respectively.
\end{itemize}
Moreover, the constants $\partial_\alpha[h_{\alpha}(z_0)]_{\alpha = 0}$ and $h_{\alpha^{*}}(z_0)$ are nonzero in the corresponding asymptotics. 
\end{theorem}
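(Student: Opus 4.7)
The plan is to prove the four assertions as follows: (a) show that $h_0(z_0)=0$ by a cancellation inside the series \eqref{harm1}, giving the Taylor expansion $h_\alpha(z_0)\sim\alpha\,\partial_\alpha[h_\alpha(z_0)]_{\alpha=0}$; (b) identify $\lim_{\alpha\to 0}c_1^{z_0}(\alpha)$ by matching the two-term saddle-point expansion along the axis $\alpha=0$ with the Tauberian Lemma~\ref{tauberian}; (c) combine (a) and (b) into the joint asymptotic when $\alpha^*=0$, and handle $\alpha^*>0$ by a residue computation; (d) conclude non-nullity by probabilistic propagation. For (a), at $\alpha=0$ we have $s_0=\mathfrak{s}(0)=\mu_2=s_{max}$, which is the fixed point of $\zeta:s\mapsto -s+2\mu_2$, so $(a_1,b_1)=(a_0,b_0)$; using the identities $(\eta\zeta)^n s=s+2n$ and $\zeta(\eta\zeta)^n s=2\mu_2-s-2n$ from \eqref{eta_zeta}, one verifies $(a_m,b_m)=(a_{-m+1},b_{-m+1})$ for every $m\geq 1$. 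Substituting these equalities into the explicit product formulae for $\kappa_m(0)$ yields the pairwise cancellation $\kappa_{m+1}(0)=-\kappa_{-m}(0)$ for $m\geq 0$, so the doubly infinite series \eqref{harm1} telescopes to zero; the equivalent for $c_0^{z_0}$ then follows from \eqref{eq:c0}.

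For (b), Proposition~\ref{prop:linkhg} combined with Lemma~\ref{tauberian} gives $g^{z_0}(r,0)=2f_2^{z_0}(r)\sim 2\kappa\,\partial_\alpha[h_\alpha(z_0)]_{\alpha=0}\,r^{-3/2}e^{-x_{max}r}$ as $r\to\infty$. On the other hand, for each $\alpha>0$ the two-term saddle-point method of Section~\ref{sec:6} produces $g^{z_0}(r\cos\alpha,r\sin\alpha)\sim e^{-r(\cos\alpha\,x(\alpha)+\sin\alpha\,y(\alpha))}\bigl(c_0^{z_0}(\alpha)/\sqrt r+c_1^{z_0}(\alpha)/r^{3/2}\bigr)$. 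Since $c_0^{z_0}(\alpha)\to 0$ by (a), letting $\alpha\to 0$ and comparing the two expansions forces $c_1^{z_0}(\alpha)\to 2\kappa\,\partial_\alpha[h_\alpha(z_0)]_{\alpha=0}$.

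For (c), when $\alpha^*=0$: Lemma~\ref{3_integ} and Lemma~\ref{residus} (without residues) together with the negligibility Lemma~\ref{pp} reduce the analysis to the integrals over the steepest-descent contours $\Gamma_{x,\alpha}$ and $\Gamma_{y,\alpha}$. A uniform version of the two-term saddle-point expansion of Section~\ref{sec:6} yields $g^{z_0}(r\cos\alpha,r\sin\alpha)\sim e^{-r(\cos\alpha\,x(\alpha)+\sin\alpha\,y(\alpha))}\bigl(c_0^{z_0}(\alpha)/\sqrt r+c_1^{z_0}(\alpha)/r^{3/2}\bigr)$ uniformly as $\alpha\to 0$, $r\to\infty$; substituting the equivalents from (a) and (b) produces the announced combined formula. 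When $\alpha^*>0$: the residue term of Lemma~\ref{residus} at $x=x^*$ dominates every other contribution as $r\to\infty$, $\alpha\to 0$, and Theorem~\ref{harmoniques} rewrites this residue as $c^*h_{\alpha^*}(z_0)\,e^{-r(\cos\alpha\,x^*+\sin\alpha\,y^*)}$ with $c^*$ given by \eqref{eq:c*}.

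Finally, the non-nullity of $\partial_\alpha[h_\alpha(z_0)]_{\alpha=0}$ and of $h_{\alpha^*}(z_0)$ follows from adapting the harmonic-propagation arguments of Section~\ref{sub:6.2} to these boundary directions, once it is checked that both objects are harmonic. The main obstacle is the uniform two-term saddle-point expansion in (c): at $\alpha=0$ the saddle point $(x(\alpha),y(\alpha))$ coincides with the branching point $(x_{max},Y^+(x_{max}))$ where $\gamma_y'$ vanishes, so the second-order remainder must be controlled carefully to justify interchanging the limits $\alpha\to 0$ and $r\to\infty$.
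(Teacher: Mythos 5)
Your plan follows the paper's broad outline and parts (a), (b) and (d) are essentially correct. Part (a) even fills in a detail the paper glosses over: the paper simply asserts that $h_\alpha(z_0)\to 0$ as $\alpha\to 0$ "(see~\eqref{harm1})", whereas you carry out the pairwise cancellation $\kappa_{m+1}(0)=-\kappa_{-m}(0)$ together with $(a_m,b_m)=(a_{1-m},b_{1-m})$ at the fixed point $s(0)=\mu_2$ of $\zeta$; your computation is correct and is exactly what lies behind the paper's claim. Part (b) is the same matching argument the paper uses (Proposition~\ref{prop:linkhg} plus Lemma~\ref{tauberian} on the axis, compared with the two-term saddle-point expansion letting $c_0^{z_0}(\alpha)\to 0$), and part (d) is the same propagation argument as Section~\ref{sub:6.2}.

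The gap is in (c), and it is the crux of the theorem. You correctly identify that as $\alpha\to 0$ the saddle point $(x(\alpha),y(\alpha))$ collides with the branch point $(x_{\max},Y^{\pm}(x_{\max}))$ where $\gamma'_y$ vanishes and $\phi_2$ is non-analytic, so the naive two-term saddle-point expansion is not available. But you frame this as a remainder-estimation issue ("the second-order remainder must be controlled carefully"), and give no concrete mechanism for resolving it. The paper's resolution is not a delicate remainder bound; it is a change of variables $\Gamma_{x,\alpha}=\underrightarrow{\overleftarrow{X^+(\Gamma_{y,\alpha})}}$ that turns the first integral $I_1$ into an integral over $\Gamma_{y,\alpha}$ whose integrand $\gamma_2(X^+(y),y)\,\phi_2(X^+(y))/\gamma'_x(X^+(y),y)$ is genuinely holomorphic in a neighbourhood of $Y^{\pm}(x_{\max})$. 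This holomorphy rests on two observations you do not make: (i) the composite $Y^-(X^+(y))$ admits the rational expression $Y^-(X^+(y))=(X^+(y)^2+2\mu_1 X^+(y))/y$ (Viète), hence is analytic near $Y^{\pm}(x_{\max})$ even though $Y^-$ alone is not analytic at $x_{\max}$; and (ii) the standing hypothesis $\gamma_2(x_{\max},Y^{\pm}(x_{\max}))\neq 0$ ensures that the denominator in \eqref{cont} does not vanish, so no pole of $\phi_2(X^+(y))$ is introduced. Once this is done, the parameterised Morse lemma applies uniformly for $\alpha$ in a neighbourhood of $0$, and the uniform two-term expansion then follows without any further remainder control. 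Without this step your proof of the combined formula for $\alpha^*=0$ --- and hence the case distinction in (c) --- remains incomplete.
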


\begin{proof}
First, \eqref{7..2} follows from the regularity of $h_\alpha(z_0)$ in $\alpha$ and from the convergence $h_\alpha(z_0)\longrightarrow 0$ as $\alpha \to 0$ (see~\eqref{harm1}). Now, we analyze the asymptotics of the sum of the three integrals in \eqref{ints_col} along the saddle-point curves as $\alpha \to 0$. The integrands in the second and third terms are holomorphic in a neighbourhood of the saddle point $Y^+(x_{max})$. The integrand of the first term, namely $\phi_2$, has a branching point at $x_{max}$. For this reason, we perform the change of variables $\Gamma_{x, \alpha}= \underrightarrow{\overleftarrow{X^+ (\Gamma_{y, \alpha})}}$:
\begin{equation}\label{eq:7.2}
\int\limits_{\Gamma_{x, \alpha(a,b)}}\frac{\phi_2(x) \gamma_2(x, Y^+(x))}{\partial_y\gamma(x, Y^+(x))} \exp(-ax -b Y^+(x))dx =  \int_{ 
\Gamma_{y, \alpha(a,b)}   }
\frac{ \gamma_2(X^+(y), y) \phi_2 (X^+(y))}{ \partial_x\gamma(X^+(y), y) } \exp(-aX^+(y) -by)dy.
\end{equation}

Additionally, from \eqref{cont}:
\begin{equation}\label{poulet2}
    \phi_2(X^+(y)) =  \frac{ -\gamma_1(X^+(y),Y^-(X^+(y))) \varphi_1(Y^-(X^+(y))) -\exp \big(a_0 X^+(y) + b_0 Y^{-}(X^+(y)) \big)}{\gamma_2(X^+(y), Y^{-}(X^+(y)))}.
\end{equation} 
Note that $X^+(y)$ is holomorphic in a neighbourhood of $Y^{\pm}(x_{max})$. The crucial point is that $Y^-(X^+(y))$ is also holomorphic there. Indeed, it can be expressed as 
\begin{equation}\label{Y-X+}
Y^-(X^+(y)) = \frac{X^+(y)^2 + 2\mu_1X^+(y)}{y}.
\end{equation} 
To see this, note that $Y^-(x)$ and $Y^+(x)$ are the two roots of $y\longmapsto \frac{1}{2}(x-y)^2 + \mu_1x + \mu_2y$. Then, by Vieta's equations and since $Y^+(X^+(y)) = y$, \eqref{Y-X+} follows immediately. Since $\gamma_2(x_{max}, Y^{\pm}(x_{max}) \neq 0$, it follows from \eqref{poulet2} that $\phi_2(X^+(y))$ is holomorphic at $Y^\pm(x_{max})$, so the saddle-point method applies to the right-hand side of \eqref{eq:7.2}. Then, asymptotics of Green's functions become
\begin{equation}\label{poulet}
g(r\cos(\alpha), r\sin(\alpha))  \underset{r\to\infty \atop \alpha \to \alpha_{0}}{\sim} c^*h_{\alpha^*}(z_0)e^{-r(\cos(\alpha)x^* + \sin(\alpha)y^*)}\fc_{\alpha^* >0}\quad\quad\quad\quad
\end{equation}
$$\quad\quad\quad\quad\quad\quad\quad\quad\quad+\  e^{-r(\cos(\alpha)x(\alpha) + \sin(\alpha)y(\alpha))} \frac{1}{\sqrt{r}}\left(c_0(\alpha) + \frac{c_1(\alpha)}{r}\right)$$
for all $\alpha_0 \in [0, \epsilon]$ where $\epsilon >0$ is sufficiently small. 
It remains to show that if $\alpha^* = 0,$ then \eqref{72} holds. With $\alpha = 0$, equation \eqref{poulet} becomes \begin{equation}
g(r,0)  \underset{r\to\infty}{\sim}  \frac{e^{-rx_{max}}}{r^{3/2}}c_1(0).
\end{equation}  
By Proposition~\ref{prop:linkhg}, $g(r,0) = 2f_2(r)$. Finally, Lemma \ref{tauberian} applies and \nr{concludes}\nb{completes} the asymptotic \nb{analysis}. The non-vanishing of $\partial_\alpha[h_{\alpha}(z_0)]_{\alpha = 0^+}$ \nr{and $h_{\alpha^{*}}(z_0)$ respectively }is \nr{symmetrical}\nb{analogous to}\nr{ from} the case $\alpha \in (\alpha^*, \alpha^{**})$, see Section~\ref{sub:6.2}. \nb{The non-vanishing of $h_{\alpha^{*}}(z_0)$ if $\alpha^* > 0$ is already proved in Lemma~\ref{lem:lescstessontnonnulles}}.
\end{proof}

\subsection{Case $\alpha \to \alpha^*$ when $\alpha^* > 0$}\label{sub:7.2}

\begin{theorem}[Asymptotics with $\alpha \to \alpha^*$]\label{directions_poles}
Suppose $\alpha^* > 0$ (i.e., $\phi_2$ has a pole). Then:
\begin{itemize}
\item If $r(\alpha - \alpha^{*})^2\to 0$, then:
\begin{equation}\label{asympole}
g^{z_0}(r\cos(\alpha), r\sin(\alpha))
\underset{r\to\infty \atop \alpha\to \alpha^*}{\sim} 
\frac{1}{2}h_{\alpha^*}(z_0)e^{-r(\cos(\alpha)x^* + \sin(\alpha)y^*)}.
  \end{equation}  
\nb{where $h_{\alpha^*}(z_0)$ is given by \eqref{harm2}.}
\item If $r(\alpha - \alpha^{*})^2\to K>0$ for some constant $K$, then for $\alpha < \alpha^*$ (resp. $\alpha > \alpha^*$),
\begin{equation}
g^{z_0}(r\cos(\alpha), r\sin(\alpha))
\underset{r\to\infty \atop \alpha\to \alpha^*}{\sim} 
c_Kh_{\alpha^*}(z_0)e^{-r(\cos(\alpha)x^* + \sin(\alpha)y^*)}
  \end{equation}  
  $$\left(resp.\quad g^{z_0}(r\cos(\alpha), r\sin(\alpha))\underset{r\to\infty \atop \alpha\to \alpha^*}{\sim} 
\tilde c_Kh_{\alpha^*}(z_0)e^{-r(\cos(\alpha)x^* + \sin(\alpha)y^*)}\right).$$
where constants $c_K > 0$, $\tilde c_K > 0$ are independant of initial condition $z_0$.
\item If $r(\alpha-\alpha^{*})^2\to \infty$, then:
\begin{itemize}
\item If $\alpha<\alpha^{*}$, then \begin{equation}
g^{z_0}(r\cos(\alpha), r\sin(\alpha))
\underset{r\to\infty \atop \alpha\to \alpha^*}{\sim} 
h_{\alpha^*}(z_0)e^{-r(\cos(\alpha)x^* + \sin(\alpha)y^*)}.
  \end{equation}  
\item If $\alpha>\alpha^{*}$, then:
\begin{equation}
g^{z_0}(r\cos(\alpha), r\sin(\alpha))
\underset{r\to\infty \atop \alpha\to \alpha^*}{\sim} 
h_{\alpha^*}(z_0)e^{-r(\cos(\alpha)x(\alpha) + \sin(\alpha)y(\alpha)} \frac{1}{\sqrt{r}}\frac{C}{\alpha-\alpha^{*}}
  \end{equation} where $C$ is a positive constant independent of initial condition $z_0$.
\end{itemize}
\end{itemize}
Furthermore, $h_{\alpha^*}(z_0) > 0$.
Constants $c_K, \tilde c_K$ and $C$, are made explicit in~\cite[Section 10]{franceschi2023asymptotics}.
\end{theorem}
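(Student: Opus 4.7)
The plan is to extend the saddle-point analysis developed in Sections~\ref{sec:6} and~\ref{sub:6.1} to handle the coalescence of the pole $x^*$ of $\phi_2$ with the saddle point $x(\alpha)$ as $\alpha\to\alpha^*$. Starting from the decomposition $g^{z_0}=I_1+I_2+I_3$ of Lemma~\ref{lem:I123} together with the contour shift of Lemma~\ref{residus}, note that only $I_1$ is affected by the pole: the integrands of $I_2$ and $I_3$ remain holomorphic in a neighbourhood of their saddle points and, by the standard saddle-point method combined with the negligibility estimates of Lemma~\ref{pp}, contribute only terms which are negligible compared with the leading order of $I_1$ for $\alpha$ close to $\alpha^*$.

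The next step is to isolate the pole in $I_1$. On the steepest descent path $\Gamma_{x,\alpha}$, rewrite the integrand as $A(x)(x-x^*)^{-1}\exp(-r(\cos\alpha\,x+\sin\alpha\,Y^+(x)))$, where $A$ is holomorphic near $x^*$ and $A(x^*)$ equals, up to the standard Jacobian factor of~\eqref{all}, the coefficient of the residue appearing in Lemma~\ref{residus}. After the quadratic change of variable $F(x,\alpha)=t^2$ supplied by the parameterised Morse lemma, this reduces to a one-dimensional integral of the form $\int e^{-rt^2}\psi(t)(t-t^*(\alpha))^{-1}dt$, where the position $t^*(\alpha)$ of the pole on the real $t$-axis satisfies $t^*(\alpha)\sim c\,(\alpha-\alpha^*)$ for an explicit nonzero constant $c$, since $F(x^*,\alpha^*)=0$ and $\partial_\alpha^2 F(x^*,\alpha^*)>0$. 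To this one applies the Bleistein--van der Waerden uniform asymptotic formula for a saddle point near a simple pole (used in the analogous setting of~\cite[Section~10]{franceschi2023asymptotics}), which yields
\begin{equation*}
I_1(a,b)\sim\tfrac12\,\mathrm{erfc}\!\left(\mathrm{sgn}(\alpha-\alpha^*)\sqrt{r}\,|t^*(\alpha)|\right)\cdot h_{\alpha^*}(z_0)\,e^{-r(\cos\alpha\,x^*+\sin\alpha\,y^*)}+R(r,\alpha),
\end{equation*}
where the pre-erfc coefficient is identified with $h_{\alpha^*}(z_0)$ by combining the residue formula in Lemma~\ref{residus} with the explicit expression from Theorem~\ref{harmoniques} and the definition~\eqref{eq:c*} of $c^*$, and $R(r,\alpha)$ is the usual saddle-point contribution in which the factor $(x-x^*)^{-1}$ scales like $|\alpha-\alpha^*|^{-1}$.

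The four regimes of the statement are then read off from the values of $\mathrm{erfc}$ and the order of magnitude of $R$. When $r(\alpha-\alpha^*)^2\to 0$, $\mathrm{erfc}(0)=1$ gives the factor $\tfrac12$ appearing in~\eqref{asympole}; when $r(\alpha-\alpha^*)^2\to K>0$, the constants $c_K$ and $\tilde c_K$ arise as $\tfrac12\,\mathrm{erfc}(\mp c\sqrt{K})$; when $r(\alpha-\alpha^*)^2\to\infty$ with $\alpha<\alpha^*$, $\mathrm{erfc}(-\infty)=2$ recovers the full residue $h_{\alpha^*}(z_0)$; and when $\alpha>\alpha^*$ with $r(\alpha-\alpha^*)^2\to\infty$, $\mathrm{erfc}(+\infty)=0$ kills the residue term, leaving only $R(r,\alpha)$, whose size $r^{-1/2}/(\alpha-\alpha^*)$ yields the announced asymptotic. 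The non-vanishing of $h_{\alpha^*}(z_0)$ is obtained by the same argument as in Section~\ref{sub:6.2}, using the exponential growth of $h_{\alpha^*}$ along the ray of angle $\alpha^*$ supplied by the explicit formula~\eqref{harm2}.

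The main obstacle is the uniform pole-near-saddle expansion itself: one must justify it uniformly in $\alpha$ throughout a full neighbourhood of $\alpha^*$ and track carefully the branch of $\sqrt{F(x^*,\alpha)}$ on either side of $\alpha^*$ so that the sign in the argument of $\mathrm{erfc}$ flips correctly as $\alpha$ crosses $\alpha^*$. Once this is done, matching the prefactor with $h_{\alpha^*}(z_0)$ through the compensation-method series of Theorem~\ref{harmoniques} is routine, and the remaining steps reduce to bookkeeping with tools already developed in Sections~\ref{sec:6}--\ref{sub:6.1}.
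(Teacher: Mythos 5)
Your proposal follows essentially the same route as the paper: the paper's own proof is a one-line deferral to the pole-near-saddle uniform asymptotic expansion carried out in~\cite[Section 10]{franceschi2023asymptotics}, which is exactly the Bleistein--van der Waerden / complementary-error-function machinery you invoke, and the four regimes are read off from $\mathrm{erfc}$ in the way you describe. One minor technical point: after the Morse change of variable $F(x(it,\alpha),\alpha)=-t^2$, the pole $x=x^*$ maps not to the real $t$-axis but to a point $t^*(\alpha)=\pm i\sqrt{F(x^*,\alpha)}$ lying on the imaginary axis (note that $F(x^*,\alpha)\geq 0$ with a double zero at $\alpha=\alpha^*$), so the correct statement is $\sqrt{F(x^*,\alpha)}\sim c\,|\alpha-\alpha^*|$ with the side of the contour tracked by the sign of $\alpha-\alpha^*$ — which is the branch bookkeeping you flag as the main obstacle.
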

The proof is analogous to~\cite[Section 10]{franceschi2023asymptotics}\nb{, which compares the asymptotic contribution of the pole term and the saddle-point term in the expressions of Lemma~\ref{residus} for $g^{z_0} = I_1 + I_2 + I_3$.} The non-vanishing of $h_{\alpha^{*}}(z_0)$ \nb{was already proved in Lemma~\ref{lem:lescstessontnonnulles}.}\nr{is symmetrical from the case $\alpha \in (\alpha^*, \alpha^{**})$, see Section~\ref{sub:6.2}.}

\nr{\begin{proof}[End of the proof of Proposition~\ref{pole}]
    The remaining part of the proof is equivalent to showing that $h_{\alpha^*}(z_0) \neq 0$, which is established by the previous theorem. 
\end{proof}}\ngray{This is more natural to prove Proposition 3.4 earlier, after the proof of Theorem 1}

\subsection{Last particular case: $\gamma_2(x_{max}, Y^\pm(x_{max})) = 0$}\label{particulier}\label{sub:7.3}
\begin{theorem}\label{thm:7}
Suppose $\gamma_2(x_{max}, Y^\pm(x_{max})) = 0$ (so $\alpha^* = 0$). Then,
\begin{equation}\label{le_dernier}
g^{z_0}(r\cos(\alpha), r\sin(\alpha))
\underset{r\to\infty \atop \alpha\to 0}{\sim} 
e^{-r(\cos(\alpha)x(\alpha) + \sin(\alpha)y(\alpha)} \frac{h_{0}(z_0)}{\sqrt{r}}.
  \end{equation}
where $h_{0}(z_0)$ is given by (\ref{harm3}). Furthermore, $h_{0}(z_0) \neq 0$.
\end{theorem}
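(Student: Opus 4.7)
My plan is to adapt the strategy of Section~\ref{sec:7}, with a careful treatment of the coalescence between the potential pole $x^{*}=x_{max}$ of $\phi_{2}$ and the branching point $x_{max}$ of $Y^{\pm}$. Starting from the decomposition $g^{z_{0}}(a,b)=I_{1}+I_{2}+I_{3}$ of Lemma~\ref{lem:I123}, I would shift the integration contours onto the steepest descent paths $\Gamma_{x,\alpha},\Gamma_{y,\alpha}$ as in Lemma~\ref{residus}. The formal residue contribution drops out for free: $\alpha^{*}=0$ excludes the indicator $\mathbf{1}_{\alpha<\alpha^{*}}$, and the factor $\gamma_{2}(x^{*},y^{*})$ would vanish anyway by hypothesis. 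Lemma~\ref{pp} then shows that the integrals over the vertical portions $S^{\pm}_{x,\alpha},S^{\pm}_{y,\alpha}$ are exponentially subdominant, reducing the analysis to the three saddle-point integrals along $\Gamma_{x,\alpha}$ and $\Gamma_{y,\alpha}$.

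The technical difficulty is that the continuation formula~\eqref{cont} combined with $\gamma_{2}(x_{max},Y^{\pm}(x_{max}))=0$ forces $\phi_{2}(x)$ to blow up like $(x_{max}-x)^{-1/2}$ at the saddle point. I would regularise exactly as in the proof of Theorem~\ref{alpha=0}, switching to the $y$-variable via $\Gamma_{x,\alpha}=\overleftarrow{X^{+}(\Gamma_{y,\alpha})}$ and rewriting the $I_{1}$-integral as
\[
I_{1}(a,b)=\frac{1}{2\pi i}\int_{\Gamma_{y,\alpha}}\frac{\phi_{2}(X^{+}(y))\,\gamma_{2}(X^{+}(y),y)}{\gamma'_{x}(X^{+}(y),y)}\,e^{-aX^{+}(y)-by}\,dy.
\]
Using~\eqref{Y-X+} and a direct computation one checks that $X^{+}$ has a critical point at $y_{0}:=Y^{+}(x_{max})$ (because $(Y^{+})'(x_{max})=\infty$), so $\phi_{2}(X^{+}(y))$ develops a simple pole at $y_{0}$. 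Simultaneously $\gamma_{2}(X^{+}(y),y)$ vanishes simply at $y_{0}$ (since $\gamma_{2}(x_{max},y_{0})=0$ and $\partial_{y}\gamma_{2}(X^{+}(y),y)|_{y_{0}}=1$), while $\gamma'_{x}(X^{+}(y),y)$ equals the nonzero value $\mu_{1}+\mu_{2}=1$. The pole and the zero cancel, the integrand is regular across the saddle, and the classical saddle-point lemma yields a $1/\sqrt{r}$-asymptotic with an explicit prefactor depending on $\phi_{1}(y_{0})$ and $e^{z_{0}\cdot(x_{max},y_{0})}$.

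The main obstacle is then to identify the sum of the three saddle-point contributions with $h_{0}(z_{0})$ as given by~\eqref{harm3}. For this I would substitute the compensation expansion of $\phi_{1}$ from Theorem~\ref{harmoniques}. In the critical case $s=s_{max}$, the construction of Lemma~\ref{anbn} gives $(a_{0},b_{0})=(a_{1},b_{1})=(x_{max},y_{0})$, because the automorphism $\zeta$ degenerates to the identity at the branch point: the $m=0$ and $m=1$ exponentials of the generic formula~\eqref{harm1} therefore merge into one term whose combined coefficient, computed as the limit of $1+\kappa_{1}(\alpha)$ as $\alpha\to 0$ (an indeterminate $0/0$), turns out to equal $2$. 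The two residual series for $m\leq -1$ and $m\geq 2$ come respectively from the downward orbit under $\eta\zeta$ of $\phi_{1}(y_{0})$ and from a renormalised upward orbit: the modified coefficients $\tilde\kappa_{m}$ in~\eqref{harm3} arise from dividing the vanishing factor $\gamma_{2}(a_{0},b_{0})$ out of $\kappa_{m}$, which amounts to replacing $\gamma_{2}(a_{0},b_{0})$ by $\gamma_{1}(a_{1},b_{1})/(\gamma_{1}/\gamma_{2})(a_{2},b_{2})$ in the normalisation. Finally, non-nullity of $h_{0}(z_{0})$ follows as in Section~\ref{sub:6.2}: the dominant term $2\,e^{z_{0}\cdot(a_{0},b_{0})}$ makes $h_{0}(z_{0})$ positive for $z_{0}$ large along the direction $\alpha=0$, and the mean-value identity of Lemma~\ref{moyenne} propagates positivity throughout the quadrant.
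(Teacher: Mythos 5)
Your proposal is correct and follows essentially the same route as the paper's proof. The key technical insight -- that after switching to the $y$-variable via $\Gamma_{x,\alpha}=\underrightarrow{\overleftarrow{X^{+}(\Gamma_{y,\alpha})}}$, the simple pole of $\phi_2(X^+(y))$ at $y_0=Y^\pm(x_{max})$ (coming from $\gamma_2(X^+(y),Y^-(X^+(y)))$ vanishing there with derivative $-1$) is exactly cancelled by the simple zero of $\gamma_2(X^+(y),y)$ (with derivative $+1$, while $\gamma'_x(x_{max},y_0)=\mu_1+\mu_2=1\neq0$) -- is precisely what the paper establishes via \eqref{calcul1}, and your computation that $1+\kappa_1(\alpha)\to 2$ because $\gamma_2(s)/\gamma_2(\zeta s)\to -1$ at $s=s_{max}$ correctly unpacks how formula \eqref{harm3} arises from \eqref{harm1}. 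Two small remarks: the paper reads off $h_0$ directly from the $\phi_2$-expansion \eqref{phi2decroit} rather than re-substituting the $\phi_1$-expansion (equivalent via \eqref{cont}, just more direct); and for non-nullity the paper invokes the coordinate-wise propagation lemma of Section~\ref{sub:6.2} rather than Lemma~\ref{moyenne} -- Lemma~\ref{moyenne} alone does not propagate strict positivity from one point to the whole quadrant, so you should route the final step through that lemma as the paper does.
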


\begin{proof}
The proof follows the same approach as that of Theorem \ref{alpha=0} but here $c_0(\alpha) \longrightarrow h_0(z_0) \neq 0$. Thus, we only consider the first term in \eqref{ints_col} with the representation~\eqref{eq:7.2} for $I_1(a,b)$. First, note from \eqref{Y-} that $\frac{d}{dy}\left[X^+(y)\right]_{y=Y^{\pm}(x_{max})} = 0$. Then, by \eqref{Y-X+}:
\begin{equation}\label{calcul1}
    \frac{d}{dy}\left[\gamma_2(X^+(y), Y^{-}(X^+(y)))\right]_{y=Y^{\pm}(x_{max})} = -\left(\frac{(\mu_2/2)^2 + 2\mu_1\mu_2^2/2}{(\mu_2^2/2 - \mu_2)^2}\right) = -1.
\end{equation}
Hence, $\gamma_2(X^+(y), Y^{-}(X^+(y))) = (y - Y^{\pm}(x_{max}))(-1 + o_{y\to Y^{\pm}(x_{max})}(1))$. Furthermore, $\gamma_2(X^+(y), y) = (y - Y^{\pm}(x_{max}))(1 + o_{y\to Y^{\pm}(x_{max})}(1))$ by similar calculations.

Using the same arguments as in the proof of Theorem~\ref{alpha=0}, the function \begin{equation}\label{eq:rediteeeee}
\frac{ \gamma_2(X^+(y), y) \phi_2 (X^+(y))}{ \partial_x\gamma(X^+(y), y) }
\end{equation}is holomorphic in a neighbourhood of $Y^{\pm}(x_{max}),$ expect possibly in $Y^{\pm}(x_{max})$ where $\phi_2(X^+(y))$ has a \nb{simple} pole\nr{of the first order} by \eqref{poulet2} and \eqref{calcul1}. Since $\gamma_2(X^+(y), y)$ has a zero of the same order at this point, then \nr{$\frac{ \gamma_2(X^+(y), y) \phi_2 (X^+(y))}{ \partial_x\gamma(X^+(y), y) }$}\nb{the quantity \eqref{eq:rediteeeee}} turns out to be holomorphic at $Y^{\pm}(x_{max})$ as well. Moreover, by \eqref{phi2decroit}, the following asymptotic expansion holds as $y \to Y^\pm(x_{max})$
\begin{equation}\label{nopb}
\frac{ \gamma_2(X^+(y), y) \phi_2 (X^+(y))}{ \partial_x\gamma(X^+(y), y) } =
 (-1+o_{y\to Y^{\pm}(x_{max})}(1))\Big(-e^{z_0\cdot (X^+(y), Y^-(X^+(y)))}
\end{equation}
$$+\ \ \frac{\gamma_1(\psi_{1}(X^+(y), y))}{\frac{\gamma_1}{\gamma_2}(\psi_{2}(X^+(y), y))}\sum_{n=1}^{+\infty} \left[\prod_{k=2}^{n}  \frac{\frac{\gamma_1}{\gamma_2}(\psi_{2k-1}(X^+(y), y)}{\frac{\gamma_1}{\gamma_2}(\psi_{2k}(X^+(y), y))}\right]\left[ \frac{e^{z_0\cdot \psi_{2n}(X^+(y), y))}}{\gamma_2(\psi_{2n}(X^+(y), y))} - \frac{e^{z_0\cdot \psi_{2n+1}(X^+(y), y)}}{\gamma_2(\psi_{2n +1}(X^+(y), y))}\right]\Big).$$
This implies \eqref{harm3}. 
The proof of the non-vanishing of $h_0(z_0)$ is \nb{analogous to Lemma~\ref{lem:lescstessontnonnulles}.}\nr{symmetrical from the case $\alpha \in (\alpha^*, \alpha^{**})$, see Section~\ref{sub:6.2}.}
\end{proof}

\color{black}

\subsection{Proof of Theorem~\ref{thm:main}}\label{sec:toutepetite}
This is a direct consequence of Theorem~\ref{thm1}, \ref{alpha=0}, \ref{directions_poles}, and \ref{thm:7}.

\section{Harmonic functions and Martin boundary}\label{harmonic}\label{sec:9}

In this Section, we prove Theorem~\ref{thm:Martin}. In particular, we show in Section~\ref{sec:8.1} that the Martin boundary is homeomorphic to $[\alpha^*, \alpha^{**}]$ and in Section~\ref{8.2} that the Martin boundary is minimal.

\subsection{Context of Martin the boundary}\label{sec:8.1}
In this section, we consider the construction of the Martin boundary as presented in~\cite[Section 7.1]{Pinsky_1995} for elliptic processes and we adapt this approach to reflected degenerate processes. This method allows us to consistently link the \nr{Martin}harmonic functions $(h_\alpha)_{\alpha\in[\alpha^*,\alpha^{**}]}$ found in Theorem~\ref{thm:main} and the Martin boundary. Note that another general construction of the Martin compactification is presented in \cite{kunitaWatanabe1965}.  

\begin{defi}[Martin kernel]
For $z_0, z_1 \in \R_+^2$, we define the Martin kernel
\begin{equation}
k(z_0,z_1) =\left\{
    \begin{array}{ll}
        \frac{g^{z_0}(z_1)}{g^{(0,0)}(z_1)}    & \textnormal{if} \quad z_1 \neq (0,0)\\
      0 & \textnormal{if} \quad z_1 = (0,0) \quad \textnormal{or} \quad z_0 = z_1
    \end{array}
\right.
\end{equation}
and the Martin metric
\begin{equation}
\rho(z_1, z_2) = \int_{\R_+^2}\frac{|k(x,z_1) - k(x,z_2)|}{1 + |k(x,z_1) - k(x,z_2)|}e^{-|x|^2}dx. 
\end{equation}
\end{defi}
By usual considerations \cite{Pinsky_1995}, $\rho$ is a metric equivalent to the Euclidean one on $\R_+^2$.  
A sequence $(y_n)_{n\geq0}$ of $\R_+^2$ is \nb{called} a Martin sequence if $(k(\cdot,y_n))_{n\geq0}$ converges pointwise. Two Martin sequences are said to be equivalent if their limit functions are equal. We then define $M$ as the quotient of the set of all Martin sequences by this equivalence relation. Each $\xi\in M$ is then naturally associated with function denoted by $k(\cdot, \xi)$. \nr{Note that}\nb{The metric} $\rho$ extends naturally to $M$ with the same formula, so that the map
\begin{equation}\label{Martin_inj}
    \iota\colon\left\{
    \begin{aligned}
        \R_+^2 &\longrightarrow (M,\rho)\\
        z &\longmapsto k(\cdot,z)
    \end{aligned}
    \right.
\end{equation} 
is injective and continuous.  We define the Martin Boundary $\Gamma$ as $\Gamma = M\backslash \iota(\R_+^2)$.

\begin{lemma}\label{topoMartin}
Let $(h_{\alpha})_{\alpha \in [\alpha^*, \alpha^{**}]}$ be defined in Theorem~\ref{thm:main}. Then, the map $$\Phi : \alpha \in [\alpha^*, \alpha^{**}] \longmapsto h_\alpha(\cdot)/h_\alpha(0) \in \Gamma$$ is a homeomorphism.
\end{lemma}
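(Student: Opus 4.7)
The plan is to show that $\Phi$ is a continuous bijection from the compact interval $[\alpha^*,\alpha^{**}]$ onto the Hausdorff metric space $(\Gamma,\rho)$; this automatically yields the homeomorphism. First, to see that $\Phi$ is well-defined and takes values in $\Gamma$, fix $\alpha_0 \in [\alpha^*,\alpha^{**}]$ and consider the sequence $y_n = (r_n\cos\alpha_0, r_n\sin\alpha_0)$ with $r_n\to\infty$. Theorem~\ref{thm1} (together with its boundary companions Theorems~\ref{alpha=0}, \ref{directions_poles}, \ref{thm:7}) and the non-nullity statements from Section~\ref{sub:6.2} yield
\[
k(z_0, y_n) = \frac{g^{z_0}(y_n)}{g^{(0,0)}(y_n)} \xrightarrow[n\to\infty]{} \frac{h_{\alpha_0}(z_0)}{h_{\alpha_0}(0)},
\]
so $\Phi(\alpha_0)$ is the pointwise limit of a Martin sequence escaping every compact set, hence an element of $\Gamma$.

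Continuity of $\Phi$ follows from dominated convergence: the integrand defining $\rho(\Phi(\alpha),\Phi(\alpha'))$ is uniformly bounded by $1$ and, by the pointwise continuity of $\alpha \mapsto h_\alpha(z)$ (visible from the explicit series in Theorem~\ref{thm:main}), it tends to zero pointwise as $\alpha' \to \alpha$. For injectivity, if $\alpha\neq\alpha'$ then $(x(\alpha),y(\alpha))\neq(x(\alpha'),y(\alpha'))$; since this point uniquely maximises the linear form $(x,y)\mapsto x\cos\alpha + y\sin\alpha$ over $\mathcal{P}$, every exponential term $e^{(a_n,b_n)\cdot z_0}$ in the series~\eqref{harm1} defining $h_{\alpha'}$ has exponent strictly smaller than the leading exponent of $h_\alpha$ in the direction $\alpha$. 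Consequently
\[
\frac{h_{\alpha'}(r\cos\alpha, r\sin\alpha)}{h_\alpha(r\cos\alpha, r\sin\alpha)} \xrightarrow[r\to\infty]{} 0,
\]
so $h_\alpha$ and $h_{\alpha'}$ cannot be positively proportional and $\Phi(\alpha)\neq\Phi(\alpha')$.

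For surjectivity, let $\xi\in\Gamma$ and choose any Martin sequence $(y_n)$ representing it. Since $\xi\in\Gamma$ we have $|y_n|\to\infty$; writing $y_n = (r_n\cos\alpha_n, r_n\sin\alpha_n)$ with $\alpha_n\in[0,\pi/2]$, compactness extracts $\alpha_{n_k}\to\alpha_\infty\in[0,\pi/2]$. Applying Theorems~\ref{thm1}, \ref{alpha=0}, \ref{directions_poles}, \ref{thm:7} to this subsequence, and distinguishing whether $\alpha_\infty$ lies strictly inside $(\alpha^*,\alpha^{**})$, equals $\alpha^*$ or $\alpha^{**}$, or falls outside $[\alpha^*,\alpha^{**}]$, one obtains pointwise
\[
k(\cdot, y_{n_k}) \longrightarrow \frac{h_{\tilde\alpha}(\cdot)}{h_{\tilde\alpha}(0)} \quad \text{with} \quad \tilde\alpha := \max\bigl(\alpha^*,\min(\alpha^{**}, \alpha_\infty)\bigr)\in[\alpha^*,\alpha^{**}].
\]
Therefore $\xi = \Phi(\tilde\alpha)$, and $\Phi$ is surjective. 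A continuous bijection from a compact space onto a Hausdorff space is a homeomorphism, which closes the argument.

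The main obstacle will be the delicate asymptotic analysis at the boundary angles $\alpha^*$ and $\alpha^{**}$: when $\alpha_n\to\alpha^*$, Theorem~\ref{directions_poles} lists three asymptotic regimes indexed by the limit of $r_n(\alpha_n-\alpha^*)^2$ (tending to $0$, to a positive constant, or to $+\infty$), and one must verify that in all three cases---plus in the degenerate situations of Theorems~\ref{alpha=0} and \ref{thm:7}---the ratio $g^{z_0}(y_n)/g^{(0,0)}(y_n)$ collapses to the single function $h_{\alpha^*}/h_{\alpha^*}(0)$. This independence from the rate and from the extraction is what guarantees that the surjective map $\Phi$ is truly single-valued on the boundary.
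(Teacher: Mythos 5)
Your overall strategy — continuous bijection from a compact interval to the metric space $(\Gamma,\rho)$, with the range identification via the Green's function asymptotics — matches the paper's. The injectivity argument (compare the $h_\alpha$'s by letting $r\to\infty$ along direction $\alpha$, using that $(x(\alpha),y(\alpha))$ uniquely maximizes the linear form over $\mathcal{P}$) is the same as the paper's. The surjectivity reasoning via subsequential angular limits and $\tilde\alpha=\max(\alpha^*,\min(\alpha^{**},\alpha_\infty))$ is also fine. And your final appeal to compact-to-Hausdorff is equivalent to the paper's closed-map conclusion.

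However, your continuity argument has a genuine gap. You claim that $\alpha\mapsto h_\alpha(z)$ is pointwise continuous on $[\alpha^*,\alpha^{**}]$, ``visible from the explicit series in Theorem~\ref{thm:main}.'' This is false at the endpoints: from the remark immediately preceding Lemma~\ref{topoMartin} in the paper (which follows directly from formulae \eqref{harm1}--\eqref{harm3}), if $\alpha^*>0$ then $h_\alpha(z)\to+\infty$ as $\alpha\to\alpha^{*+}$, while if $\alpha^*=0$ and $\gamma_2(x_{\max},Y^{\pm}(x_{\max}))\neq 0$ then $h_\alpha(z)\to 0$. In both cases $\alpha\mapsto h_\alpha(z)$ is discontinuous at the endpoint. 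What actually is continuous is the \emph{normalized} quotient $\Phi(\alpha)(z)=h_\alpha(z)/h_\alpha(0)$, and establishing this requires showing that the divergence (or vanishing) cancels between numerator and denominator. This cancellation is precisely what the paper proves case by case: for $\alpha^*=0$, $\gamma_2\neq 0$ it writes $\Phi(\alpha)(z)=\frac{h_\alpha(z)}{\alpha}\cdot\frac{\alpha}{h_\alpha(0)}$ and invokes the derivative $\partial_\alpha[h_\alpha]_{\alpha=0}$; for $\alpha^*>0$ it factors out the vanishing quantity $\gamma_2(\zeta s^*)^{-1}$ from both series and passes to the limit. Your dominated-convergence step is harmless once this pointwise continuity of the \emph{quotient} is secured, but it cannot be deduced from the (false) continuity of $h_\alpha$ alone; you would need to insert the endpoint case analysis.
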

Before proving this lemma, \nr{let us recall some behaviours of}\nb{we recall some properties of the family} $(h_\alpha)_{\alpha\in[\alpha^*, \alpha^{**}]}$.
\begin{rem} Note that for $z \in \R_+^2$
     \begin{itemize}
         \item If $\alpha^* > 0,$ then by \eqref{harm1}, $$h_\alpha(z) \underset{\alpha \to \alpha^* \atop\alpha > \alpha^*}{\longrightarrow} +\infty.$$
         \item If $\alpha^* = 0$ and $\gamma_2(x_{max}, Y^\pm(x_{max}) = 0$, then by \eqref{harm3}, $$h_\alpha(z) \underset{\alpha \to 0\atop \alpha > 0}{\longrightarrow} h_0(z) > 0.$$
         \item If $\alpha^* = 0$ and $\gamma_2(x_{max}, Y^\pm(x_{max}) \neq 0$, then by \eqref{harm2}, $$h_\alpha(z) \underset{\alpha \to 0\atop \alpha > 0}{\longrightarrow} 0.$$
     \end{itemize}
\end{rem}

\begin{proof}[Proof of Lemma~\ref{topoMartin}]
\nr{By Theorem~\ref{thm:main}}\nb{By Theorems~\ref{thm1}, \ref{thm:main}, \ref{alpha=0}, \ref{directions_poles}, and \ref{thm:7},} 
$\Phi$ is surjective. 
To prove the continuity of $\Phi$, note that a sequence $(\xi_n)_{n\geq0}$ converges to some $\xi \in M$ if $k(\cdot, \xi_n)$ converges pointwise toward $k(\cdot, \xi)$ almost everywhere. Therefore, the proof of the continuity of $\Phi$ is reduced to showing that, for any $z \in \R_+^2$, the map $\alpha \longmapsto \Phi(\alpha)(z)$ is continuous. Let $z \in \R_+^2$.
\begin{itemize}
    \item  By (\ref{harm1}), the map $\alpha\mapsto\Phi(\alpha)(z)$ is continuous on $(\alpha^*, \alpha^{**})$.
    \item If $\alpha^* = 0$ and $\gamma_2(x_{max}, Y^{\pm}(x_{max})) \neq 0$, then we have: $$\Phi(\alpha)(z) = \frac{h_\alpha(z)}{h_\alpha(0)} = \frac{h_\alpha(z)}{\alpha}\frac{\alpha}{h_\alpha(0)} \underset{\alpha\to 0}{\longrightarrow} \frac{[\partial_\alpha h_\alpha(z)]_{\alpha=0}}{[\partial_\alpha h_\alpha(0)]_{\alpha=0}} = \Phi(0)(z)$$ 
so $\alpha\mapsto\Phi(\alpha)(z)$ is continuous at $\alpha^* = 0$.

\item  If $\alpha^* > 0$, then \nr{$\frac{h_{\alpha(s)}(z)}{h_{\alpha(s)}(0)}$}\nb{$\frac{h_{\alpha}(z)}{h_{\alpha}(0)}$} can be written as \nr{$$\frac{\sum_{m=-\infty}^{0} f_m(s)e^{z_0\cdot (a_{m}(s), b_{m}(s))} + \frac{1}{\gamma_2(\zeta s)}\sum_{m=1}^{+\infty} f_m(s)e^{z_0\cdot (a_{m}(s), b_{m}(s))}}{\sum_{m=-\infty}^{0} f_m(s) + \frac{1}{\gamma_2(\zeta s)}\sum_{m=1}^{+\infty} f_m(s)}$$}
\nb{$$\frac{\sum_{m=-\infty}^{0} \kappa_m(\alpha)e^{z\cdot (a_{m}(\mathfrak{s}(\alpha)), b_{m}(\mathfrak{s}(\alpha)))} + \frac{1}{\gamma_2(\zeta \mathfrak{s}(\alpha))}\sum_{m=1}^{+\infty} \kappa_m(\alpha)e^{z\cdot (a_{m}(\mathfrak{s}(\alpha)), b_{m}(\mathfrak{s}(\alpha)))}}{\sum_{m=-\infty}^{0}\kappa_m(\alpha) + \frac{1}{\gamma_2(\zeta \mathfrak{s}(\alpha))}\sum_{m=1}^{+\infty} \kappa_m(\alpha)}$$}
\nb{where $(a_m(\mathfrak{s}(\alpha)), b_m(\mathfrak{s}(\alpha))),$ are defined by \eqref{eq:an} and \eqref{eq:bn}, with $(a_0(\mathfrak{s}(\alpha)), b_0(\mathfrak{s}(\alpha))) = (x(\alpha), y(\alpha))$, and $\kappa_m(\alpha)$ is given by \eqref{eq:kappam}.}\nr{with continuous functions $f_m.$}
Since \nb{$\gamma_2(\zeta(\mathfrak{s}(\alpha))) \underset{\alpha\to \alpha^*}{\longrightarrow}0$}\nr{$\gamma_2(\zeta(s)) \underset{s\to s^*}{\longrightarrow}0$} (see Notations~\ref{not:1} and Proposition~\ref{pole}), the expected continuity in $\alpha^*$ follows from standard continuity theorems on series\nr{(note that homeomorphism (\ref{s_alpha}) enables us to use the variable $s$ or $\alpha$ equivalently)}.
\item The remaining case $\alpha^* = 0$ and $\gamma_2(x_{max}, Y^{\pm}(x_{max})) = 0$ is analogous.
\item The proof of the continuity of $\Phi$ at $\alpha^{**}$ is symmetric.
\end{itemize} 
Next, let us show that $\Phi$ is injective. By the explicit expressions in Theorem \ref{thm:main}, the following asymptotics hold as $r\to +\infty$. For $\alpha^* < \alpha < \alpha^{**}$ and $0 \leq \theta \leq \pi/2$, we have:
\begin{equation}\label{eqtheta}
h_\alpha(r\cos(\theta), r\sin(\theta)) \underset{r\to\infty}{\sim} 
e^{r(x(\alpha)\cos(\theta) + y(\alpha)\sin(\theta))}.
\end{equation}
If $\alpha^* = 0$, then for any $0 < \theta\leq \pi/2$,
\begin{equation}\label{eqtheta2}
h_0(r\cos(\theta), r\sin(\theta)) \underset{r\to\infty}{\sim} r\sin(\theta)e^{r(x(0)\cos(\theta) + y(0)\sin(\theta))} .
\end{equation}
If $\alpha^* > 0$ and $0 \leq \theta \leq \pi/2$, then 
\begin{equation}\label{eqtheta3}
h_{\alpha^*}(r\cos(\theta), r\sin(\theta)) \underset{r\to\infty}{\sim} e^{r(x(\alpha^*)\cos(\theta) + y(\alpha^*)\sin(\theta))}.
\end{equation}
The corresponding symmetric asymptotic behavior holds for $h_{\alpha^{**}}$. If $\alpha, \alpha' \in [\alpha^*, \alpha^{**}]$ are distinct, then by \eqref{xalpha} and the preceding formulae, $$\frac{h_\alpha(r\cos(\alpha),r\sin(\alpha))}{h_{\alpha'}(r\cos(\alpha),r\sin(\alpha))} \underset{r\to\infty}{\longrightarrow} +\infty.$$ Hence, $h_\alpha \neq Ch_{\alpha'}$ for any constant $C$, and $\Phi$ is injective.

Since $\Phi$ is continuous, and since $[\alpha^*, \alpha^{**}]$ is compact, $\Phi(F)$ is closed in $\Gamma$ for any closed subset $F$ of $[\alpha^*, \alpha^{**}]$. Therefore, $\Phi$ is a homeomorphism.
\end{proof}

\begin{cor}
\label{construction}
The following properties hold:
\begin{enumerate}
\item[(i)] If $\eta, \xi \in M$ satisfy $k(\cdot,\xi) = k(\cdot, \eta)$, then $\eta = \xi$.
\item[(ii)] The metric space $(M, \rho)$ is compact.
\item[(iii)] $\iota(\R_+^2)$ is dense in $M$ with respect to $\rho$.
\item[(iv)] If a sequence $(y_n)_{n\geq0}\subset\R_+^2$ converges to $\eta \in \Gamma$ with respect to $\rho$, then $k(\cdot, y_n)$ converges pointwise to $k(\cdot, \eta)$.
\end{enumerate}
\end{cor}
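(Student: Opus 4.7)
The plan is to deduce all four items from the definition of $M$ combined with the pointwise asymptotics of Green's kernels established in Sections \ref{sec:7} and \ref{sec:8}. Part (i) is immediate: the equivalence relation defining $M$ identifies Martin sequences precisely when their pointwise limits coincide, so $k(\cdot, \xi) = k(\cdot, \eta)$ forces $\xi = \eta$. Part (iii) follows directly from the construction of $\Gamma$: for every $\eta \in \Gamma$ there is, by definition, a Martin sequence $(y_n) \subset \R_+^2$ such that $k(\cdot, y_n) \to k(\cdot, \eta)$ pointwise, and dominated convergence in the integral defining $\rho$ (using the integrable weight $e^{-|x|^2}$ and the bound $\tfrac{|\cdot|}{1+|\cdot|} \leq 1$) upgrades this to $\rho(\iota(y_n), \eta) \to 0$.

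The key step is part (ii). Given a sequence $(\xi_n) \subset M$, I would first use (iii) to pick points $y_n \in \R_+^2$ with $\rho(\iota(y_n), \xi_n) \leq 1/n$, reducing the task to extracting a $\rho$-convergent subsequence from $(\iota(y_n))$. If $(y_n)$ stays in a compact subset of $\R_+^2$, pass to a convergent subsequence $y_n \to y$; joint continuity of $z \mapsto g^z(\cdot)$ yields pointwise convergence $k(\cdot, y_n) \to k(\cdot, y)$, and dominated convergence transfers this to $\rho$-convergence. Otherwise, extract a subsequence along which $|y_n| \to \infty$ and $y_n/|y_n| \to (\cos \alpha_0, \sin \alpha_0)$ for some $\alpha_0 \in [0, \pi/2]$. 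Dividing the asymptotic formulae of Theorems \ref{thm1}, \ref{alpha=0}, \ref{directions_poles} and \ref{thm:7} applied to $g^z(y_n)$ and $g^{(0,0)}(y_n)$ causes the exponential and polynomial prefactors to cancel, so $k(z, y_n)$ converges to $h_{\bar\alpha}(z)/h_{\bar\alpha}((0,0))$, where $\bar\alpha = \max\bigl(\alpha^*, \min(\alpha_0, \alpha^{**})\bigr)$. This limit corresponds, via $\Phi$, to the element $\Phi(\bar\alpha) \in \Gamma \subset M$, and again dominated convergence yields $\rho$-convergence.

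Part (iv) follows from (i) and (ii) by a contradiction argument: if $\iota(y_n) \to \eta \in \Gamma$ in $\rho$ but $k(z_0, y_n) \not\to k(z_0, \eta)$ for some $z_0$, extract a subsequence realising a gap $|k(z_0, y_n) - k(z_0, \eta)| > \varepsilon$; by (ii), a further subsequence satisfies $\iota(y_n) \to \xi$ in $\rho$ with pointwise convergence $k(\cdot, y_n) \to k(\cdot, \xi)$; uniqueness of limits in the metric $\rho$ forces $\xi = \eta$, hence $k(z_0, y_n) \to k(z_0, \eta)$, a contradiction. The main obstacle is the asymptotic step in (ii) when $|y_n| \to \infty$ and $\alpha_0 \in \{\alpha^*, \alpha^{**}\}$: there the dominant term of $g^z(y_n)$ changes regime, and one must carefully invoke each sub-case of Theorems \ref{directions_poles} and \ref{thm:7} to show that the quotient $g^z(y_n)/g^{(0,0)}(y_n)$ still admits a common pointwise limit independent of the sub-regime along which $(y_n)$ approaches the critical direction.
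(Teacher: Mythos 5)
Your proposal is essentially correct and takes the same route as the paper: items (i) and (iii) are read off the construction of $M$ and $\rho$, item (ii) is the substantive point handled by splitting into a bounded case and an escaping case (with the asymptotics from Theorems~\ref{thm1}, \ref{alpha=0}, \ref{directions_poles}, \ref{thm:7} supplying the pointwise limit $h_{\bar\alpha}/h_{\bar\alpha}(0)$), and (iv) is then inherited. Your preliminary reduction via (iii) from sequences in $M$ to sequences in $\iota(\R_+^2)$ is a harmless rearrangement of the paper's three-case split (bounded subsequence / escape to infinity / infinitely many points in $\Gamma$, the last of which is absorbed by density in your version).

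One small caution in your proof of (iv): the \emph{statement} of (ii) only delivers a $\rho$-convergent sub-subsequence, and $\rho$-convergence is a priori weaker than pointwise convergence, so invoking (ii) alone to conclude ``$\iota(y_{n_k})\to\xi$ in $\rho$ \emph{with} pointwise convergence $k(\cdot,y_{n_k})\to k(\cdot,\xi)$'' is circular on its face — that upgrade is exactly what (iv) asserts. What you actually need, and what is true, is that the \emph{proof} of (ii) identifies the $\rho$-limit either as $\iota(y)$ coming from a Euclidean limit $y_{n_k}\to y$, or as $\Phi(\bar\alpha)$ coming from the Green's-function asymptotics; in both cases the pointwise convergence $k(\cdot,y_{n_k})\to k(\cdot,\xi)$ is established en route, not deduced afterward. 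Rephrasing step~3 as ``by the argument proving (ii)'' (or recording the pointwise convergence as an explicit by-product of (ii)) closes the gap; the remainder of the contradiction argument is then sound.
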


\begin{proof}
Properties (i), (iii), and (iv) \nr{are direct}\nb{follow directly} from our construction. We now prove (ii). 
Let $(y_n)_{n\geq0}$ be a sequence in $M$. Then:
\begin{itemize}
\item Either $(y_n)_{n\geq0}$ has infinitely many points in $\Gamma$, \nr{and then has}\nb{in which case it has} a convergent subsequence since $\Gamma$ is compact (see Lemma~\ref{topoMartin}). 
\item \nr{Either}\nb{Or} $(y_n)_{n\geq0}$ has a bounded subsequence, \nr{and then we conclude the same since}\nb{in which case the conclusion follows as} $\rho|_{\R_+^2\times \R_+^2}$ is equivalent to the Euclidean metric.
\item \nr{Either}\nb{Or} $(y_n)_{n\geq0}$ has a subsequence that tends to infinity. Since $[0,\pi/2]$ is compact, $(y_n)_{n\geq0}$ has a subsequence \nr{converging}\nb{that tends} to infinity in some direction $\alpha \in [0, \pi/2]$. By Theorems \ref{thm1}, \ref{alpha=0}, \ref{directions_poles}, and \ref{thm:7} this subsequence converges \nb{(with respect to the metric $\rho$)} to $\fc_{\alpha <\alpha^{*}}h_{\alpha^*} + \fc_{\alpha^* \leq \alpha \leq \alpha^{**}}h_{\alpha} + \fc_{\alpha < \alpha^{**}}h_{\alpha^{**}}$\nr{ for the distance $\rho$}.
\end{itemize} 
\end{proof}
\begin{rem}\label{rem:*}
    By Corollary \ref{construction}, $M$ is the Martin compactification in the sense of \cite{kunitaWatanabe1965,Pinsky_1995}, and the Martin Boundary $\Gamma$ is homeomorphic to $[\alpha^*, \alpha^{**}]$. 
\end{rem}
In particular, by \cite[Theorem 4]{kunitaWatanabe1965} the following representation theorem holds.

\begin{theorem}[Integral representation]\label{repres}
If $h$ is a non-negative harmonic function, then there exists a Radon measure $\mu_h$ on $[\alpha^*, \alpha^{**}]$ satisfying
\begin{equation}\label{inte}
\forall z \in \R_+^2,\quad h(z) = \int_{[\alpha^*, \alpha^{**}]} h_\alpha(z)d\mu_h(\alpha).
\end{equation} 
Furthermore, every function defined by (\ref{inte}) is harmonic.
\end{theorem}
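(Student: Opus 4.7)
The strategy is to deduce Theorem~\ref{repres} directly from the Martin representation theorem \cite[Theorem 4]{kunitaWatanabe1965}, once the identification of the Martin compactification has been exploited. By Lemma~\ref{topoMartin} together with Corollary~\ref{construction}, the metric space $(M,\rho)$ is a compact completion of $\R_{+}^{2}$ in which $\iota(\R_{+}^{2})$ is dense, the boundary $\Gamma := M\setminus\iota(\R_{+}^{2})$ is homeomorphic to $[\alpha^{*},\alpha^{**}]$ via $\Phi:\alpha\mapsto h_{\alpha}(\cdot)/h_{\alpha}(0)$, and the Martin kernel separates points of $M$. This is precisely the Martin compactification in the sense of \cite{kunitaWatanabe1965,Pinsky_1995} (see Remark~\ref{rem:*}). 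Combined with the minimality of the Martin boundary asserted in Theorem~\ref{thm:Martin}, every $h_{\alpha}$ is a minimal harmonic function and the classical Martin--Kunita--Watanabe theorem provides, for each non-negative harmonic function $h$, a (unique) finite Radon measure $\mu_{h}$ on $[\alpha^{*},\alpha^{**}]$ for which \eqref{inte} holds. This gives the first half of the theorem.

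For the converse, fix any Radon measure $\mu$ on $[\alpha^{*},\alpha^{**}]$ and define $h(z):=\int_{[\alpha^{*},\alpha^{**}]} h_{\alpha}(z)\,d\mu(\alpha)$; since $[\alpha^{*},\alpha^{**}]$ is compact and $\alpha\mapsto h_{\alpha}(z)$ is continuous (Lemma~\ref{topoMartin} and the formulae of Theorem~\ref{thm:main}), $h$ is finite pointwise. For $t\geq 0$ and $z_{0}\in\R_{+}^{2}$, Tonelli's theorem (all integrands being non-negative) gives
\begin{equation*}
\E_{z_{0}}[h(Z_{t})] \;=\; \int_{[\alpha^{*},\alpha^{**}]}\E_{z_{0}}[h_{\alpha}(Z_{t})]\,d\mu(\alpha) \;=\; \int_{[\alpha^{*},\alpha^{**}]} h_{\alpha}(z_{0})\,d\mu(\alpha) \;=\; h(z_{0}),
\end{equation*}
where the middle equality uses the harmonicity of each $h_{\alpha}$ (established in Theorem~\ref{thm:main} via Section~\ref{subsec:5.1}). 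Hence $h$ verifies~\eqref{def:harm} and is harmonic.

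The main obstacle is verifying that the hypotheses of \cite[Theorem 4]{kunitaWatanabe1965} are met for this reflected degenerate process: one needs the Feller and strong Markov properties (supplied by Theorem~\ref{process}), the existence of Green's densities (obtained in Section~\ref{sec:2}), compactness of $M$ with density of $\iota(\R_{+}^{2})$ and separation by the Martin kernel (Corollary~\ref{construction}), and crucially the minimality of the Martin boundary (Theorem~\ref{thm:Martin}, proved in Section~\ref{8.2}). Without minimality the Kunita--Watanabe representation would only be guaranteed over the minimal boundary, a priori a proper subset of $[\alpha^{*},\alpha^{**}]$; so the full statement of Theorem~\ref{repres} genuinely hinges on the minimality part of Theorem~\ref{thm:Martin}.
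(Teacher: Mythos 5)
The main concern is a mis-assignment of which result depends on which. You assert that ``the full statement of Theorem~\ref{repres} genuinely hinges on the minimality part of Theorem~\ref{thm:Martin},'' but this would introduce a circularity: in the paper, Theorem~\ref{repres} is stated at the end of Section~\ref{sec:8.1}, \emph{before} minimality is established, and the proof of the minimality statement (Proposition~\ref{8.5} in Section~\ref{8.2}) explicitly \emph{uses} Theorem~\ref{repres} (``This directly implies that $\Gamma$ is minimal using Definition~\ref{def:minimal} and Theorem~\ref{repres}''). The resolution is that the Kunita--Watanabe representation theorem does not presuppose that the full Martin boundary is minimal: it produces a measure supported on the minimal boundary $\Gamma_1\subseteq\Gamma$, and since $\Gamma_1\subseteq[\alpha^*,\alpha^{**}]$ any such measure is automatically a Radon measure on $[\alpha^*,\alpha^{**}]$. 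So the existence asserted in Theorem~\ref{repres} follows from \cite[Theorem 4]{kunitaWatanabe1965} together with Corollary~\ref{construction} and Remark~\ref{rem:*} alone, with no appeal to minimality; only \emph{afterwards} does one learn that $\Gamma_1=\Gamma$.

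Apart from this, your approach is the same as the paper's, which simply derives the statement from \cite[Theorem 4]{kunitaWatanabe1965} once the Martin compactification has been identified. Your added Tonelli computation for the converse direction is a reasonable supplement the paper leaves implicit, but its justification of pointwise finiteness of $h$ via ``continuity of $\alpha\mapsto h_\alpha(z)$'' is not quite correct. The homeomorphism of Lemma~\ref{topoMartin} concerns the normalized kernels $h_\alpha(\cdot)/h_\alpha(0)$; the unnormalized map $\alpha\mapsto h_\alpha(z)$ is in general \emph{discontinuous} at $\alpha^*$ when $\alpha^*>0$ (one has $h_\alpha(z)\to+\infty$ as $\alpha\downarrow\alpha^*$, while $h_{\alpha^*}(z)$ given by \eqref{harm2} is finite), so boundedness on $[\alpha^*,\alpha^{**}]$ does not follow from compactness. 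The cleanest fix is to work with the Martin kernel $k(\cdot,\alpha)=h_\alpha(\cdot)/h_\alpha(0)$, which is continuous in $\alpha$ by Lemma~\ref{topoMartin}, and to absorb the normalizing factor $h_\alpha(0)$ into the measure.
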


\subsection{Minimality of\nr{Martin harmonic} functions \nb{$(h_\alpha)_{\alpha\in[\alpha^*,\alpha^{**}]}$} and Martin boundary}\label{8.2}
In this section, we prove that the Martin boundary is minimal. 
%
%
\begin{defi}[Minimal harmonic function]\label{def:minimal}
A non-negative harmonic function $h$ is said to be minimal if, for every pair of non-negative harmonic functions $f_1$ and $f_2$ satisfying $f_1 + f_2 = h$, both $f_1$ and $f_2$ are proportional to $h$.
\end{defi}

\begin{prop}[$\Gamma$ is minimal]\label{8.5}
The Martin boundary is minimal in the sense that if $\eta \in \Gamma$, then $k(\cdot, \eta)$ is minimal. \nb{In particular, the measure $\mu_h$ in representation \eqref{inte} is unique.}
\end{prop}
To prove this we state the following lemma.

\begin{lemma}\label{asymp_uniforme}
Let $\alpha^* < \alpha_1 < \alpha^{**}$ and $\epsilon > 0$. Then, there exist constants $\eta > 0$ and $r_0 >0$ such that
\begin{equation}
h_\alpha(r\cos(\alpha_1), r\sin(\alpha_1)) \geq \frac{1}{2}e^{r(x(\alpha_1)\cos(\alpha_1) + y(\alpha_1)\sin(\alpha_1) - \epsilon)}
\end{equation}
for all $r \geq r_0$ and $\alpha \in [\alpha_1-\eta, \alpha_1 + \eta]$.
\end{lemma}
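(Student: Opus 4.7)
The plan is to isolate the $m=0$ term of the series \eqref{harm1} defining $h_\alpha$ and control the remainder exponentially. Put $L_m(\alpha) := a_m(\alpha)\cos(\alpha_1) + b_m(\alpha)\sin(\alpha_1)$ where $(a_m(\alpha), b_m(\alpha))$ is built from the basepoint $(a_0, b_0)=(x(\alpha), y(\alpha))$ via Lemma~\ref{anbn}. Since $\kappa_0(\alpha)\equiv 1$, Theorem~\ref{thm:main} gives
\[ h_\alpha(r\cos(\alpha_1), r\sin(\alpha_1)) = e^{r L_0(\alpha)} + R(r,\alpha), \qquad R(r,\alpha) := \sum_{m\neq 0}\kappa_m(\alpha)\,e^{r L_m(\alpha)}, \]
so it suffices to show that $|R(r,\alpha)|\leq \tfrac12 e^{rL_0(\alpha)}$ for $r$ large and $\alpha$ close to $\alpha_1$, combined with a continuity estimate for $L_0$ that yields $e^{rL_0(\alpha)}\geq e^{r(L_0(\alpha_1)-\epsilon)}$.

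The key observation is that every point $(a_m(\alpha),b_m(\alpha))$ lies on the parabola $\mathcal{P}$ (since $\zeta,\eta$ preserve $\mathcal{P}$), and at $\alpha=\alpha_1$ the point $(x(\alpha_1),y(\alpha_1))$ is the \emph{unique} maximiser on $\mathcal{P}$ of the linear form $(x,y)\mapsto x\cos(\alpha_1)+y\sin(\alpha_1)$ by \eqref{xalpha}. Hence $L_m(\alpha_1)<L_0(\alpha_1)$ for every $m\neq 0$, and by the explicit formulae of Lemma~\ref{anbn} we have $a_m,b_m\sim -m^2/2$ as $|m|\to\infty$, so $L_m(\alpha_1)\to-\infty$ quadratically in $m$. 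Combining the uniform quadratic decay for large $|m|$ with the finite-$|m|$ separation yields a uniform gap $\sup_{m\neq 0} L_m(\alpha_1)\leq L_0(\alpha_1) - 2\delta$ for some intrinsic $\delta>0$; continuity of $\alpha\mapsto(a_m(\alpha),b_m(\alpha))$ then propagates this to $L_m(\alpha)\leq L_0(\alpha_1)-\delta$ for all $m\neq 0$ and $|\alpha-\alpha_1|\leq\eta_0$, provided $\eta_0$ is small enough.

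Because $\alpha_1\in(\alpha^*,\alpha^{**})$, the basepoint avoids the zeros of $\gamma_1,\gamma_2$ appearing in $\kappa_m$, so each $\kappa_m(\alpha)$ is continuous in $\alpha$ near $\alpha_1$. The convergence argument used in the proof of Theorem~\ref{harmoniques} (with $\prod_k G(s+2k)=O(n^{a-c+e-g})$ and $a-c+e-g<0$) together with $|\gamma_2(a_m,b_m)|\asymp m^2$ gives a polynomial bound $|\kappa_m(\alpha)|\leq C|m|^{-p}$ for some $p>0$, uniformly in $\alpha$ on $|\alpha-\alpha_1|\leq\eta_0$. Splitting the sum at some threshold $|m|=m_0$ and exploiting the quadratic decay of $L_m$ for $|m|>m_0$ (against which the polynomial growth of $\kappa_m$ is absorbed), one obtains $|R(r,\alpha)|\leq C'\,e^{r(L_0(\alpha_1)-\delta)}$ for all $r\geq 1$ and $|\alpha-\alpha_1|\leq\eta_0$, with $C'$ independent of $\alpha$.

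To conclude, choose $\eta\in(0,\eta_0]$ so small that $L_0(\alpha)\geq L_0(\alpha_1)-\min(\epsilon/2,\delta/2)$ for $|\alpha-\alpha_1|\leq\eta$ (possible by continuity of $\alpha\mapsto L_0(\alpha)$), and take $r_0$ with $C'e^{-r\delta/2}\leq 1/2$ for $r\geq r_0$. Then $|R(r,\alpha)|\leq \tfrac12 e^{rL_0(\alpha)}$, which yields $h_\alpha(r\cos(\alpha_1),r\sin(\alpha_1))\geq \tfrac12 e^{rL_0(\alpha)}\geq \tfrac12 e^{r(L_0(\alpha_1)-\epsilon)}$. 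The main obstacle is the joint uniformity of the bounds on $\kappa_m(\alpha)$ and $L_m(\alpha)$ in both $m$ and $\alpha$; this hinges on the quadratic blow-up of $|(a_m,b_m)|$ on $\mathcal{P}$, on the polynomial control of $\kappa_m$ coming from the compensation estimate, and on the fact that $\alpha_1$ is strictly interior to $(\alpha^*,\alpha^{**})$ so that no denominator of $\kappa_m(\alpha)$ degenerates near $\alpha_1$.
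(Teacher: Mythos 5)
Your proof is correct and follows the same route the paper gestures at: the paper's own proof is a single sentence pointing to the explicit series of Theorem~\ref{thm:main}, and your argument carries out that computation (isolating the $m=0$ term, using uniqueness of the maximiser on $\mathcal{P}$, the quadratic decay of $(a_m,b_m)$, and uniform polynomial control of $\kappa_m$ away from the poles). Filling in those details is a genuine improvement on the terse original but not a different strategy.
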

\begin{proof}
    This follows from Theorem~\ref{thm:main}, where explicit formulae of $h_\alpha$ are given. 
\end{proof}
\begin{proof}[Proof of Proposition \ref{8.5}]
Let $\alpha_0 \in [\alpha^*, \alpha^{**}]$. We aim to prove that if $h_{\alpha_0} = \int_{[\alpha^*, \alpha^{**}]} h_\alpha d\mu(\alpha)$ for some Radon measure $\mu$, then $\mu$ is the Dirac measure at $\alpha_0$. This directly implies the minimality of $\Gamma$ using Definition~\ref{def:minimal} and Theorem~\ref{repres}. It suffices to show that the support of $\mu$ is exactly $\{\alpha_0\}$. 
Suppose first that $\alpha^* < \alpha_0 <\alpha^{**}$. Let us prove that $\mu((\alpha^*,\alpha^{**})\backslash\{\alpha_0\}) = 0$.  Let $\alpha_1 \in (\alpha^*, \alpha^{**})\backslash\{\alpha_0\}$. \nr{Then}\nb{First}, by \eqref{xalpha}, we can choose $\epsilon > 0$ such that
\begin{equation}\label{8.9}
x(\alpha_1)\cos(\alpha_1) + y(\alpha_1)\sin(\alpha_1) - \epsilon > x(\alpha_0)\cos(\alpha_1) + y(\alpha_0)\sin(\alpha_1).
\end{equation}
\nr{By}\nb{Secondly, by} Lemma \ref{asymp_uniforme}, \nr{we obtain}\nb{there exists some} $\eta > 0$ such that 
$$h_\alpha(r\cos(\alpha), r\sin(\alpha)) \geq \frac{1}{2}e^{r(x(\alpha_1)\cos(\alpha_1) + y(\alpha_1)\sin(\alpha_1) - \epsilon)}$$
for $\alpha \in [\alpha_1-\eta, \alpha_1 + \eta]$ and $r\geq r_0$ large enough. Then,
$$h_{\alpha_0}(r\cos(\alpha_1), r\sin(\alpha_1)) \geq \int_{\alpha \in [\alpha_1-\eta, \alpha_1 + \eta]} h_{\alpha}(re_{\alpha_1})\mu(d\alpha)
 \underset{r\geq r_0}{\geq} \frac{\mu([\alpha_1-\eta, \alpha_1 + \eta])}{2}e^{r(x(\alpha_1)\cos(\alpha_1) + y(\alpha_1)\sin(\alpha_1) - \epsilon)}. $$
\nr{Combining with $\theta = \alpha_1$ and $\alpha = \alpha_0$ in (\ref{eqtheta}) with $\theta = \alpha_0$, we obtain from \eqref{8.9} that $\mu([\alpha_1-\eta, \alpha_1 + \eta]) = 0$ so the support of $\mu$ is included in $\{\alpha^*, \alpha_0, \alpha^{**}\}$.}
\nb{Considering $\theta = \alpha_1$ and $\alpha = \alpha_0$ in (\ref{eqtheta}), we obtain, for $r\geq r_1$ large enough:
$$e^{r(x(\alpha_1)\cos(\alpha_0) + y(\alpha_1)\sin(\alpha_0) - \epsilon)} \geq \frac{\mu([\alpha_1-\eta, \alpha_1 + \eta])}{2}e^{r(x(\alpha_1)\cos(\alpha_1) + y(\alpha_1)\sin(\alpha_1) - \epsilon)}. $$
By \eqref{8.9}, the asymptotics of the previous inequality as $r\to+\infty$ yield $\mu([\alpha_1-\eta, \alpha_1 + \eta]) = 0$.}
Therefore, $\mu$ can be written as $\mu = A\delta_{\alpha_0} + B\delta_{\alpha^*} + C\delta_{\alpha^{**}}$ for some non-negative constants $A, B,$ and $C$, i.e., $(1-A)h_{\alpha_0} = Bh_{\alpha^*} + C{h_{\alpha^{**}}}$. Now, considering the asymptotics (\ref{eqtheta}), (\ref{eqtheta2}), and (\ref{eqtheta3}), we immediately get $B = C = 0$ and $A = 1$. Hence, $\mu$ is the Dirac measure at $\alpha_0$ and $h_{\alpha_0}$ is minimal. The cases $\alpha_0 = \alpha^*$ and $\alpha_0 = \alpha^{**}$  are treated similarly. 
\end{proof}
\begin{proof}[Proof of Theorem~\ref{thm:Martin}] This is a direct consequence of Lemma~\ref{topoMartin}, Remark~\ref{rem:*} and Propostion~\ref{8.5}.
    
\end{proof}

\color{gray}

\color{black}

\section{From Assumption \eqref{simple} to the general case}\label{App:A}
We stated and proved Theorems \ref{thm1}, \ref{thm:main} and \ref{thm:Martin} under Assumption (\ref{simple}). In this section, we generalise these theorems without \nb{assuming} \eqref{simple}. To achieve this, we apply transformations to the $x-$axis, $y-$axis, and time $t$, in order to reduce the problem \nb{to} a process $\tilde Z$ which satisfies \eqref{simple}.
\begin{prop}[Space-time dilatation]\label{dilatation}
Let $(Z_t)_{t\geq0}$ be a degenerate reflected Brownian motion with parameters $$\Sigma = \begin{pmatrix}
\sigma_1^2 & -{\sigma_1\sigma_2} \\
-{\sigma_1\sigma_2} & \sigma_2^2
\end{pmatrix},
\mu = \begin{pmatrix}
\mu_1  \\
\mu_2
\end{pmatrix},
R = \begin{pmatrix}
1 & r_{2} \\
r_{1} & 1
\end{pmatrix}.
$$
Then the process $$(\tilde Z_t)_{t\geq0} := \left(\left(\frac{\mu_1}{{\sigma_1}}+\frac{\mu_2}{{\sigma_2}}\right)\begin{pmatrix}
\frac{1}{{\sigma_1}} & 0 \\
0 & \frac{1}{{\sigma_2}} 
\end{pmatrix} Z\left(\frac{t}{\left(\frac{\mu_1}{{\sigma_1}}+\frac{\mu_2}{{\sigma_2}}\right)^2}\right)
\right)_{t\geq0}$$ is a degenerate reflected Brownian motion \nr{of}\nb{with} parameters 
$$\begin{pmatrix}
1 & -1 \\
-1 & 1
\end{pmatrix}, \tilde\mu:=\frac{1}{\left(\frac{\mu_1}{{\sigma_1}}+\frac{\mu_2}{{\sigma_2}}\right)}
\begin{pmatrix}
\frac{\mu_1}{{\sigma_1}}  \\
\frac{\mu_2}{{\sigma_2}}
\end{pmatrix},
\tilde R := \begin{pmatrix}
1 & r_{2}{\frac{\sigma_2}{\sigma_1}} \\
r_{1}{\frac{\sigma_1}{\sigma_2}} & 1
\end{pmatrix}
.
$$
Furthermore, $\tilde Z$ satisfies \eqref{drift} to \eqref{simple}.
\end{prop}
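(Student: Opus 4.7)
The plan is to verify directly that $\tilde Z$ fits Definition~\ref{defi} with the announced parameters. Writing $Z_t = X_t + RL_t$ with $X_t - \mu t$ a driftless degenerate Brownian motion of covariance $\Sigma$, set $c := \mu_1/\sigma_1 + \mu_2/\sigma_2$ and $D := \mathrm{diag}(1/\sigma_1, 1/\sigma_2)$ so that $\tilde Z_t = cD\, Z(t/c^2)$. Then I decompose
\begin{equation*}
\tilde Z_t = \underbrace{cD X(t/c^2)}_{=: \tilde X_t} + \underbrace{cDR\, L(t/c^2)}_{\text{to be rewritten as }\tilde R \tilde L_t}.
\end{equation*}
Since $D$ is diagonal with positive entries, $\tilde Z_t \in \R_+^2$ iff $Z_t \in \R_+^2$, and $\tilde Z^i_t = 0$ iff $Z^i(t/c^2) = 0$, so the quadrant and its axes are preserved.

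The first step is the covariance and drift of $\tilde X$. A direct computation gives
\begin{equation*}
D\Sigma D^{\top} = \begin{pmatrix} 1 & -1 \\ -1 & 1 \end{pmatrix},
\end{equation*}
and by Brownian scaling, $cD\,(X(t/c^2) - \mu\, t/c^2)$ is a driftless degenerate Brownian motion whose covariance is $c^2 (D\Sigma D^{\top})/c^2$, as required. Therefore $\tilde X_t - \tilde\mu\, t$ is driftless of covariance $\begin{pmatrix} 1 & -1 \\ -1 & 1 \end{pmatrix}$ with
\begin{equation*}
\tilde\mu = \frac{1}{c}\,D\mu = \frac{1}{\mu_1/\sigma_1 + \mu_2/\sigma_2}\begin{pmatrix} \mu_1/\sigma_1 \\ \mu_2/\sigma_2 \end{pmatrix},
\end{equation*}
matching the statement. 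In particular $\tilde\mu_1, \tilde\mu_2 > 0$ and $\tilde\mu_1 + \tilde\mu_2 = 1$.

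Next I identify the reflection. Setting $\tilde L^i_t := (c/\sigma_i)\,L^i(t/c^2)$, a two-line computation shows
\begin{equation*}
cDR\,L(t/c^2) = \tilde R\, \tilde L_t \quad \text{with}\quad \tilde R = \begin{pmatrix} 1 & r_2 \sigma_2/\sigma_1 \\ r_1\sigma_1/\sigma_2 & 1 \end{pmatrix},
\end{equation*}
which is exactly the claimed matrix. Because $L^i$ is continuous, non-decreasing, starts from $0$, and increases only on $\{Z^i = 0\}$, the same properties hold for $\tilde L^i$ (as $(c/\sigma_i) > 0$), and its support is contained in $\{\tilde Z^i = 0\}$ by the observation above. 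This completes the verification of Definition~\ref{defi} for $\tilde Z$.

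Finally, I check the three assumptions for $(\tilde\Sigma, \tilde\mu, \tilde R)$. Assumption \eqref{simple} holds since the covariance is $\begin{pmatrix}1&-1\\-1&1\end{pmatrix}$ (so $\tilde\sigma_1 = \tilde\sigma_2 = 1$) and $\tilde\mu_1 + \tilde\mu_2 = 1$. Assumption \eqref{drift} is immediate. Assumption \eqref{vectors}, which for the new data reads $\tilde r_1 > -1$ and $\tilde r_2 > -1$, is equivalent to $r_1\sigma_1/\sigma_2 > -1$ and $r_2\sigma_2/\sigma_1 > -1$, i.e.\ precisely \eqref{vectors} for the original parameters. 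No real obstacle is expected; the only point requiring slight care is bookkeeping of the time change in the Brownian scaling and ensuring that $\tilde L$ is the correct linear combination of the original local times so that $cDR = \tilde R \cdot \mathrm{diag}(c/\sigma_1, c/\sigma_2)$.
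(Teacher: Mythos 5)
Your proof is correct and follows exactly the approach the paper gestures at (applying the space-time change to the semimartingale decomposition $Z_t = X_t + RL_t$ of Definition~\ref{defi}); the paper's proof is a one-liner, while you spell out the covariance computation $D\Sigma D^{\top}$, the Brownian scaling, the identification $\tilde L^i_t = (c/\sigma_i) L^i(t/c^2)$, and the verification of \eqref{drift}--\eqref{simple}, all of which check out.
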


\begin{proof}
    This is a direct consequence of Definition~\ref{defi} applying the corresponding transformation to \eqref{semimart}.
\end{proof}

\begin{theorem}[\nr{Martin h}\nb{H}armonic functions and Martin boundary: general case]
Suppose \nb{that} (\ref{drift}) and (\ref{vectors}). Let $\alpha^*, \alpha^{**}$ and $(\tilde h_\alpha)_{\alpha\in[\alpha^*, \alpha^{**}]}$ be the angles and the\nr{Martin} harmonic functions in Theorem \ref{thm:main} for the degenerate reflected Brownian motion of parameters $\left(\begin{pmatrix}
1 & -1 \\
-1 & 1
\end{pmatrix}, \tilde\mu,
\tilde R
\right)
$ (\nr{with}\nb{using the} notation of Proposition~\ref{dilatation}). Then, the family of \nr{Martin}\nb{minimal} harmonic functions for the initial process is given by 
\begin{equation}
(x_0, y_0)\longmapsto \tilde h_\alpha\left(\left(\frac{\mu_1}{{\sigma_1}}+\frac{\mu_2}{{\sigma_2}}\right)\left(\frac{x_0}{{\sigma_1}},\frac{y_0}{{\sigma_2}}\right)\right), \quad \quad \alpha\in[\alpha^*, \alpha^{**}].
\end{equation}
Furthermore, the Martin boundary remains homeomorphic to $[\alpha^*, \alpha^{**}]$ and is minimal.
\end{theorem}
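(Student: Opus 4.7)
The plan is to reduce the statement to Theorems~\ref{thm:main} and~\ref{thm:Martin} (both established under Assumption~\eqref{simple}) by pulling back via the linear space-time transformation of Proposition~\ref{dilatation}. Set $T(x,y) = c\,(x/\sigma_1, y/\sigma_2)$ with $c = \mu_1/\sigma_1 + \mu_2/\sigma_2$, so that $\tilde Z_t = T(Z_{t/c^2})$ is a DRBM whose parameters satisfy~\eqref{simple}. The first step is to observe that $T$ is a linear bijection of $\R_+^2$ and that a function $h\colon\R_+^2\to\R$ is harmonic for $Z$ if and only if $h\circ T^{-1}$ is harmonic for $\tilde Z$. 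This follows directly from the defining martingale property~\eqref{def:harm}: by Proposition~\ref{dilatation}, for any $z_0\in\R_+^2$,
\begin{equation}
\E_{z_0}\bigl[(h\circ T^{-1})(\tilde Z_t)\bigr] = \E_{z_0}\bigl[h(Z_{t/c^2})\bigr] = h(z_0) = (h\circ T^{-1})(T(z_0)),
\end{equation}
so harmonicity passes through $T$. Consequently, the family $\bigl(\tilde h_\alpha\circ T\bigr)_{\alpha\in[\alpha^*,\alpha^{**}]}$ is a family of harmonic functions for $Z$, which is precisely the family displayed in the theorem.

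Next I would check that this family exhausts all minimal positive harmonic functions for $Z$, i.e.\ that it parameterises the minimal Martin boundary. For this, write the Green's measures of $Z$ in terms of those of $\tilde Z$: by the change of variables $Z_t = T^{-1}(\tilde Z_{c^2 t})$, a direct computation gives
\begin{equation}
g^{z_0}_Z(z) = c^2\,|\det T|\, g^{T(z_0)}_{\tilde Z}(T(z))
\end{equation}
(up to the Jacobian coming from the change of variable in the density identity~\eqref{mes_green} and the time rescaling). The key observation is that the Martin kernel $k_Z(z_0,z) = g^{z_0}_Z(z)/g^{(0,0)}_Z(z)$ is a \emph{ratio}, so the prefactors cancel and
\begin{equation}
k_Z(z_0,z) \;=\; k_{\tilde Z}\bigl(T(z_0),\,T(z)\bigr).
\end{equation}
Thus the map $T$ induces a homeomorphism between the Martin compactifications of $Z$ and $\tilde Z$: a sequence $(z_n)$ is Martin-convergent for $Z$ if and only if $(T(z_n))$ is Martin-convergent for $\tilde Z$, with identical limiting kernels modulo the change of variable.

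From this identification, Theorem~\ref{thm:Martin} applied to $\tilde Z$ gives that the Martin boundary of $\tilde Z$ is homeomorphic to $[\alpha^*,\alpha^{**}]$ via $\alpha\mapsto\tilde h_\alpha(\cdot)/\tilde h_\alpha(0)$, and that each $\tilde h_\alpha$ is a minimal positive harmonic function. Pulling back through $T$, the Martin boundary of $Z$ is homeomorphic to $[\alpha^*,\alpha^{**}]$ via $\alpha\mapsto (\tilde h_\alpha\circ T)(\cdot)/(\tilde h_\alpha\circ T)(0)$, which is the displayed family in the theorem. Minimality in the sense of Definition~\ref{def:minimal} is preserved under the bijection $h\mapsto h\circ T^{-1}$ between harmonic cones, since the partial order given by decomposition $h = f_1+f_2$ with $f_1,f_2\geq 0$ harmonic is invariant under this pullback.

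The main (and essentially only) point requiring care is the computation transporting Green's densities and local-time Green's measures under $T$, so that the Martin kernel identity holds cleanly; the transformation of the reflection matrix $R\mapsto \tilde R$ must be tracked to ensure the boundary local-time components are rescaled consistently, but this is exactly what Proposition~\ref{dilatation} delivers. Once this bookkeeping is done, both assertions of the theorem follow directly from the $\tilde Z$-version already proved in Sections~\ref{sec:5}--\ref{sec:9}.
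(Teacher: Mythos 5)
Your proposal is correct and follows the same approach as the paper: both reduce to the normalized model via the space-time dilatation of Proposition~\ref{dilatation} and transport the Green's functions through the linear map $T=\psi$. The paper's proof stops after deriving the Green's function relation \eqref{lien_green} and states that the conclusion follows; you usefully make explicit the two steps it elides, namely that the Martin kernel identity $k_Z(z_0,z)=k_{\tilde Z}(T(z_0),T(z))$ holds because the multiplicative constant (your prefactor, whose precise value is immaterial and in fact should be $|\det T|/c^2=1/(\sigma_1\sigma_2)$ rather than $c^2|\det T|$) cancels in the ratio, and that harmonicity and minimality pull back under~$T$.
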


\begin{proof}
Let $\psi : \R^2 \longmapsto \R^2$ be the map defined by $\psi(z) = \left(\frac{\mu_1}{{\sigma_1}}+\frac{\mu_2}{{\sigma_2}}\right)\begin{pmatrix}
\frac{1}{{\sigma_1}} & 0 \\
0 & \frac{1}{{\sigma_2}} 
\end{pmatrix}
 z$.
We denote by $G(z_0, \cdot)$ (resp $\tilde G(\tilde z_0, \cdot)$) the Green's measure associated with  $(Z_t)_{t\geq0}$ (resp. with $(\tilde Z_t)_{t\geq0}$) and $g^{z_0}(z)$ (resp. $\tilde g^{\tilde z_0}(\tilde z))$ the corresponding Green's functions, where $\tilde z_0 = \psi(z_0)$. Note that 
\begin{align*}
G(z_0, A)&= \int_0^{+\infty}\P_{z_0}(Z_t\in A)dt \\
&= \int_0^{+\infty}\P_{z_0}\left(Z\left(\frac{u}{\left(\frac{\mu_1}{{\sigma_1}}+\frac{\mu_2}{{\sigma_2}}\right)^2}\right)\in A\right)\frac{du}{\left(\frac{\mu_1}{{\sigma_1}}+\frac{\mu_2}{{\sigma_2}}\right)^2}\\
&=\int_0^{+\infty}\P_{\tilde z_0}\left(\tilde Z_u\in \psi(A)\right)\frac{du}{\left(\frac{\mu_1}{{\sigma_1}}+\frac{\mu_2}{{\sigma_2}}\right)^2}.
\end{align*}
Furthermore, $$\P_{z_0}\left(\tilde Z_u\in \psi(A)\right) = \int_{\psi(A)}\P_{\tilde z_0}\left(\tilde Z_u = u \right)du = \int_{A}\P_{\tilde z_0}\left(\tilde Z_u = \psi(v) \right)|Jac(\psi)|dv.$$
Therefore, the following holds for all $z_0, a \in \R_+^2$:
\begin{equation}
g^{z_0}(a) = \frac{1}{{\sigma_1\sigma_2}}\tilde g^{\tilde z_0}(\psi(a)).
\end{equation}
Then, \begin{align}\label{lien_green}
g^{z_0}(r\cos(\alpha), r\sin(\alpha))
&=\frac{1}{{\sigma_1\sigma_2}}\tilde g^{\psi(z_0)}\left(\tilde r\cos(\tilde\alpha), \tilde r\sin(\tilde\alpha)\right)
\end{align} 
with $\tilde\alpha = \arctan\left({\frac{\sigma_2}{\sigma_1}} \tan(\alpha)\right)$.
The conclusion follows from relation \eqref{lien_green}.
\end{proof}
\color{black}
\newpage
\section*{Acknowledgments}
I would like to extend my gratitude to Sandro Franceschi and Irina Kourkova for their invaluable insights and discussions about this article. First and foremost, I sincerely thank Sandro Franceschi for introducing me to the compensation method and for the many fruitful discussions we have had. His guidance has been crucial in shaping this research, and our conversations have greatly deepened my understanding of the subject. I am also deeply grateful to Irina Kourkova for her numerous valuable suggestions regarding the writing of this article, as well as for our exchanges on a specific case related to asymptotic analysis.

\maketitle

\end{document}